\newtheorem{theorem}{Theorem}[section]
\newtheorem{lemma}[theorem]{Lemma}
\newtheorem{proposition}[theorem]{Proposition}
\newtheorem{corollary}[theorem]{Corollary}
\newtheorem{definition}[theorem]{Definition}
\newtheorem{remark}[theorem]{Remark}
\newtheorem{example}[theorem]{Example}
\author{Jacek
Jakubowski}
\address{
Institute of Mathematics, University
of Warsaw \\
  Banacha 2, 02-097 Warszawa, Poland \\
e-mail: {\tt jakub@mimuw.edu.pl } }
\author{ Maciej Wi\'sniewolski}
\address{Institute of Mathematics, University of Warsaw,  Banacha 2 \\
 02-097 Warszawa, Poland;\\ e-mail: {\tt  wisniewolski@mimuw.edu.pl }  }
\title[On matching diffusions, Laplace transforms and partial differential equations]
 {On matching diffusions, Laplace transforms and partial differential equations} 
\begin{document}

\maketitle

\begin{center}
First version  March 9, 2014.
\end{center}
\begin{center}
 This version \today.
\end{center}

\begin{abstract}
We present the idea of intertwining of two diffusions by  Feynman-Kac operators.
We present some variations and implications of the method and
give examples of its applications. Among others, it turns out to be a very useful tool for finding the expectations of some functionals of diffusions, especially for computing the Laplace transforms  of stochastic processes.
The examples give new results on marginal distributions of many stochastic processes including a generalized squared Bessel processes and joint distribution for squared Bessel bridge and its integral - the close formulae of the Laplace transforms are presented. We finally present a general version of the method and its applications to PDE of the second order. A new dependence between diffusions and solutions of hyperbolic partial differential equations is presented. In particular, the version of Feynman-Kac representation for hyperbolic PDE is given.  It is presented, among others, the simple form of Laplace transform of wave equation with axial symmetry.
\end{abstract}

\noindent
\begin{quote}
 \noindent  \textbf{Key words}:
 Brownian motion, squared Bessel process, Laplace transform, diffusion, Feynman - Kac theorem, partial differential equations

\textbf{AMS Subject Classification}: 60H30, 60H35, 60J60,60J70

\end{quote}

\section{Introduction} The idea presented in this paper concerns a new look on the problem of intertwining of two Markov processes. For an introduction to this topic see Carmona, Petit and Yor \cite{CPY}, Biane \cite{BI} or Holley and Stroock \cite{HS}. The methods in \cite{BI} and \cite{CPY} are based on  finding  a Markov kernel $\Lambda$ such that $P_t\Lambda = \Lambda \hat{P}_t$, where $P_t, \hat{P}_t$ are two given Markov semi-groups. In this paper instead of finding a Markov kernel $\Lambda$, we place the problem on building a relation between two Feynman-Kac operators. We find the diffusion $\hat{X}$ with generator $\hat{A}$ and, possibly on a different probability space, a diffusion $X$ and generator $A$ such that for some function $V$ we have $P_t\hat{A}f = AP_tf - Vf$.
This results in obtaining the new probabilistic representations of solutions of hyperbolic PDE's. It also  provides a new technique of computing the expectations of some functionals of diffusions, in particular, it enables to calculate the Laplace transforms for a broad class of diffusions.
We can implement this idea, due to the fact
that to a certain diffusion  we can assign another diffusion with
 the known distribution and for these two diffusions the
expectations of properly chosen  functionals are equal.
This procedure is called here the matching diffusions technique and it is in fact another look on the intertwining of diffusions. In our previous work \cite{jw} some elements of this idea has appeared in a context of
  hyperbolic Bessel processes, however, we have discovered that it can be extended to the efficient computational method.

In  Section 2 we present theorem  which allows to obtain the closed
forms of Laplace transforms for many diffusions (Theorem \ref{tw:Lap}). It gives the new results as well as the short proofs of known results, for example that of Faraud and Goutte \cite{FG} (see Corollary \ref{wn-FG}).
In this section we compute the Laplace transforms
of generalized squared Bessel processes.  We generalize the results of Pitman and Yor
\cite{PY} and Donati-Martin and Yor \cite{DYM} where results were obtained by
using the additive property (see also Chapter XI in \cite{RY} to review the standard methodology for squared Bessel processes and Chapter 6 in \cite{JYC}). Our way of finding the Laplace transform of generalized squared Bessel processes relies on finding an unique solution of some linear PDE of the first order. After proving uniqueness of that solution,  we effectively obtain close formulae for  solutions using the characteristic method. This works well for a generalized Bessel processes including an radial Ornstein-Uhlenbeck process.
For a generalized Bessel processes for which the matching diffusion method fails to work since the assumption of belonging to   $\mathcal{FK}$-class is violating, we present modification allowing to obtain the required  results.

In  Section 3 we extend the matching diffusion technique to find $\mathbb{E}e^{-\lambda f(\hat{X}_t)}$ for some function $f$.
We apply that to a diffusion $\hat{X}$ with coefficients satisfying some simple relation to obtain that  we can assign a function $f$ such that $f(\hat{X})$ is associated to a geometric Brownian motion.
This new result corresponds to Zvonikin's observation and Doss-Sussmann method, for details see   \cite[Section 27 Chapter 5]{RW}.
Next, the method of finding the joint Laplace transform of vector $(f(\hat{X}_t),\int_0^tf(\hat{X}_u)du)$ for certain functions $f$ and the general squared Bessel processes $\hat{X}$ is developed.
Using this general results we  find close formula for a squared Bessel bridge and a radial Ornstein-Uhlenbeck process. All these results are new (compare with Laplace transforms of some Brownian functionals given in \cite{JYC} and \cite{BS}).

In Section 4 we present  the matching diffusion technique for a general function $f$.
It results in new probabilistic representation
for solutions of the extended class of second order PDE's.
 A version of Feynman-Kac representation for PDE of hyperbolic type is the crucial result presented here (Theorem \ref{FKhyp}). It gives a new light for connections between some diffusions and bounded, or of polynomial growth, solutions of partial differential equations of the second order. Example \ref{ex4.7} illustrates the efficiency of this theorem.
As another example illustrating the use of the matching diffusion technique we present the method of finding the  simple form of Laplace transform of a solution of the wave equation with axial symmetry (compare to \cite{P} and \cite{PZ}).
At the end, using  the matching diffusion technique, we obtain some new results on Jacobi diffusions for which
  only few closed formulae for functionals have been obtained so far (see \cite{HK} and \cite{BSII}).

\section{Matching Diffusion for Laplace transform}

All filtrations considered in this paper satisfy usual conditions.
By $B$ and $W$ we usually denote the standard Brownian motions possibly on different probability spaces. 
In the first step we consider a classical diffusion $X$ with coefficients $(\sigma,\mu)$, i.e.
 $X$ is a solution to SDE
\begin{equation} \label{eq:SDE}
d X_u = \sigma(u,X_u)dW_u + \mu(u,X_u)du,  \quad X_0=x,
 \end{equation}
and has the  generator of the form
\begin{align*}A_t = \frac12 \sigma(t,\cdot)\frac{\partial^2}{\partial x^2} + \mu(t,\cdot)\frac{\partial}{\partial x}.\end{align*}
\begin{definition}
For given functions $V$, $g$ and $h$ we say that
a pair $(\sigma,\mu)$ is in a Feynman-Kac class (abrr. $\mathcal{FK}$-class) if SDE \eqref{eq:SDE} has,  for all $x$, a weak solution
 $X$ unique in law, and  the Cauchy problem
\begin{align*}
	\frac{\partial f}{\partial t} = A_tf - Vf + g, \quad f(0,x) = h(x),
\end{align*}
has a unique classical solution $f\in\mathcal{C}^{1,2}$ with the probabilistic representation
\begin{equation*} \label{repr} f(t,x) = \mathbb{E}h(X_t)e^{-\int_0^tV(t-u,X_u)du} +\mathbb{E}\int_0^tg(t-s, X_s)e^{- \int_0^sV(t-u,  X_u)du}ds.
\end{equation*}
\end{definition}
The main technical tool to check whether $(\sigma,\mu)$ is in $\mathcal{FK}$-class is the Feynman-Kac theorem (for details see e.g. \cite[Section 5.7]{KS} and Remark \ref{FKcons}).
In the main theorem of this section we present that the Laplace transform of a diffusion $\hat{X}$ can be written
in terms of another well matching diffusion. This approach allows to obtain the closed
forms of Laplace transform for many diffusions.
\begin{theorem} \label{tw:Lap}
  Let $(\hat{X},B)$ and $(X,W)$ be two unique weak solutions, possibly on two different probability spaces $(\Omega,\mathcal{F},\mathbb{P})$ and $(\widetilde{\Omega}, \widetilde{\mathcal{F}}, \widetilde{\mathbb{P}})$, to the following SDEs
\begin{align} \label{tjX-hat}
	\hat{X}_t &= x + \int_0^t\hat{\sigma}(u,\hat{X_u})dB_u + \int_0^t\hat{\mu}(u,\hat{X}_u)du\\
\label{tjX}
	X_t &= \lambda + \int_0^t\sigma(u,X_u)dW_u + \int_0^t\mu(u,X_u)du,
\end{align}
where $x\geq 0, \lambda\geq 0$.
Assume that $\hat{X}, X$  are nonnegative and for all $t\geq0$ we have
\begin{align}\label{martprop}
	\mathbb{E}\int_0^t\hat{\sigma}^2(u,\hat{X_u})du < \infty, \quad \mathbb{E}\int_0^t|\hat{\mu}(u,\hat{X_u})|du < \infty.
\end{align}
Let $g:[0,\infty)\times[0,\infty)\rightarrow\mathbb{R}$ and $ V:[0,\infty)\times[0,\infty)\rightarrow[0,\infty)$ be  continuous functions. \\
Let $h(\lambda)= e^{-x\lambda}, \lambda \geq 0$. Assume that, for such functions $g, V, h$, the pair $(\sigma,\mu)$
is in $\mathcal{FK}$-class and that for any $t\geq 0$ and $\lambda \geq0$
\begin{equation}\label{con1}
\begin{split}
 & \mathbb{E}e^{-\lambda \hat{X}_t}\Big[\frac{\lambda^2}{2}\hat{\sigma}^2(t,\hat{X}_t) -\lambda\hat{\mu}(t,\hat{X}_t) \Big] \\ & = \mathbb{E}e^{-\lambda \hat{X}_t}\Big[\hat{X}_t^2\frac{\sigma^2(t,\lambda)}{2}-\hat{X}_t\mu(t,\lambda) - V(t,\lambda)\Big] + g(t,\lambda).
\end{split}
\end{equation}
Then for all $t\geq0$ and $\lambda \geq 0$
\begin{equation}\label{con2}
	\mathbb{E}e^{-\lambda \hat{X}_t} = \widetilde{\mathbb{E}} e^{-x X_t - \int_0^t V(t-u, X_u)du} + \widetilde{\mathbb{E}}\int_0^tg(t-s, X_s)e^{- \int_0^sV(t-u,  X_u)du}ds ,
\end{equation}
where $\widetilde{\mathbb{E}}$ corresponds to $\widetilde{\mathbb{P}}.$
\end{theorem}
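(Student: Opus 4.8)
The plan is to define the function $F(t,\lambda) := \mathbb{E}e^{-\lambda\hat{X}_t}$ and show that it solves the Cauchy problem
\[
\frac{\partial F}{\partial t} = A_t F - VF + g, \qquad F(0,\lambda) = e^{-x\lambda},
\]
where $A_t = \tfrac12\sigma^2(t,\cdot)\partial_{\lambda\lambda} + \mu(t,\cdot)\partial_\lambda$ is the generator of the matching diffusion $X$. Once this is established, the hypothesis that $(\sigma,\mu)$ is in the $\mathcal{FK}$-class (for the data $g$, $V$, and $h(\lambda)=e^{-x\lambda}$) immediately gives the probabilistic representation \eqref{con2}, since that representation is precisely the content of membership in the $\mathcal{FK}$-class applied to the unique classical solution, and $F$ will be shown to be that solution.

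So the first main step is to compute $\partial_t F$. Applying It\^o's formula to $e^{-\lambda\hat{X}_t}$ using \eqref{tjX-hat}, the integrability conditions \eqref{martprop} ensure the stochastic integral term is a true martingale, so taking expectations gives
\[
\frac{\partial}{\partial t}\,\mathbb{E}e^{-\lambda\hat{X}_t}
= \mathbb{E}e^{-\lambda\hat{X}_t}\Big[\tfrac{\lambda^2}{2}\hat{\sigma}^2(t,\hat{X}_t) - \lambda\hat{\mu}(t,\hat{X}_t)\Big].
\]
This is exactly the left-hand side of the standing assumption \eqref{con1}. The second step is to recognize the right-hand side of \eqref{con1} as $A_t F - VF + g$ evaluated at $(t,\lambda)$: indeed, differentiating $F(t,\lambda)=\mathbb{E}e^{-\lambda\hat{X}_t}$ in $\lambda$ brings down factors of $-\hat{X}_t$, so $\partial_\lambda F = -\mathbb{E}\hat{X}_t e^{-\lambda\hat{X}_t}$ and $\partial_{\lambda\lambda}F = \mathbb{E}\hat{X}_t^2 e^{-\lambda\hat{X}_t}$; hence
\[
A_t F(t,\lambda) - V(t,\lambda)F(t,\lambda)
= \mathbb{E}e^{-\lambda\hat{X}_t}\Big[\hat{X}_t^2\tfrac{\sigma^2(t,\lambda)}{2} - \hat{X}_t\mu(t,\lambda) - V(t,\lambda)\Big].
\]
Combining the two displays with \eqref{con1} yields $\partial_t F = A_t F - VF + g$, and $F(0,\lambda) = \mathbb{E}e^{-\lambda x} = e^{-\lambda x} = h(\lambda)$ gives the initial condition.

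The final step is to invoke uniqueness: the $\mathcal{FK}$-class hypothesis guarantees the Cauchy problem has a \emph{unique} classical $\mathcal{C}^{1,2}$ solution, and that solution admits the representation on the right of \eqref{con2}. It therefore only remains to check that $F$ is itself a legitimate classical solution — i.e. that $F \in \mathcal{C}^{1,2}$ and that the differentiations under the expectation sign used above are justified. The regularity in $\lambda$ is automatic from dominated convergence (the map $\lambda\mapsto e^{-\lambda y}$ and its $\lambda$-derivatives are bounded on $[0,\infty)$ uniformly for $\lambda$ in compacts, using nonnegativity of $\hat{X}_t$), and the $C^1$-regularity in $t$ follows from the It\^o computation together with continuity of $V$, $g$ and of the coefficients along the paths. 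The main obstacle I expect is precisely this regularity/justification-of-interchange bookkeeping: verifying that $F$ is smooth enough to qualify as the classical solution, and that the dominated-convergence and martingale arguments go through under only the stated moment hypotheses \eqref{martprop} — the algebraic heart of the proof, the identification of the two sides of \eqref{con1} with the two sides of the PDE, is essentially immediate once the It\^o formula is in hand.
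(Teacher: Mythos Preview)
Your proposal is correct and follows essentially the same route as the paper's proof: apply It\^o's formula to $e^{-\lambda\hat X_t}$, use \eqref{martprop} to pass to expectations, invoke \eqref{con1} to identify the resulting expression with $A_tF - VF + g$, and then appeal to the $\mathcal{FK}$-class hypothesis for the representation. If anything, your write-up is more careful than the paper's, which simply asserts $p\in\mathcal C^{1,2}$ without the dominated-convergence discussion you supply.
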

\begin{proof} Define $p(t,\lambda) := \widetilde{\mathbb{E}}e^{-\lambda \hat{X}_t}$. From It\^o's lemma we have
\begin{equation}\label{eq:ItoLap}
	de^{-\lambda \hat{X}_t} = -\lambda e^{-\lambda \hat{X}_t}\Big(\hat{\sigma}(t,\hat{X_t})dB_t + \hat{\mu}(t,\hat{X}_t)dt\Big) +\frac{\lambda^2}{2}e^{-\lambda \hat{X}_t}\hat{\sigma}^2(t,\hat{X_t})dt.
\end{equation}
 By  (\ref{martprop}), the local martingale in (\ref{eq:ItoLap}) is a true martingale, so taking expectation we obtain
\begin{equation*}
	\mathbb{E}e^{-\lambda \hat{X}_t} = e^{-\lambda x} + \mathbb{E}\int_0^te^{-\lambda \hat{X}_u}\Big(\frac{\lambda^2}{2}\hat{\sigma}^2(u,\hat{X_u})-\lambda\hat{\mu}(u,\hat{X}_u)\Big)du.
\end{equation*}
Now, using (\ref{con1}) and the observation that $p\in C^{1,2}((0,\infty)\times(0,\infty))$ we infer that
\begin{align*}
	\frac{\partial p}{\partial t} = \mathcal{A}_t p - Vp + g, \quad p(0,\lambda) = e^{-x\lambda},	
\end{align*}
where $\mathcal{A}_t$ is the generator of the diffusion $X$. Since the pair $(\sigma,\mu)$ is in $\mathcal{FK}$-class  we have
\begin{equation*}
	p(t,\lambda) =  \widetilde{\mathbb{E}} e^{-x X_t - \int_0^t V(t-u, X_u)du} + \widetilde{\mathbb{E}}\int_0^tg(t-s, X_s)e^{- \int_0^sV(t-u,  X_u)du}ds
\end{equation*}
and the result follows from the definition of the function $p$.
\end{proof}

In what follows,
we abuse notation using $\mathbb{P}$ and $\mathbb{E}$ instead of $\widetilde{\mathbb{P}}$ and $\widetilde{\mathbb{E}}$, since it is clear from context which probability measure
is used.
\begin{remark} \label{FKcons} Belonging of $(\sigma,\mu)$ to $\mathcal{FK}$-class is the crucial point in the proof of Theorem \ref{tw:Lap}. Usually, it is checked by application of the Feynman-Kac theorem.
In particular, to have an appropriate set of assumptions ensuring that a version of Theorem \ref{tw:Lap} for terminal condition holds, we can follow Karatzas and Shreve (see section 5.7 in \cite{KS}).
The conditions given in \cite{KS} are not necessary to have the Feynman-Kac representation - they can be relaxed.
If the coefficients $\mu, \sigma$ do not depend on time, then the change from terminal to initial condition in the Cauchy problem is
immediate. If the coefficients depend on time,
 the generator of diffusion, inducing by reverting of time
 in the Cauchy problem, changes. Thus some additional assumptions have to be added, see e.g. the discussion in \cite[p. 139-150]{F} or \cite{PE} for the case of Levy processes.
\end{remark}
In the next part of  paper we present extensions and applications of Theorem \ref{tw:Lap}. From now, we
assume that $g\equiv 0$. To illustrate how the matching diffusion method works, we give, as a simple corollary, the short proof of
result obtained in \cite[Proposition 2.4]{jw}.
\begin{corollary}\label{HBP} \cite{jw}
For any $\lambda > 0$ and $t\geq 0$ we have
\begin{equation*}
	\mathbb{E}e^{-\lambda \cosh(B_t)} = \mathbb{E}e^{-\lambda e^{B_t} - \frac{\lambda^2}{2}\int_0^te^{2B_u}du}.
\end{equation*}
\end{corollary}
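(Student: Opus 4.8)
The plan is to apply Theorem \ref{tw:Lap} with a well-chosen matching pair, so I first need to identify the data $(\hat\sigma,\hat\mu)$, $(\sigma,\mu)$, $V$ and $g$. On the left-hand side the process is $\hat X_t=\cosh(B_t)$; by It\^o's formula, $d\cosh(B_t)=\sinh(B_t)\,dB_t+\tfrac12\cosh(B_t)\,dt$, and since $\sinh^2=\cosh^2-1$ we may write $\hat X$ as a diffusion with $\hat\sigma(t,y)=\sqrt{y^2-1}$ and $\hat\mu(t,y)=\tfrac12 y$ (started at $\hat X_0=\cosh 0=1$, so $x=1$). On the right-hand side the candidate matching diffusion is the geometric Brownian motion $X_t=\lambda e^{W_t}$, which solves $dX_t=X_t\,dW_t$, i.e.\ $\sigma(t,\ell)=\ell$ and $\mu\equiv 0$, with $X_0=\lambda$. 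The remaining guess, read off from the exponent on the right, is $V(t,\ell)=\tfrac{\ell^2}{2}$ and $g\equiv 0$.

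Next I would verify the hypotheses of Theorem \ref{tw:Lap} for this data. The martingale conditions \eqref{martprop} hold because $\cosh(B_t)$ has moments of all orders uniformly on compacts, so $\mathbb{E}\int_0^t(\hat X_u^2-1)\,du<\infty$ and $\mathbb{E}\int_0^t\tfrac12\hat X_u\,du<\infty$. Nonnegativity of $\hat X=\cosh(B)\ge 1$ and of $X=\lambda e^W>0$ is clear. The key algebraic check is the matching identity \eqref{con1}: with $h(\ell)=e^{-x\ell}=e^{-\ell}$, one must confirm that
\begin{align*}
\mathbb{E}e^{-\lambda\hat X_t}\Big[\tfrac{\lambda^2}{2}(\hat X_t^2-1)-\lambda\cdot\tfrac12\hat X_t\Big]
=\mathbb{E}e^{-\lambda\hat X_t}\Big[\hat X_t^2\cdot\tfrac{\ell^2}{2}\Big|_{\ell=\lambda}-\hat X_t\cdot 0-\tfrac{\lambda^2}{2}\Big],
\end{align*}
and indeed the right side equals $\mathbb{E}e^{-\lambda\hat X_t}\big[\tfrac{\lambda^2}{2}\hat X_t^2-\tfrac{\lambda^2}{2}\big]$, which matches the left side after expanding $\tfrac{\lambda^2}{2}(\hat X_t^2-1)=\tfrac{\lambda^2}{2}\hat X_t^2-\tfrac{\lambda^2}{2}$ and noting the extra term $-\tfrac{\lambda}{2}\hat X_t e^{-\lambda\hat X_t}$; so I must be slightly more careful about whether that first-order term cancels. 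Tracking it: the left bracket is $\tfrac{\lambda^2}{2}\hat X_t^2-\tfrac{\lambda^2}{2}-\tfrac{\lambda}{2}\hat X_t$, while the right bracket with $\mu\equiv0$ is $\tfrac{\lambda^2}{2}\hat X_t^2-\tfrac{\lambda^2}{2}$ — these differ by $\tfrac{\lambda}{2}\hat X_t$. This forces a correction: one should instead take $\hat\mu(t,y)=\tfrac12 y$ absorbed differently, or choose $\sigma,\mu$ for $X$ so that $-\hat X_t\mu(t,\lambda)$ supplies the missing $-\tfrac\lambda2\hat X_t$; i.e.\ take $\mu(t,\ell)=\tfrac12$ (a drift), so $X$ solves $dX_t=X_t\,dW_t+\tfrac12\,dt$. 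Re-examining the target identity, the right side then reads $\tfrac{\lambda^2}{2}\hat X_t^2-\tfrac12\hat X_t-\tfrac{\lambda^2}{2}$, which indeed equals the left bracket; hence the correct matching diffusion is $dX_t=X_t\,dW_t+\tfrac12\,dt$, $X_0=\lambda$, with $V(t,\ell)=\tfrac{\ell^2}{2}$.

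The last ingredient is membership of $(\sigma,\mu)=(\ell,\tfrac12)$ in the $\mathcal{FK}$-class for $h(\ell)=e^{-\ell}$, $V(t,\ell)=\tfrac{\ell^2}{2}$, $g\equiv0$: the SDE $dX_t=X_t\,dW_t+\tfrac12\,dt$ has a unique strong solution (linear coefficients), and the associated Cauchy problem $\partial_t f=\tfrac12\ell^2\partial_{\ell\ell}f+\tfrac12\partial_\ell f-\tfrac{\ell^2}{2}f$, $f(0,\ell)=e^{-\ell}$, has the Feynman-Kac representation by the standard theorem (cf.\ Remark \ref{FKcons} and \cite[Section 5.7]{KS}), since $V\ge0$ and the exponential initial datum together with the positivity and integrability of the Feynman-Kac functional give the required regularity and growth bounds. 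Granting this, Theorem \ref{tw:Lap} yields
\begin{align*}
\mathbb{E}e^{-\lambda\cosh(B_t)}=\mathbb{E}\,e^{-x X_t-\int_0^t V(t-u,X_u)\,du}
=\mathbb{E}\,e^{-X_t-\tfrac12\int_0^t X_u^2\,du},
\end{align*}
and it remains to represent $X_t$ explicitly. The hard part — and the place where one must be most careful — is solving $dX_t=X_t\,dW_t+\tfrac12\,dt$: by It\^o's formula $d(e^{-W_t+t/2}X_t)=e^{-W_t+t/2}\cdot\tfrac12\,dt$ after the cross terms cancel, wait — more directly, one checks that $X_t=e^{W_t}\big(\lambda+\int_0^t\tfrac12 e^{-W_u}\,du\big)$ is not of the desired closed form, so instead I would reconcile by re-deriving the correct SDE for $X$ from scratch once the sign bookkeeping in \eqref{con1} is settled; the statement to be matched is $\mathbb{E}e^{-\lambda e^{B_t}-\tfrac{\lambda^2}{2}\int_0^t e^{2B_u}\,du}$, which is exactly $\mathbb{E}e^{-x X_t-\int_0^t V(t-u,X_u)\,du}$ with $x=1$, $X_t=\lambda e^{B_t}$, $V(s,\ell)=\tfrac{\ell^2}{2}$, so in fact the matching diffusion is simply $X_t=\lambda e^{W_t}$ solving $dX_t=X_t\,dW_t+\tfrac12 X_t\,dt$ — i.e.\ $\sigma(t,\ell)=\ell$, $\mu(t,\ell)=\tfrac12\ell$ — and then the left bracket $\tfrac{\lambda^2}{2}\hat X_t^2-\tfrac{\lambda^2}{2}-\tfrac\lambda2\hat X_t$ must equal $\tfrac{\lambda^2}{2}\hat X_t^2-\tfrac\lambda2\hat X_t-\tfrac{\lambda^2}{2}$, which it does identically. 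Thus the clean choice is $X_t=\lambda e^{W_t}$, $V(s,\ell)=\tfrac{\ell^2}{2}$, $g\equiv0$, $x=1$; plugging into \eqref{con2} and substituting $\int_0^t V(t-u,X_u)\,du=\tfrac12\int_0^t\lambda^2 e^{2W_u}\,du$ gives exactly $\mathbb{E}e^{-\lambda e^{W_t}-\tfrac{\lambda^2}{2}\int_0^t e^{2W_u}\,du}$, proving the corollary after renaming $W$ as $B$.
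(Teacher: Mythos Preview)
Your final choice of matching data---$\hat X_t=\cosh(B_t)$ with $\hat\sigma^2(y)=y^2-1$, $\hat\mu(y)=\tfrac12 y$, $x=1$; $X_t=\lambda e^{W_t}$ with $\sigma(\ell)=\ell$, $\mu(\ell)=\tfrac12\ell$; $V(\ell)=\tfrac{\ell^2}{2}$, $g\equiv0$---is exactly the paper's, and the verification of \eqref{con1} you give at the end is correct. The write-up, however, must be cleaned of its false starts: the initial claim that $\lambda e^{W_t}$ solves $dX_t=X_t\,dW_t$ with $\mu\equiv0$ is wrong (It\^o's formula forces the $\tfrac12 X_t\,dt$ drift), and the intermediate attempt with constant drift $\mu(t,\ell)=\tfrac12$ does \emph{not} satisfy \eqref{con1} either---it yields $-\tfrac12\hat X_t$ on the right, not the needed $-\tfrac{\lambda}{2}\hat X_t$, so your sentence ``which indeed equals the left bracket'' there is incorrect; you only recovered because the resulting SDE failed to reproduce the target formula. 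Rewrite the argument linearly with the correct $\mu(\ell)=\tfrac12\ell$ from the outset and it coincides with the paper's proof.
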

\begin{proof} Let us take $\hat{X}_t = \cosh{B_t}$ and $X_t = \lambda e^{B_t}$. Both are
 diffusions satisfying assumptions of Theorem \ref{tw:Lap} with
\begin{align*}
	\hat{\sigma}^2(z) &= z^2 - 1, \ \hat{\mu}(z) = \frac12 z, \ \hat{X}_0 = 1,\\
	\sigma^2(z) &= z^2, \ \mu(z) = \frac12 z, \ X_0 = \lambda.
\end{align*}
Taking $V(t,\lambda) := \frac{\lambda^2}{2}$ we can easily check that conditions \eqref{martprop} and (\ref{con1})   hold. Using  the Feynman-Kac theorem, we see that  $(\sigma,\mu)$ is in $\mathcal{FK}$-class.
Thus the result follows from Theorem \ref{tw:Lap}.
\end{proof}
In what follows, we consider Laplace transform of  generalized squared Bessel processes II. It
is the class of processes consisting of  unique nonnegative solutions
of the SDE's of the form
\begin{equation}\label{GBPII}
  	\hat{X}_t = x + 2\int_0^t\sqrt{\hat{X_u}}dB_u + \int_0^t(\Delta(u) + 2\theta(u)\hat{X_u})du,
\end{equation}
where  $x\geq 0$, $\Delta$ is a nonnegative  continuous  function and $\theta$ is a continuous function (see \cite{FG}).
This notion generalizes the class of processes defined in \cite{DYM} with $\theta\equiv 0$, hence the name.
It  is the class of processes for which the matching diffusions technique can be applied. Before giving the form of Laplace transform of a general squared Bessel process   we  present some auxiliary  results giving iff conditions for belonging to $\mathcal{FK}$-class,
under the assumption that $\sigma\equiv 0$.
\begin{proposition}\label{CCPI}
Let  $\sigma\equiv 0$ and $\mu:[0,\infty)\times [0,\infty)\rightarrow \mathbb{R}$, $V:[0,\infty)\times [0,\infty)\rightarrow [0,\infty)$ be  continuous functions. Suppose that for any
$x\geq 0$ there exists  a positive unique solution on $[0,\infty)\times [0,\infty)$
of the Cauchy problem
\begin{equation}\label{CPI}
	\frac{\partial p}{\partial t} = \mu(t,\lambda)\frac{\partial p}{\partial \lambda} -V(t,\lambda) p, \quad  p(0,\lambda) = e^{-x\lambda}.
\end{equation}
Then the pair $(\sigma, \mu)$ is
in $\mathcal{FK}$-class if and only if for all $t,\lambda\geq 0$
\begin{align}\label{techcon1}
\mu(t,y(t,\lambda)) &= \mu(t,\lambda)\frac{\partial y (t,\lambda)}{\partial \lambda}, \\
\label{techcon1b} V(t,\lambda) &= \nabla_{t,\lambda}\Big(\int_0^tV(t-u,y(u,\lambda))du\Big),
 \end{align}
 where
\begin{align}\label{techcon2}
y(t,\lambda) &= \lambda + \int_0^t\mu(u,y(u,\lambda))du, \quad  \nabla_{t,\lambda} = \frac{\partial}{\partial t} - \mu(t,\lambda)\frac{\partial}{\partial \lambda}.
\end{align}
\end{proposition}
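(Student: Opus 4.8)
The plan is to trivialize the diffusion, reduce $\mathcal{FK}$-membership to a single identity for an explicit candidate function, and then separate that identity into the two stated conditions. Since $\sigma\equiv 0$, the SDE \eqref{eq:SDE} degenerates into the ODE $X_t=\lambda+\int_0^t\mu(u,X_u)\,du$, whose solution started from $\lambda$ is the characteristic curve $y(\cdot,\lambda)$ of \eqref{techcon2}; in particular $X$ is deterministic, so with $g\equiv 0$ and $h(\lambda)=e^{-x\lambda}$ the probabilistic representation in the definition of $\mathcal{FK}$-class collapses to
\[
q(t,\lambda):=e^{-x\,y(t,\lambda)-\int_0^t V(t-u,y(u,\lambda))\,du},
\]
and $q(0,\lambda)=e^{-x\lambda}$ holds automatically because $y(0,\lambda)=\lambda$. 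By the assumed existence and uniqueness of the solution of \eqref{CPI} for each $x\ge 0$, the statement that $(\sigma,\mu)$ belongs to $\mathcal{FK}$-class (understood, as the hypothesis suggests, for every exponential datum $h(\lambda)=e^{-x\lambda}$, $x\ge 0$) is therefore equivalent to the statement that $q$ solves \eqref{CPI} for every $x\ge 0$.

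The core computation is then a chain-rule differentiation. Write $q=e^{-Y}$ with $Y(t,\lambda)=x\,y(t,\lambda)+\int_0^t V(t-u,y(u,\lambda))\,du$; since $q>0$, the equation $\partial_t q=\mu(t,\lambda)\partial_\lambda q-V q$ is equivalent to $\nabla_{t,\lambda}Y=V$. By linearity, $\nabla_{t,\lambda}Y=x\,\nabla_{t,\lambda}y+\nabla_{t,\lambda}\big(\int_0^t V(t-u,y(u,\lambda))\,du\big)$, and from $\partial_t y(t,\lambda)=\mu(t,y(t,\lambda))$ one obtains $\nabla_{t,\lambda}y(t,\lambda)=\mu(t,y(t,\lambda))-\mu(t,\lambda)\,\partial_\lambda y(t,\lambda)$. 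Hence ``$q$ solves \eqref{CPI} for all $x\ge 0$'' is the affine-in-$x$ identity
\[
x\Big(\mu(t,y(t,\lambda))-\mu(t,\lambda)\,\partial_\lambda y(t,\lambda)\Big)+\nabla_{t,\lambda}\Big(\int_0^t V(t-u,y(u,\lambda))\,du\Big)=V(t,\lambda)
\]
for all $t,\lambda\ge 0$. Setting $x=0$ yields \eqref{techcon1b}, and subtracting this and taking any $x>0$ yields \eqref{techcon1}. Conversely, if \eqref{techcon1} and \eqref{techcon1b} hold then the bracket multiplying $x$ vanishes and the remaining term equals $V$, so $\nabla_{t,\lambda}Y=V$, i.e.\ $q$ solves \eqref{CPI}; by uniqueness $q$ is the classical solution, which coincides with its probabilistic representation, so $(\sigma,\mu)\in\mathcal{FK}$-class. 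This gives both implications.

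The step I expect to be the main obstacle is the regularity bookkeeping that makes the above differentiation legitimate. One needs $y(t,\lambda)$ to be $C^1$ in $(t,\lambda)$: continuity of $\mu$ already gives $\partial_t y=\mu(t,y)$ via the fundamental theorem of calculus, but the existence of $\partial_\lambda y$ used in \eqref{techcon1}, and the differentiability in $t$ of $t\mapsto\int_0^t V(t-u,y(u,\lambda))\,du$ used in \eqref{techcon1b}, are precisely the smoothness that the very appearance of these expressions (and of $\nabla_{t,\lambda}$ acting on them) presupposes; in the ``if'' direction this regularity must be assumed or imposed so that $q$ is a genuine $C^1$ solution, while in the ``only if'' direction it comes for free, since $q$ equals the classical solution $p$ of \eqref{CPI}. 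A secondary point, needed to verify the remaining clause in the definition of $\mathcal{FK}$-class, is that for $\sigma\equiv 0$ weak uniqueness in law for \eqref{eq:SDE} reduces to uniqueness of the characteristic ODE, which is guaranteed by the hypothesis that \eqref{CPI} has a unique solution obtained via the method of characteristics.
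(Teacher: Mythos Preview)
Your proposal is correct and follows essentially the same route as the paper: identify the deterministic characteristic $y(t,\lambda)=X_t$, write the candidate representation $q=e^{-xy-\int_0^tV(t-u,y(u,\lambda))du}$, plug it into \eqref{CPI}, and separate the resulting identity in the free parameter $x$ to obtain \eqref{techcon1} and \eqref{techcon1b}. Your discussion of the regularity caveats (smoothness of $y$ in $\lambda$, differentiability of the $V$-integral, and ODE uniqueness for the degenerate SDE) is more explicit than the paper's, which simply takes these for granted.
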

\begin{proof}
Necessity.
 As $(0, \mu)$ is
in $\mathcal{FK}$-class,
there exists a unique
 $X$ satisfying
\begin{equation} \label{ja1}
X_t = \lambda + \int_0^t\mu(u,X_u)du
\end{equation}
and
\begin{equation} \label{ja2}
p(t,\lambda) = e^{-xX_t -\int_0^t V(t-u,X_u)du }
\end{equation}
 is the unique solution of (\ref{CPI}).
 Hence, defining  $y(t,\lambda) = X_t$, we obtain
\begin{align*}
	-&\frac{\partial}{\partial t}\Big(xy(t,\lambda) + \int_0^tV(t-u,y(u,\lambda))du \Big)\\ &= -\mu(t,\lambda)\frac{\partial}{\partial \lambda}\Big(xy(t,\lambda) + \int_0^tV(t-u,y(u,\lambda))du \Big) - V(t,\lambda)
\end{align*}
Since this equality holds for all $x$ and  $\frac{\partial }{\partial t}y(t,\lambda) = \mu(t,y(t,\lambda))$, by \eqref{ja1},  we obtain \eqref{techcon1} and \eqref{techcon1b}.
Sufficiency. Let $p$ be given by
$p(t,\lambda) = e^{-xy(t,\lambda) -\int_0^tV(t-u,y(u,\lambda))du}$,
where $y$ solves the equation $y(t,\lambda) = \lambda + \int_0^t\mu(u,y(u,\lambda))du$. Conditions (\ref{techcon1})  and \eqref{techcon1b} imply that $p$ is a solution of (\ref{CPI}). From assumption it is unique. Hence $(\sigma, \mu)$, where $\sigma\equiv 0$, is
in $\mathcal{FK}$-class.
\end{proof}
We need also the following  lemma on solutions of  the Cauchy problem for $\sigma\equiv 0$.
\begin{lemma}\label{unique} Let $a,b,d$ be continuous functions on $[0,\infty)$ and $a,b$ be non-positive and such that $a^2(t) + b^2(t) >0$  for every $t\geq 0$. Let $V$ be a continuous nonnegative function on $[0,\infty)\times[0,\infty)$. There exists at most one solution of the following
 problem on $[0,\infty)\times[0,\infty)$
\begin{equation}\label{cauchyy}
	\frac{\partial p}{\partial t} = \Big(d(t) + a(t)\lambda^2+b(t)\lambda\Big)\frac{\partial p}{\partial \lambda}- V(t,\lambda) p, \quad  p(0,\lambda) = h(\lambda),
\end{equation}
where $h$ is a nonnegative continuous function.
\end{lemma}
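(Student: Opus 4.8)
The plan is to prove uniqueness by the method of characteristics combined with a Gronwall-type estimate along the characteristic curves. Suppose $p_1$ and $p_2$ are two solutions of \eqref{cauchyy} on $[0,\infty)\times[0,\infty)$, and set $w = p_1 - p_2$. Then $w$ satisfies the same linear first-order PDE with zero initial data:
\[
	\frac{\partial w}{\partial t} = \bigl(d(t) + a(t)\lambda^2 + b(t)\lambda\bigr)\frac{\partial w}{\partial \lambda} - V(t,\lambda) w, \qquad w(0,\lambda) = 0.
\]
The point of the sign hypotheses on $a,b$ is to guarantee that the characteristic curves, which solve the ODE $\dot{\lambda}(t) = -\bigl(d(t) + a(t)\lambda(t)^2 + b(t)\lambda(t)\bigr)$, stay inside the closed quadrant $[0,\infty)\times[0,\infty)$ — more precisely, that the characteristic issuing forward in time from any point $(0,\lambda_0)$ with $\lambda_0\ge 0$ remains nonnegative for all $t\ge 0$, so that $w$ is actually being evaluated on its domain of definition along the whole curve.

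First I would fix $\lambda_0 \ge 0$ and introduce the characteristic $s\mapsto \xi(s;\lambda_0)$ solving $\xi'(s) = -\bigl(d(s) + a(s)\xi(s)^2 + b(s)\xi(s)\bigr)$ with $\xi(0) = \lambda_0$, and check it does not leave $[0,\infty)$: when $\xi$ hits $0$ one has $\xi'(s) = -d(s)$, and combined with the non-positivity of $a,b$ and the condition $a^2+b^2>0$ (which prevents the quadratic term from degenerating and keeps the drift from pushing $\xi$ out of the quadrant), a standard comparison argument confines the trajectory; one should also argue non-explosion, again using $a\le 0$ so that the quadratic term in the ODE has the ``good'' sign. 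Next, define $\varphi(s) := w\bigl(s,\xi(s;\lambda_0)\bigr)$. By the chain rule and the PDE,
\[
	\varphi'(s) = w_t + \xi'(s)\, w_\lambda = w_t - \bigl(d+a\xi^2+b\xi\bigr)w_\lambda = -V\bigl(s,\xi(s)\bigr)\,\varphi(s),
\]
so $\varphi$ solves a linear homogeneous ODE with $\varphi(0) = w(0,\lambda_0) = 0$, whence $\varphi\equiv 0$, i.e. $w\bigl(t,\xi(t;\lambda_0)\bigr) = 0$ for all $t\ge 0$. Since $\lambda_0\ge 0$ was arbitrary and the forward characteristics sweep out all of $[0,\infty)\times[0,\infty)$ — every point $(t,\lambda)$ lies on the characteristic through its starting value $\lambda_0 = \xi$ at time $0$, obtained by running the characteristic ODE backward from $(t,\lambda)$, which again stays nonnegative by the same sign considerations — we conclude $w\equiv 0$, hence $p_1 = p_2$.

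I expect the main obstacle to be the confinement-and-non-explosion argument for the characteristics: verifying rigorously that, under exactly the hypotheses $a,b\le 0$, $a^2+b^2>0$, with $d$ merely continuous and of arbitrary sign, every forward characteristic starting in $[0,\infty)$ stays in $[0,\infty)$ and exists for all time. One has to handle the boundary $\lambda=0$ carefully — if $d(s)>0$ there the characteristic is pushed to negative $\lambda$, so the claim as I have sketched it presumably needs either that such excursions are still fine because the relevant solutions $p_i$ are defined there, or a slightly different bookkeeping of which characteristics cover the quadrant; I would reconcile this with the precise domain conventions in the statement before committing to the exact confinement claim. Once the characteristics are under control, the Gronwall/ODE step is completely routine, since $V\ge 0$ and continuous makes $\varphi\equiv 0$ immediate from $\varphi(0)=0$.
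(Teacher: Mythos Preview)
Your route via the method of characteristics is genuinely different from the paper's. The paper argues by an energy estimate: with $w=(p-q)^2\ge 0$ and $u(t,\lambda)=\int_0^\lambda w(t,z)\,dz$, an integration by parts together with the sign conditions $a,b\le 0$ yields, for every $\lambda$ large enough that $d(t)+a(t)\lambda^2+b(t)\lambda\le 0$ on $[0,T]$, the differential inequality $\partial_t u\le -(2a(t)\lambda+b(t))\,u$; Gronwall then gives $u(t,\lambda)=0$ for all such $\lambda$, and since $w\ge 0$ this forces $w\equiv 0$. No individual characteristic curve is ever traced, and the role of $a^2+b^2>0$ is only to guarantee the existence of the threshold $\lambda_0$.

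The confinement issue you flag is not a technicality to be cleaned up later; it is a genuine obstruction. To conclude $w(t_0,\lambda_0)=0$ by your method you must follow the \emph{backward} characteristic $\zeta'(r)=d(t_0-r)+a(t_0-r)\zeta(r)^2+b(t_0-r)\zeta(r)$ from $(t_0,\lambda_0)$ down to $t=0$ without leaving $\lambda\ge 0$; at $\zeta=0$ one has $\zeta'=d$, so whenever $d<0$ the curve exits through the lateral boundary, where no data are prescribed. The hypothesis $a^2+b^2>0$ plays no role at $\lambda=0$ and cannot rescue this. In fact uniqueness fails outright for such $d$: with $a\equiv 0$, $b\equiv -1$, $d\equiv -1$, $V\equiv 0$, the function $p(t,\lambda)=\bigl((1-(\lambda+1)e^{-t})^{+}\bigr)^{3}$ is a nonzero $C^2$ solution of \eqref{cauchyy} on $[0,\infty)^2$ with $p(0,\lambda)=0$ for $\lambda\ge 0$. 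Your argument \emph{does} go through once $d\ge 0$ --- then backward characteristics are repelled from $\lambda=0$ and $a\le 0$ prevents blow-up --- and this covers every application the paper makes of the lemma. For what it is worth, the paper's own integration-by-parts step silently discards the boundary term $-d(t)\,w(t,0)$, which is an inequality in the required direction only when $d\ge 0$; so the soft spot lies in the statement, not in your method.
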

\begin{proof} Let $t\leq T$. Assume that $p,q$ are solutions of (\ref{cauchyy}) and define $w = (p-q)^2$.
We have $w(0,\lambda)= 0$ for every $\lambda$, and
\begin{align*} \frac{\partial w}{\partial t}
&= \Big(d(t) + a(t)\lambda^2+b(t)\lambda\Big)\frac{\partial w}{\partial \lambda}- 2V(t,\lambda)w \\
&\leq \Big(d(t) + a(t)\lambda^2+b(t)\lambda\Big)\frac{\partial w}{\partial \lambda}.
\end{align*}
Let $u(t,\lambda) = \int_0^{\lambda}w(t,z)dz$.
The last inequality implies, for $\lambda \geq 0$, that
\begin{align*}
\frac{\partial u}{\partial t} &\leq \Big(d(t) + a(t)\lambda^2+b(t)\lambda\Big)w - \int_0^{\lambda}(2a(t)z + b(t))w(t,z)dz\\
&\leq \Big(d(t) + a(t)\lambda^2+b(t)\lambda\Big)w - (2a(t)\lambda + b(t))u,
\end{align*}
where we used integration by parts. Choose $\lambda_0$ such that for $t\leq T$ and $\lambda>\lambda_0$
it holds
$\Big(d(t) + a(t)\lambda^2+b(t)\lambda\Big)\leq 0$.
Thus, for any $\lambda>\lambda_0$,
\begin{align*}
\frac{\partial u}{\partial t} &\leq - (2a(t)\lambda + b(t))u.
\end{align*}
From Gronwall's lemma we obtain $u(t,\lambda) = 0$.  The assertion follows, as $T$ is arbitrary.
\end{proof}

The important example of the generalized squared Bessel process is
  a $\delta$-dimensional ($\delta \geq 0$) squared radial Ornstein-Uhlenbeck process with parameter $\alpha\in\mathbb{R}$, which is  the unique solution of the SDE
\begin{equation}\label{ROU}
	\hat{X}_t = x + 2\int_0^t\sqrt{\hat{X}_s}dB_s + \int_0^t(\delta -2\alpha \hat{X}_s)ds.
\end{equation}
As an application of Theorem \ref{tw:Lap} we find the  Laplace transform of squared radial Ornstein-Uhlenbeck process. The transition density function of radial Ornstein-Uhlenbeck process is given in Salminen and Borodin   \cite{BS} (see Apendix 1 Section 26), however the form of the Laplace transform hasn't been presented there.
\begin{proposition}(Squared Radial Ornstein-Uhlenbeck) \label{SROU} The Laplace transform of $\delta$-dimensional squared radial Ornstein-Uhlenbeck with parameter $\alpha\geq 0$ 
has the form
\begin{equation*}
	\mathbb{E}e^{-\lambda \hat{X}_t} = \exp\Big\{ -\frac{\lambda\delta}{2} - \frac{\alpha(x- \delta/2)}{(1+\alpha/\lambda)e^{2t\alpha}-1}\Big\}.
\end{equation*}
\end{proposition}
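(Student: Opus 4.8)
The plan is to apply Theorem~\ref{tw:Lap} with $\hat X$ the squared radial Ornstein--Uhlenbeck process from \eqref{ROU}, so that $\hat\sigma^2(u,z)=4z$ and $\hat\mu(u,z)=\delta-2\alpha z$. To match this we need a diffusion $X$ started at $\lambda$ whose coefficients $(\sigma,\mu)$, together with a potential $V$, satisfy the matching identity \eqref{con1}. Since here $\hat\sigma^2$ is linear and $\hat\mu$ is affine in $z$, the left-hand side of \eqref{con1} is $\mathbb E e^{-\lambda\hat X_t}\big[2\lambda^2\hat X_t-\lambda\delta+2\alpha\lambda\hat X_t\big]$, and a natural guess that makes the $\hat X_t$-terms cancel is to look for $X$ with $\sigma\equiv 0$ and $\mu$ quadratic in $\lambda$: writing $\mu(t,\lambda)=2\lambda^2+2\alpha\lambda$ kills the coefficient of $\hat X_t$, leaving $V(t,\lambda)=\tfrac{\lambda\delta}{2}$ (and $g\equiv 0$, consistent with the standing assumption). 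First I would verify \eqref{martprop} for the squared radial OU process (standard, since it has affine coefficients and finite moments) and record that with these choices \eqref{con1} holds identically.

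Next I would establish that $(\sigma,\mu)=(0,\,2\lambda^2+2\alpha\lambda)$ lies in the $\mathcal{FK}$-class for $V(t,\lambda)=\tfrac{\lambda\delta}{2}$ and $h(\lambda)=e^{-x\lambda}$. Here Lemma~\ref{unique} applies directly: the drift $d(t)+a(t)\lambda^2+b(t)\lambda$ with $d\equiv 0$, $a\equiv -2$, $b\equiv -2\alpha$ is of exactly the required form with $a,b$ non-positive (using $\alpha\geq 0$) and $a^2+b^2>0$, and $V,h$ are nonnegative and continuous, so the Cauchy problem \eqref{CPI} has at most one solution. To get existence with a probabilistic representation I would use Proposition~\ref{CCPI}: I must solve the ODE $y(t,\lambda)=\lambda+\int_0^t(2y^2+2\alpha y)\,du$, i.e. $y'=2y^2+2\alpha y$ with $y(0)=\lambda$, which is a Bernoulli/Riccati equation integrable in closed form, giving
\begin{equation*}
y(t,\lambda)=\frac{\alpha}{(1+\alpha/\lambda)e^{-2\alpha t}-1},
\end{equation*}
and then check the compatibility conditions \eqref{techcon1}--\eqref{techcon1b}; condition \eqref{techcon1} is automatic because $y$ is the flow of the autonomous vector field $\mu$, and \eqref{techcon1b} becomes an identity for $V(t,\lambda)=\tfrac{\lambda\delta}{2}$ that reduces to checking $\nabla_{t,\lambda}\int_0^t \tfrac{\delta}{2}y(u,\lambda)\,du=\tfrac{\delta}{2}\lambda$, which follows from the semigroup property $y(t-u,y(u,\lambda))=y(t,\lambda)$ of the flow together with $\tfrac{\partial}{\partial t}y(t,\lambda)=\mu(t,y(t,\lambda))$ and the chain rule. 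This confirms membership in the $\mathcal{FK}$-class.

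With the hypotheses of Theorem~\ref{tw:Lap} in place, conclusion \eqref{con2} (with $g\equiv 0$) reads
\begin{equation*}
\mathbb E e^{-\lambda\hat X_t}=\mathbb E\exp\Big\{-x\,X_t-\int_0^t V(t-u,X_u)\,du\Big\},
\end{equation*}
where $X$ is the \emph{deterministic} flow $X_t=y(t,\lambda)$ (since $\sigma\equiv 0$). Hence the right-hand side is simply $\exp\{-x\,y(t,\lambda)-\tfrac{\delta}{2}\int_0^t y(u,\lambda)\,du\}$, and it remains to evaluate $\int_0^t y(u,\lambda)\,du$ in closed form. Using $y'=2y^2+2\alpha y$ one has $y=\tfrac{1}{2}\tfrac{d}{du}\log\!\big(y/(y+\alpha)\big)\cdot(\text{const})$; more directly, $\int_0^t y\,du=\tfrac{1}{2\alpha}\int_0^t\big(\tfrac{y'}{y}-\tfrac{y'}{y+\alpha}\big)\,du=\tfrac{1}{2\alpha}\log\frac{y(t,\lambda)(\lambda+\alpha)}{\lambda\,(y(t,\lambda)+\alpha)}$, which after inserting the closed form of $y$ simplifies to $\tfrac{1}{\alpha}\log\big((1+\alpha/\lambda)e^{2\alpha t}\big)^{1/2}$-type expression; collecting terms and recognizing that $x\,y(t,\lambda)-\tfrac{\delta}{4\alpha}\cdot(\dots)$ combines into $\tfrac{\lambda\delta}{2}+\tfrac{\alpha(x-\delta/2)}{(1+\alpha/\lambda)e^{2\alpha t}-1}$ yields the stated formula. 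The main obstacle I anticipate is purely bookkeeping: solving the Riccati ODE and then carrying out the $\int_0^t y\,du$ integration and the algebraic regrouping so that the answer collapses exactly to $\exp\{-\tfrac{\lambda\delta}{2}-\tfrac{\alpha(x-\delta/2)}{(1+\alpha/\lambda)e^{2t\alpha}-1}\}$; there is also a minor point of care about the degenerate case $\alpha=0$ (recovering the classical squared Bessel Laplace transform by taking a limit) and about the sign conventions ensuring $y$ stays positive and the flow is defined on all of $[0,\infty)\times[0,\infty)$, which is where the hypothesis $\alpha\geq 0$ is used.
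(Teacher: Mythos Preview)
Your overall strategy is exactly the paper's: take $\sigma\equiv 0$, choose $\mu$ and $V$ so that \eqref{con1} holds, invoke Lemma~\ref{unique} and Proposition~\ref{CCPI} to place $(0,\mu)$ in the $\mathcal{FK}$-class, solve the resulting ODE for the deterministic flow $X_t=y(t,\lambda)$, and substitute into \eqref{con2}. However, two concrete errors derail the execution.

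\textbf{Sign of $\mu$.} In \eqref{con1} the right-hand side (with $\sigma\equiv 0$, $g\equiv 0$) is $\mathbb{E}e^{-\lambda\hat X_t}\big[-\hat X_t\,\mu(t,\lambda)-V(t,\lambda)\big]$, so matching the $\hat X_t$-coefficient $2\lambda^2+2\alpha\lambda$ on the left forces $\mu(t,\lambda)=-2\lambda(\lambda+\alpha)$, not $+2\lambda^2+2\alpha\lambda$. This is not cosmetic: with your sign the flow $y'=2y^2+2\alpha y$, $y(0)=\lambda>0$, blows up in finite time, so $X_t$ is not defined on $[0,\infty)$ and Proposition~\ref{CCPI} cannot even be applied. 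Indeed, your own appeal to Lemma~\ref{unique} with $a\equiv -2$, $b\equiv -2\alpha$ already presupposes the correct (negative) $\mu$; with $\mu=+2\lambda^2+2\alpha\lambda$ the hypotheses $a,b\le 0$ of that lemma fail. The paper's ODE is $y'=-2y(y+\alpha)$, whose solution
\[
y(t)=\frac{\alpha}{(1+\alpha/\lambda)e^{2\alpha t}-1}
\]
is positive and globally defined for $\alpha\ge 0$, $\lambda\ge 0$.

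\textbf{Factor in $V$.} The constant term on the left of \eqref{con1} is $-\lambda\delta$ (from $-\lambda\hat\mu=-\lambda(\delta-2\alpha\hat X_t)$), and on the right it is $-V(t,\lambda)$; hence $V(t,\lambda)=\delta\lambda$, not $\tfrac{\delta\lambda}{2}$. With the wrong $V$ the final exponent would be off by a factor of two in the $\delta$-terms and would not reduce to $e^{-\lambda x}$ at $t=0$.

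Once these two corrections are made, your outline coincides with the paper's proof; the remaining work is exactly the ``elementary algebra'' of computing $xy(t)+\delta\int_0^t y(u)\,du$ and rewriting it in the stated form.
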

\begin{proof}
Let us take
$$
\sigma\equiv 0,\  \mu(t,\lambda) = -2\lambda\Big(\lambda + \alpha\Big),\  V(t,\lambda) = \delta\lambda.
$$
Lemma \ref{unique} and Proposition \ref{CCPI} for $p(t,\lambda) = \mathbb{E}e^{-\lambda \hat{X}_t}$, guarantee that $(0, \mu)$ belongs to $\mathcal{FK}$-class, so we can apply Theorem \ref{tw:Lap}.
The process $X$ is  deterministic and satisfies the ODE
\begin{equation}
	y' = -2y(y+\alpha), \ y(0) = \lambda.
\end{equation}
After solving the above ODE we get
$$
y(t) = X_t = \frac{\alpha}{(1+\alpha/\lambda)e^{2t\alpha}-1}.
$$
We insert the solution $y(t)$ in (\ref{con2}) and some elementary algebra concludes the proof.
\end{proof}
Now we give a short proof of result of Faraud and Goutte \cite{FG}.
\begin{corollary} \cite{FG} \label{wn-FG}
\label{GBPP}(Generalized squared Bessel process) Let $\Delta:[0,\infty)\rightarrow [0,\infty)$ be a continuous function and $x\geq 0$. Suppose that $\hat{X}$ is the unique nonnegative solution of  SDE
\begin{equation}\label{GBP}
   d	\hat{X}_t =  2 \sqrt{\hat{X_t}}dB_t + \Delta(t)dt, \quad \hat{X}_0 = x.
\end{equation}
Then, for $\lambda > 0$,
\begin{equation}\label{LapTGBI}
	\mathbb{E}e^{-\lambda \hat{X}_t} = \exp{\Big\{-\frac{x\lambda}{1+2\lambda t} - \lambda\int_0^t\frac{\Delta(t-u)}{1+2\lambda u}du\Big\}}.
\end{equation}
 \end{corollary}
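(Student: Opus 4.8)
The plan is to realise \eqref{GBP} as the special case $\theta\equiv 0$ of the generalized squared Bessel process II from \eqref{GBPII} and to apply Theorem \ref{tw:Lap} with a \emph{deterministic} matching diffusion. Concretely, I would set $\hat\sigma^2(u,z)=4z$, $\hat\mu(u,z)=\Delta(u)$ on the side of $\hat X$, and on the side of the matching diffusion take
$$
\sigma\equiv 0,\qquad \mu(t,\lambda)=-2\lambda^2,\qquad V(t,\lambda)=\lambda\Delta(t),\qquad g\equiv 0,\qquad h(\lambda)=e^{-x\lambda}.
$$
First one checks the easy hypotheses of Theorem \ref{tw:Lap}: $\hat X$ is nonnegative by assumption; $V,g$ are continuous; condition \eqref{martprop} holds because $\int_0^t\Delta(u)\,du<\infty$ while $\mathbb{E}\hat X_u\le x+\int_0^u\Delta(s)\,ds<\infty$ (e.g.\ by Fatou's lemma along a localizing sequence, using $\Delta\ge 0$), so $\mathbb{E}\int_0^t 4\hat X_u\,du<\infty$; and with these choices the bracketed integrands in \eqref{con1} coincide pathwise, since $\tfrac{\lambda^2}{2}\hat\sigma^2(t,\hat X_t)-\lambda\hat\mu(t,\hat X_t)=2\lambda^2\hat X_t-\lambda\Delta(t)=\hat X_t^2\tfrac{\sigma^2(t,\lambda)}{2}-\hat X_t\mu(t,\lambda)-V(t,\lambda)$, so \eqref{con1} is immediate.

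The substantive step is to verify that $(0,\mu)$ with $\mu(t,\lambda)=-2\lambda^2$ lies in the $\mathcal{FK}$-class, and here I would invoke Lemma \ref{unique} together with Proposition \ref{CCPI}. The ODE in \eqref{techcon2} is $y'=-2y^2$, $y(0,\lambda)=\lambda$, with solution $y(t,\lambda)=\lambda/(1+2\lambda t)$; from $\partial_\lambda y(t,\lambda)=(1+2\lambda t)^{-2}$ one checks \eqref{techcon1} directly, and \eqref{techcon1b} by differentiating $F(t,\lambda):=\int_0^t V(t-u,y(u,\lambda))\,du=\lambda\int_0^t \Delta(t-u)/(1+2\lambda u)\,du$ and confirming that $\partial_t F+2\lambda^2\partial_\lambda F=\lambda\Delta(t)$ — a short computation in which the boundary term produces $\lambda\Delta(t)$ and the three remaining integral contributions cancel pointwise, their combined integrand being proportional to $\big(-1+D-2\lambda(t-u)\big)/D^2=0$ with $D=1+2\lambda(t-u)$. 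Positivity of the candidate $p(t,\lambda)=\exp\{-x\,y(t,\lambda)-F(t,\lambda)\}$ is clear, and uniqueness of the solution of the Cauchy problem \eqref{CPI} follows from Lemma \ref{unique} with $a(t)\equiv -2\le 0$, $b\equiv d\equiv 0$ and $h(\lambda)=e^{-x\lambda}$.

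With $(0,\mu)\in\mathcal{FK}$ in hand, Theorem \ref{tw:Lap} applies: the matching diffusion $X$ from \eqref{tjX} is deterministic with $X_t=y(t,\lambda)=\lambda/(1+2\lambda t)$, so \eqref{con2} (with $g\equiv 0$) reads
$$
\mathbb{E}e^{-\lambda\hat X_t}=\exp\Big\{-x\,\frac{\lambda}{1+2\lambda t}-\int_0^t\frac{\lambda\,\Delta(t-u)}{1+2\lambda u}\,du\Big\},
$$
which is exactly \eqref{LapTGBI}. I expect the only genuine work to be the verification of \eqref{techcon1b} — that the convolution-type integral $F$ solves the correct transport equation — with everything else being bookkeeping; the point of the argument is that once membership in the $\mathcal{FK}$-class is confirmed, the closed form of the Laplace transform drops out of solving a scalar Riccati-type ODE.
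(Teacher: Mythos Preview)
Your proposal is correct and follows essentially the same route as the paper: choose $\sigma\equiv 0$, $\mu(\lambda)=-2\lambda^2$, $V(t,\lambda)=\lambda\Delta(t)$, verify \eqref{con1} and \eqref{martprop}, invoke Lemma~\ref{unique} and Proposition~\ref{CCPI} to place $(0,\mu)$ in the $\mathcal{FK}$-class, and read off \eqref{LapTGBI} from Theorem~\ref{tw:Lap} with the deterministic $X_t=\lambda/(1+2\lambda t)$. The only difference is cosmetic: the paper simply asserts the $\mathcal{FK}$-membership, whereas you spell out the check of \eqref{techcon1}--\eqref{techcon1b} explicitly.
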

\begin{proof} We have
\begin{align*}
	\hat{\sigma}^2(z) &= 4z, \ \hat{\mu}(t) = \Delta(t), \ \hat{X}_0 = x\geq 0.
\end{align*}
Using properties of squared Bessel processes we obtain \eqref{martprop}.
Taking
\begin{align*}
	\sigma^2(z) \equiv 0, \ \mu(z) = -2 z^2, \ X_0 = \lambda\geq 0, \ V(t,z) = \Delta(t) z ,
\end{align*}
we see that \eqref{con1} is satisfied.
Taking  $p(t,\lambda) = \mathbb{E}e^{-\lambda \hat{X}_t}$ and using Lemma \ref{unique} and Proposition \ref{CCPI}  we see that  the pair $(0, \mu)$ is in $\mathcal{FK}$. Hence, applying Theorem \ref{tw:Lap} for $\hat{X}$ and $X$ given by
\begin{equation*}
X_t = \frac{\lambda}{1+2\lambda t}
\end{equation*}
we obtain (\ref{LapTGBI}).
\end{proof}
Now, we consider the Laplace transform of $\hat{X}$, a generalized squared Bessel process II.
Observe that equality \eqref{con1} is satisfied for $\hat{X}$,
$p(t,\lambda) = \mathbb{E}e^{-\lambda\hat{X}_t}$ and
\begin{align*}
	\sigma(t,z) \equiv 0, \ \mu(t,z) = -2 z^2+ 2z\theta(t), \ X_0 = \lambda\geq 0, \ V(t,z) = \Delta(t) z.
\end{align*}
So, the matching diffusion procedure can be used, provided
the pair $(0, \mu)$ is in $\mathcal{FK}$-class.
By Lemma \ref{unique}, there exists  a unique solution to the Cauchy problem for $p(t,\lambda) = \mathbb{E}e^{-\lambda \hat{X}_t}$, the Laplace transform of $\hat{X}$, with a non-positive function $\theta$.
Moreover
\begin{lemma} \label{GBPPII}
Let $\Delta:[0,\infty)\rightarrow [0,\infty)$ be a continuous function and  $\theta:[0,\infty)\rightarrow (-\infty,0]$ be a function in $\mathcal{C}^1$. Let $\sigma, \mu$ be given by $\sigma(t,z) \equiv 0, \ \mu(t,z) = -2 z^2+ 2z\theta(t)$.
Then the pair $(\sigma, \mu)$ is
in $\mathcal{FK}$-class if and only if $\theta\equiv c$ for a non-positive constant and
\begin{align} \label{DD}
	 \Delta (t) = c\nabla_{t,\lambda}\Big(\int_0^t\Delta(t-u)\frac{e^{2cu}}{c+\lambda(e^{2cu}-1)}du\Big),
\end{align}
where $\nabla_{t,\lambda}$ is defined in (\ref{techcon2}).
\end{lemma}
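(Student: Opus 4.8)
The plan is to deduce the equivalence from Proposition~\ref{CCPI}, after which everything becomes an explicit ODE computation. First I would check that the standing hypothesis of Proposition~\ref{CCPI} is met here, i.e. that for every $x\ge 0$ the Cauchy problem \eqref{CPI} with $\mu(t,\lambda)=-2\lambda^2+2\lambda\theta(t)$, $V(t,\lambda)=\Delta(t)\lambda$ and $p(0,\lambda)=e^{-x\lambda}$ has a positive solution that is unique on $[0,\infty)\times[0,\infty)$. Uniqueness is precisely Lemma~\ref{unique} applied with $d\equiv 0$, $a\equiv -2$, $b(t)=2\theta(t)\le 0$ and $a^2+b^2\equiv 4+4\theta^2>0$; positivity and existence are supplied by the Laplace transform $p(t,\lambda)=\mathbb{E}e^{-\lambda\hat{X}_t}$ itself, which by It\^o's formula together with the pointwise identity \eqref{con1} (observed to hold for the present choice of $\sigma,\mu,V$) solves \eqref{CPI}. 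Granted this, Proposition~\ref{CCPI} says that $(0,\mu)$ is in $\mathcal{FK}$-class \emph{iff} the compatibility relations \eqref{techcon1} and \eqref{techcon1b} hold for the flow $y(t,\lambda)=\lambda+\int_0^t\mu(u,y(u,\lambda))\,du$, and the remaining task is to turn these two relations into the two conditions in the statement.

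Next I would solve the flow equation. Since $\partial_t y=-2y^2+2\theta(t)y$ is a Bernoulli equation, the substitution $w=1/y$ linearises it to $\partial_t w+2\theta(t)w=2$, $w(0,\lambda)=1/\lambda$, which integrates (with $\Theta(t)=\int_0^t\theta(s)\,ds$) to $y(t,\lambda)=\dfrac{\lambda e^{2\Theta(t)}}{1+2\lambda\int_0^t e^{2\Theta(s)}\,ds}$, so that $\partial_\lambda y(t,\lambda)=\dfrac{e^{2\Theta(t)}}{\bigl(1+2\lambda\int_0^t e^{2\Theta(s)}\,ds\bigr)^2}$. Writing $\mu(t,z)=2z(\theta(t)-z)$ and substituting these two formulas into \eqref{techcon1}, the common factor $2y$ cancels, and after clearing the remaining denominator \eqref{techcon1} collapses to the single scalar identity $e^{2\Theta(t)}=1+2\theta(t)\int_0^t e^{2\Theta(s)}\,ds$, required for all $t\ge 0$.

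Now I would differentiate this identity in $t$. Using $\bigl(e^{2\Theta(t)}\bigr)'=2\theta(t)e^{2\Theta(t)}$ and $\bigl(\int_0^t e^{2\Theta}\bigr)'=e^{2\Theta(t)}$, the $e^{2\Theta}$ contributions cancel and one is left with $\theta'(t)\int_0^t e^{2\Theta(s)}\,ds=0$; since the integral is strictly positive for $t>0$ this forces $\theta'\equiv 0$, hence $\theta\equiv c$ for a constant which is $\le 0$ by hypothesis. Conversely, if $\theta\equiv c$ then $e^{2ct}=1+2c\cdot\frac{e^{2ct}-1}{2c}$ (and its $c=0$ limit $1=1$) shows the scalar identity, hence \eqref{techcon1}, does hold. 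Thus \eqref{techcon1} is equivalent to \emph{``$\theta$ is a non-positive constant''}.

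Finally, assuming $\theta\equiv c$, the flow is the explicit rational function $y(u,\lambda)=\dfrac{c\lambda e^{2cu}}{c+\lambda(e^{2cu}-1)}$, so $\int_0^t V(t-u,y(u,\lambda))\,du=\int_0^t\Delta(t-u)\,y(u,\lambda)\,du$, and relation \eqref{techcon1b}, which reads $\Delta(t)\lambda=\nabla_{t,\lambda}\!\bigl(\int_0^t\Delta(t-u)y(u,\lambda)\,du\bigr)$, becomes \eqref{DD} once the factor $c\lambda$ is extracted from the kernel and carried through the operator $\nabla_{t,\lambda}$. Combining the three reductions gives exactly the asserted equivalence. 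The step I expect to be the main obstacle is this last one: the first-order operator $\nabla_{t,\lambda}=\partial_t-\mu(t,\lambda)\partial_\lambda$ does not commute with multiplication by $\lambda$, so rewriting \eqref{techcon1b} in the normalised form \eqref{DD} requires careful bookkeeping of how $\nabla_{t,\lambda}$ acts both on the prefactor $c\lambda$ and on the $\lambda$-dependent kernel $e^{2cu}/(c+\lambda(e^{2cu}-1))$ inside the time convolution; by contrast, the differentiation trick producing $\theta'(t)\int_0^t e^{2\Theta}=0$ is the clean conceptual core of the necessity of the constancy of $\theta$, and the rest of the argument is routine once the explicit flow is in hand.
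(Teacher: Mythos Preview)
Your approach is essentially the same as the paper's: both invoke Proposition~\ref{CCPI}, solve the Bernoulli flow to get $y(t,\lambda)=\lambda k(t)/(1+2\lambda\int_0^t k)$ with $k=e^{2\Theta}$, reduce \eqref{techcon1} to the scalar identity $k(t)=1+2\theta(t)\int_0^t k$, differentiate to force $\theta'\equiv 0$, and then read off \eqref{techcon1b} as \eqref{DD}. The paper likewise asserts without further comment that ``Condition \eqref{DD} is precisely \eqref{techcon1b} for $\theta\equiv c$'', so your flagged concern about pulling the factor $c\lambda$ through $\nabla_{t,\lambda}$ is not resolved there either; both arguments treat that final identification as a definition rather than a computation.
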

\begin{proof}
We use Proposition \ref{CCPI} and prove that  conditions (\ref{techcon1}) and (\ref{techcon1b}) hold. First, observe that if $y$ is given by  (\ref{techcon2}), i.e. $y(t,\lambda) = \lambda + \int_0^t\mu(u,y(u,\lambda))du$, then the solution, as the solution of Bernoulli equation, is of the form
\begin{align*}
	y(t,\lambda) = \frac{\lambda k(t)}{1+2\lambda\int_0^tk(u)du}, \quad  k(t) = e^{2\int_0^t\theta(u)du}.
\end{align*}
For such  $y$ and $\mu$ condition  (\ref{techcon1}), i.e. $\mu(t,y(t,\lambda)) = \mu(t,\lambda)\frac{\partial y (t,\lambda)}{\partial \lambda}$, implies that
\begin{equation} \label{techcon3}
y(t,\lambda)-\theta(t) = \frac{y(t,\lambda)}{k(t)}(1-\theta(t)/\lambda).
\end{equation}
The last in turn leads to
$$k(t) - 2\theta(t)\int_0^tk(u)du = 1,$$
which after taking derivative gives $\theta'\equiv 0$. Condition (\ref{DD}) is precisely (\ref{techcon1b}) for $\theta\equiv c$. This finishes the proof of necessity. To prove sufficiency it is enough to observe that for $\theta\equiv c$ equation (\ref{techcon3}) is satisfied, and (\ref{techcon3}) implies (\ref{techcon1}).
\end{proof}
\begin{corollary} \label{jj31} If $\hat{X}$ is a generalized squared Bessel process II, $\hat{X}_0=x\geq 0$, $\theta\equiv c\leq 0$ and (\ref{DD}) holds, then for $\lambda\geq 0$
\begin{align*}
	\mathbb{E}e^{-\lambda\hat{X}_t} = e^{-xy(t,\lambda) - \int_0^t\Delta(t-u)y(u,\lambda)du},
	\end{align*}
where
\begin{equation*}
y(t,\lambda) = \frac{\lambda ce^{2ct}}{c+2\lambda(e^{2ct}-1)}.
\end{equation*}
\end{corollary}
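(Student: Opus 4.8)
The plan is to assemble Corollary \ref{jj31} directly from the two preparatory results, Lemma \ref{GBPPII} and Theorem \ref{tw:Lap}, rather than redo any of the analytic work. First I would record the data of the matching: for a generalized squared Bessel process II $\hat{X}$ of \eqref{GBPII} with $\theta\equiv c\le 0$ we have $\hat{\sigma}^2(u,z)=4z$ and $\hat{\mu}(u,z)=\Delta(u)+2cz$, while the candidate matched diffusion is the degenerate (deterministic) one given by $\sigma\equiv 0$, $\mu(t,z)=-2z^2+2cz$, $X_0=\lambda$, together with the potential $V(t,z)=\Delta(t)z$ and $g\equiv 0$. The nonnegativity of $\hat{X}$ is part of its definition, and $V$ is nonnegative on $[0,\infty)\times[0,\infty)$ since $\Delta\ge 0$. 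Condition \eqref{martprop} follows from the standard moment estimates for squared Bessel-type processes exactly as in the proof of Corollary \ref{wn-FG}; condition \eqref{con1} was already verified in the paragraph preceding Lemma \ref{GBPPII} by the elementary computation $\frac{\lambda^2}{2}(4\hat X_t)-\lambda(\Delta(t)+2c\hat X_t)=\hat X_t^2\cdot 0-\hat X_t(-2\lambda^2+2c\lambda)-\Delta(t)\lambda$, so I would just cite that.

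Second, I would invoke Lemma \ref{GBPPII}: since $\theta\equiv c$ is a non-positive constant and \eqref{DD} is assumed to hold, the pair $(0,\mu)$ with $\mu(t,z)=-2z^2+2cz$ is in $\mathcal{FK}$-class for the functions $g\equiv 0$, $V(t,\lambda)=\Delta(t)\lambda$, $h(\lambda)=e^{-x\lambda}$. (Here Lemma \ref{unique}, applied with $a(t)\equiv -2$, $b(t)\equiv 2c\le 0$, $d\equiv 0$, supplies the uniqueness hypothesis needed to even state membership in $\mathcal{FK}$-class, as in Proposition \ref{SROU}.) With all hypotheses of Theorem \ref{tw:Lap} in place, the conclusion \eqref{con2} gives
\[
\mathbb{E}e^{-\lambda\hat{X}_t}=\mathbb{E}\,e^{-xX_t-\int_0^tV(t-u,X_u)\,du}=\mathbb{E}\,e^{-xX_t-\int_0^t\Delta(t-u)X_u\,du},
\]
and since $\sigma\equiv 0$ the process $X$ is the deterministic solution of $X_t=\lambda+\int_0^t(-2X_u^2+2cX_u)\,du$, i.e. $y(\cdot,\lambda)$ from \eqref{techcon2}; the expectation disappears and we obtain $\mathbb{E}e^{-\lambda\hat{X}_t}=e^{-xy(t,\lambda)-\int_0^t\Delta(t-u)y(u,\lambda)\,du}$.

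Third, I would close the loop by solving that Bernoulli/Riccati ODE explicitly. This is the formula already displayed in the proof of Lemma \ref{GBPPII}, $y(t,\lambda)=\lambda k(t)/(1+2\lambda\int_0^t k(u)\,du)$ with $k(t)=e^{2\int_0^t\theta(u)\,du}$; for $\theta\equiv c$ one has $k(t)=e^{2ct}$ and $\int_0^t k(u)\,du=(e^{2ct}-1)/(2c)$, whence a line of algebra yields $y(t,\lambda)=\lambda c e^{2ct}/\bigl(c+2\lambda(e^{2ct}-1)\bigr)$, which is exactly the asserted expression (the case $c=0$ being recovered as a limit, giving $y(t,\lambda)=\lambda/(1+2\lambda t)$ consistent with Corollary \ref{wn-FG}).

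The only real point requiring care — the "hard part" — is hypothesis verification rather than any new estimate: one must check that condition \eqref{DD} is genuinely what is needed to place $(0,\mu)$ in $\mathcal{FK}$-class, but this is precisely the content of Lemma \ref{GBPPII}, so under the corollary's standing assumption "(\ref{DD}) holds" there is nothing left to prove there. A secondary subtlety is the domain of validity of the closed form for $y(t,\lambda)$: the denominator $c+2\lambda(e^{2ct}-1)$ stays away from $0$ for all $t\ge 0$ precisely because $c\le 0$ makes $e^{2ct}-1\le 0$ and hence $c+2\lambda(e^{2ct}-1)\le c<0$ when $c<0$ (and $=1+2\lambda t>0$ after the obvious renormalization when $c=0$), so $X_t=y(t,\lambda)$ is well-defined, nonnegative and bounded on $[0,\infty)$, which is what legitimizes using it in \eqref{con2}. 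Everything else is the bookkeeping of matching coefficients and a short ODE computation, both already carried out in the surrounding text.
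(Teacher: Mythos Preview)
Your proposal is correct and follows exactly the paper's own approach: the paper's proof is the one-liner ``Follows from Lemma \ref{GBPPII} and Theorem \ref{tw:Lap},'' and you have simply unpacked that citation by recording the matching data, checking \eqref{martprop} and \eqref{con1}, invoking Lemma \ref{GBPPII} (with Lemma \ref{unique}) to get the $\mathcal{FK}$-class membership, applying Theorem \ref{tw:Lap} to the deterministic $X$, and specializing the Bernoulli solution from the proof of Lemma \ref{GBPPII} to $\theta\equiv c$. There is no methodological difference to discuss.
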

\begin{proof} Follows from Lemma \ref{GBPPII} and Theorem \ref{tw:Lap}.
\end{proof}

\begin{remark} \label{uw2.8}
We consider separately generalized squared Bessel processes and generalized squared Bessel processes II
for historical reason. The first generalization was considered by Donati-Martin and Yor \cite{DYM}, and the second one by  Faraud and Goutte \cite{FG}.
 In Lemma 3.1  \cite[Section 3]{DYM} Donati-Martin and Yor presented
the conection of functional of  generalized squared Bessel process with a solution of Sturm-Liouville equation. For a measurable function $f:[0,\infty)\rightarrow[0,\infty)$ with compact support
they obtained that
\begin{equation}\label{DMYF}
	\mathbb{E}\exp\Big\{-\frac12\int_0^{\infty}X_u f(u) du\Big\} = \exp\Big\{\frac{x}{2}\Phi'(0) + \frac12\int_0^{\infty}\frac{\Phi'(u)}{\Phi(u)}\Delta(u)du\Big\},
\end{equation}
where $\Phi'' = \Phi f$ and this solution is nonnegative decreasing and $\Phi(0) = 1$. However no other concrete formulae for (\ref{DMYF}) was presented there.
Observe  that taking $\Delta(t) = \delta > 0$ in Corrolary \ref{GBPP} we obtain the Laplace transform of standard squared Bessel process - see   \cite[Corollary 1.3]{RY}.
Faraud and Goutte \cite{FG}
discussed existence, uniqueness  and positivity of solution of generalized squared Bessel process II using the standard techniques (see Section 1.2.1 there). The authors extended the additivity property to the generalized squared Bessel process II. They used the removal drift technique presented earlier in \cite{PY} and in the case of $\theta\equiv 0$, analogous as in \cite{DYM}, they used the solution of Sturm-Liouville equation for a functional of the form $\mathbb{E}\exp\Big\{-\frac12\int_0^{a}X_u f(u)du\Big\}$ for $a>0$ and  presented formula  (\ref{LapTGBI}) for the Laplace transform of the generalized squared Bessel process \cite[Corollary 1.2]{FG}.
 However, they didn't present the Laplace transform for the generalized squared Bessel process given by (\ref{GBPII}).
\end{remark}

As we see in Corollary \ref{jj31} the direct use of Theorem \ref{tw:Lap} give the  Laplace transform of  generalized Bessel processes for very narrow class of coefficients. To obtain a more general result we modify  the matching diffusion method. Let us observe that $p(t,\lambda) = \mathbb{E}e^{-\lambda\hat{X}_t}$ satisfies the following linear PDE of the first order
\begin{align}\label{LaP}
	\frac{\partial p}{\partial t} = \Big(-2\lambda^2 + 2\lambda\theta(t)\Big)\frac{\partial p}{\partial \lambda} -\Delta(t)\lambda p, \quad  p(0,\lambda) = e^{-x\lambda},
\end{align}
where  $\Delta:[0,\infty)\rightarrow [0,\infty)$ and $\theta:[0,\infty)\rightarrow \mathbb{R}$ are given continuous functions and $x\geq 0$.
So, the Laplace transform for process being a member of family of generalized Bessel processes II satisfies a  PDE of a very  special form.  Lemma \ref{unique} guarantees the uniqueness of its solution.
Idea which enables us to find the form of $p$ comes from Remark  \ref{uw2.8}, 
which suggests the form of the solution.

\begin{proposition}\label{GSBPII}
(Generalized squared Bessel process II) Let $\Delta:[0,\infty)\rightarrow [0,\infty)$ and $\theta:[0,\infty)\rightarrow (-\infty,0]$ be continuous functions and $\hat{X}$ be a generalized Bessel process II with $X_0=x\geq 0$. Then, for $\lambda\geq 0$,
\begin{align*}
	\mathbb{E}e^{-\lambda\hat{X}_t} &= e^{-x\phi(t,\lambda)-\psi(t,\lambda)},
\end{align*}
where
\begin{align*}
	\phi(t,\lambda) = \frac{\lambda}{k(t) +2\lambda\int_0^tk(u)du}&,\quad
	\psi(t,\lambda) = v\Big(t, \frac{\lambda}{k(t) +2\lambda\int_0^tk(u)du}\Big),\\
	k(t) = e^{-2\int_0^t\theta(u)du},\quad
	v(t,z) &= z \int_0^t\frac{\Delta(u)k(u)}{1-2z\int_0^uk(s)ds}du.
\end{align*}
\end{proposition}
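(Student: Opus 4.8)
The plan is to identify $p(t,\lambda):=\mathbb{E}e^{-\lambda\hat X_t}$ as the unique solution of the linear first–order PDE \eqref{LaP} and then to produce that solution explicitly by the method of characteristics, so that verifying the stated formula reduces to solving one Bernoulli ODE and carrying out a short algebraic identification. The reason uniqueness is available here — without demanding, as in Lemma~\ref{GBPPII}, that $(0,\mu)$ lie in the $\mathcal{FK}$-class — is precisely the modification announced before the statement: we work directly with \eqref{LaP} rather than with the matching diffusion.

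\emph{Reduction to a PDE.} First I would apply It\^o's formula to $e^{-\lambda\hat X_t}$ exactly as in the proof of Theorem~\ref{tw:Lap}, now with $\hat\sigma(t,z)=2\sqrt z$ and $\hat\mu(t,z)=\Delta(t)+2\theta(t)z$; the integrability hypotheses \eqref{martprop} hold by the classical moment estimates for squared Bessel–type processes, so the stochastic integral is a true martingale, and after differentiating in $t$ under the expectation (legitimate since $0\le p\le 1$ and the integrand in the It\^o decomposition is locally dominated) one obtains that $p\in\mathcal{C}^{1,2}$ solves \eqref{LaP} with $p(0,\lambda)=e^{-x\lambda}$, using $\mathbb{E}\hat X_te^{-\lambda\hat X_t}=-\partial_\lambda p$. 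Since $\theta\le 0$, Lemma~\ref{unique} applies with $a\equiv-2$, $b=2\theta$, $d\equiv 0$, $h(\lambda)=e^{-x\lambda}$ (indeed $a,b\le 0$ and $a^2+b^2>0$), and gives uniqueness of the solution of \eqref{LaP} on $[0,\infty)\times[0,\infty)$. Hence it suffices to check that $q(t,\lambda):=e^{-x\phi(t,\lambda)-\psi(t,\lambda)}$ solves \eqref{LaP}.

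\emph{Characteristics.} Fix $(t,\lambda)$ with $\lambda>0$ and let $\Lambda$ solve the Bernoulli equation $\dot\Lambda(s)=2\Lambda(s)^2-2\theta(s)\Lambda(s)$ with terminal value $\Lambda(t)=\lambda$. The substitution $w=1/\Lambda$ turns this into the linear equation $\dot w-2\theta w=-2$, whose solution with integrating factor $k(s)=e^{-2\int_0^s\theta(u)du}$ is $\Lambda(s)=\frac{\lambda k(s)}{k(t)+2\lambda\int_s^t k(u)\,du}$; since $\theta\le 0$ the denominator stays positive on $[0,t]$, so $\Lambda$ is well defined there. Along this curve \eqref{LaP} gives $\frac{d}{ds}p(s,\Lambda(s))=-\Delta(s)\Lambda(s)\,p(s,\Lambda(s))$, whence $p(t,\lambda)=e^{-x\Lambda(0)}\exp\{-\int_0^t\Delta(s)\Lambda(s)\,ds\}$; reading this backwards shows that $q$ with $\phi(t,\lambda)=\Lambda(0)=\frac{\lambda}{k(t)+2\lambda\int_0^t k}$ and $\psi(t,\lambda)=\int_0^t\Delta(s)\Lambda(s)\,ds$ indeed solves \eqref{LaP}, and $q(0,\lambda)=e^{-x\lambda}$ because $k(0)=1$. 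It remains only to rewrite $\psi$ in the form stated: with $\phi=\phi(t,\lambda)$ one checks $1-2\phi\int_0^u k(s)\,ds=\frac{k(t)+2\lambda\int_u^t k}{k(t)+2\lambda\int_0^t k}$, so $\frac{\phi}{1-2\phi\int_0^u k}=\frac{\lambda}{k(t)+2\lambda\int_u^t k}$ and therefore $v(t,\phi)=\int_0^t\Delta(u)k(u)\frac{\lambda}{k(t)+2\lambda\int_u^t k}\,du=\int_0^t\Delta(u)\Lambda(u)\,du=\psi(t,\lambda)$, which together with the displayed expression for $\phi(t,\lambda)$ is exactly the claimed formula. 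The main obstacle is not conceptual but a matter of care: making the passage from the It\^o decomposition to \eqref{LaP} fully rigorous (regularity of $p$ and differentiation under the expectation), and carrying out the algebraic identification of $\psi$ with $v(t,\phi)$ without sign errors; the uniqueness step itself is free, being granted by Lemma~\ref{unique}.
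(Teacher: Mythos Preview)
Your proof is correct and follows essentially the same strategy as the paper: reduce to the first-order linear PDE \eqref{LaP}, invoke Lemma~\ref{unique} for uniqueness, and produce the explicit solution via the method of characteristics (the same Bernoulli ODE). The only difference is cosmetic: the paper first postulates the form $p=e^{-x\phi-\psi}$, splits \eqref{LaP} into separate equations for $\phi$ and $\psi$, and runs the characteristic forward from an initial point $z$ (then inverts $q_z(t)=\lambda$), whereas you attack $p$ directly with a backward characteristic $\Lambda(t)=\lambda$ and read off $\phi=\Lambda(0)$, $\psi=\int_0^t\Delta\Lambda$ in one stroke; your route is slightly more streamlined but not conceptually different.
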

\begin{proof}
We find a solution $p$ of (\ref{LaP}), giving the Laplace transform, in a form
\begin{equation} \label{j500}
p(t,\lambda) = e^{-x\phi(t,\lambda)-\psi(t,\lambda)}
\end{equation}
with appropriate  chosen $\phi$ and $\psi$.
For a such $p$ we have from (\ref{LaP})
\begin{align*}
	\Big(x\frac{\partial\phi}{\partial t}+\frac{\partial\psi}{\partial t}\Big)p = \Big(-2\lambda^2 + 2\lambda\theta(t)\Big)\Big(x\frac{\partial\phi}{\partial \lambda}+\frac{\partial\psi}{\partial \lambda}\Big)p + \Delta(t)\lambda p.
\end{align*}
As the above equality holds for any $x\geq 0$ we conclude that
\begin{align} \label{eq:fi}
 \frac{\partial\phi}{\partial t} &= \Big(-2\lambda^2 + 2\lambda\theta(t)\Big)\frac{\partial\phi}{\partial \lambda}, \quad \phi(0,\lambda) = \lambda, \\ \label{eq:pfi}
 \frac{\partial\psi}{\partial t} &= \Big(-2\lambda^2 + 2\lambda\theta(t)\Big)\frac{\partial\psi}{\partial \lambda} + \Delta(t)\lambda, \quad \psi(0,\lambda) = 0.
\end{align}
Let us solve \eqref{eq:fi}. The form of this linear PDE can be find in \cite{PZM} (or in 
Eqworld  \cite[Section 1.2 point 9]{E}) and its solution is of the form
$$
	\phi(t,\lambda) = G\Big( \frac{k(t)}{\lambda} + 2\int_0^tk(u)du\Big),
$$
where $G$ is an arbitrary function. Initial condition $\phi(0,\lambda) = \lambda$ and $k(0) =1$ force $G$ to be $G(\lambda) = 1/\lambda$ for $\lambda>0$. \\
Now we  solve \eqref{eq:pfi}. This one can not be find  in \cite{PZM} or  \cite{E}, so we give a solution.
We use the characteristic method and first solve ODE: $q'(t) = 2q^2(t) - 2q(t)\theta(t)$, $q(0) = z$. In result, we obtain
$$
	q_{z}(t) = \frac{z k(t)}{1 - 2z \int_0^tk(u)du}.
$$
Substituting $q_z(t) = \lambda$ we have
$$
	z = \frac{\lambda}{k(t)+2\lambda\int_0^tk(u)du}.
$$
In the second step we solve ODE: $v'(t) = \Delta(t)q_{z}(t), \ v(0) = 0$ and obtain
$$
	v_{z}(t) = z\int_0^t\frac{\Delta(u)k(u)}{1-2z\int_0^uk(s)ds}du.
$$
Let us observe that for $z = \lambda(k(t)+2\lambda\int_0^tk(u)du)^{-1}$ we have $1-2z\int_0^uk(s)ds>0$, so $v$ is well defined. Finally
$$
	\psi(t,\lambda) = v_{z}(t)\big|_{z = \lambda(k(t)+2\lambda\int_0^tk(u)du)^{-1}} .
$$
Summing up, we find a solution of \eqref{LaP} in a form \eqref{j500} and by uniqueness of solution it follows that this solution gives the Laplace transform.
\end{proof}
Now we present an application of Proposition \ref{GSBPII} with $\theta \not\equiv 0$.
\begin{example} (Squared Bessel bridge)\label{SBBB}
A squared Bessel bridge is the unique solution to the SDE
\begin{equation}\label{SBB}
	X_t = x+ 2\int_0^t\sqrt{X_s}dB_s +\int_0^t\Big(\delta - \frac{2X_s}{1-s}\Big)ds,
\end{equation}
where $t\in[0,1)$, $x\geq0$ and $\delta \geq 0$ (see \cite[Chapter XI Section 3]{RY}).

The coefficients for the matching diffusions technique are
$$
\sigma\equiv 0,\  \mu(t,\lambda) = -2\lambda\Big(\lambda + \frac{1}{1-t}\Big).
$$
It turns out that in the case of squared Bessel bridge the matching diffusion technique fails to work, since
the pair $(0, \mu)$ does not belong
to $\mathcal{FK}$-class. Indeed, it is a consequence of Proposition \ref{GBPPII} since the function
$\theta:[0,1)\rightarrow (-\infty,0]$ is given by $\theta(t) = -\frac{1}{1-t}$.
However, using  Proposition \ref{GSBPII} for that $\theta$ and $\Delta(t) \equiv\delta$ we easily obtain the Laplace transform of squared Bessel bridge  presented earlier by Revuz and Yor  \cite[Chapter XI Section 3 ex. (3.6)]{RY}:
\begin{equation}
	\mathbb{E}e^{-\lambda X_t} = \Big(1+2\lambda t (1-t)\Big)^{-\delta/2}e^{-\frac{x\lambda(1-t)^2}{1+2\lambda t(1-t)}}.
\end{equation}
\end{example}

After revisiting the generalized squared Bessel process family, we use the matching diffusion technique to go beyond the affine processes. Let us consider the process given by
\begin{equation}\label{baff}
	\hat{X}_t = x + \int_0^t\sqrt{|\hat{X}_u^2+2\hat{X}_u|}dB_u +\int_0^t(b\hat{X_u} + a)du,
\end{equation}
where $x\geq 0, a\geq 0, b\in\mathbb{R}$. It is an exercise to check that this SDE has an unique nonnegative solution. After skipping the absolute value sign in (\ref{baff}), we have the following result.
\begin{proposition} \label{proposition2.14}
 Let $\hat{X}$ be given by (\ref{baff}) with $b\geq 1/2$. Then, for a fixed $t> 0$,
\begin{align*}
\hat{X}_t  \stackrel{(law)}{=} \frac{R_1}{2}\int_0^te^{B_u + (b-1/2)u}du,
\end{align*}
where $R$ is a $2a$-dimensional squared Bessel process starting from $$2xe^{B_t+(b-1/2)t}\Big/ \int_0^te^{B_u + (b-1/2)u}du$$ and independent of $B$, i.e. for $s\geq0$
$$
	R_s = \frac{2xe^{B_t+(b-1/2)t}}{\int_0^te^{B_u + (b-1/2)u}du} + 2\int_0^s\sqrt{R}_udV_u + 2as,
$$
where $V$ is a standard Brownian motion independent of $B$.
\end{proposition}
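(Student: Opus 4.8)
The plan is to identify the law of $\hat X_t$ via the matching-diffusion philosophy of Theorem \ref{tw:Lap}: compute $\mathbb E e^{-\lambda \hat X_t}$ by matching $\hat X$ against an auxiliary diffusion $X$, recognize the resulting Laplace transform as that of the claimed time-changed squared Bessel process, and conclude by uniqueness of Laplace transforms. First I would apply Itô's formula to $e^{-\lambda \hat X_t}$ for the process in (\ref{baff}) (with the absolute value dropped, i.e. on the event where $\hat X^2+2\hat X\geq 0$, which holds since $\hat X\geq 0$), obtaining that $p(t,\lambda):=\mathbb E e^{-\lambda\hat X_t}$ solves a first-order PDE. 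With $\hat\sigma^2(z)=z^2+2z$ and $\hat\mu(z)=bz+a$ one gets, after the computation analogous to the one preceding (\ref{con1}),
\begin{align*}
\frac{\partial p}{\partial t} = \Big(\tfrac{\lambda^2}{2}-b\lambda\Big)\frac{\partial p}{\partial \lambda}\cdot(\cdots) \ -\ \lambda^2\,\frac{\partial p}{\partial\lambda}\ -\ a\lambda\, p,
\end{align*}
which I would reorganize carefully; the key point is that the quadratic term $\tfrac12\hat\sigma^2\lambda^2 = \tfrac12(\hat X^2+2\hat X)\lambda^2$ produces both a $\tfrac{\lambda^2}{2}\partial_\lambda^2 p$-type contribution and a $\lambda^2\partial_\lambda p$ contribution when one rewrites $\mathbb E[\hat X_t^2 e^{-\lambda\hat X_t}]$ and $\mathbb E[\hat X_t e^{-\lambda\hat X_t}]$ in terms of $p$ and its $\lambda$-derivatives. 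The outcome should be a PDE whose characteristics are exactly the dynamics of a geometric Brownian motion $\lambda_s = \lambda e^{-(B_s+(b-1/2)s)}$ (this is the Zvonkin/Doss–Sussmann-type phenomenon mentioned in the introduction, Section 3), and whose zeroth-order term integrates to the exponential functional $\int_0^t e^{B_u+(b-1/2)u}du$.

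Next I would solve that PDE by the method of characteristics, exactly in the style of the proof of Proposition \ref{GSBPII}: first solve the ODE for the characteristic curve $q_z'(s)=\tfrac12 q_z^2(s)-bq_z(s)$ (or its Brownian-perturbed analogue), giving $q_z$ in closed form in terms of $z$ and $\int_0^s e^{B_u+(b-1/2)u}du$; then invert to express $z$ in terms of $\lambda$; and finally integrate the source term $-a\lambda\,p$ along characteristics to pick up the factor coming from the $2a$-dimensional squared Bessel process. Alternatively — and this is probably cleaner — I would compute the Laplace transform of the right-hand side directly: conditionally on $B$, write $A_t:=\tfrac12\int_0^t e^{B_u+(b-1/2)u}du$, so the claimed variable is $R_1 A_t$ where $R$ is a $2a$-dimensional squared Bessel process started from $2x e^{B_t+(b-1/2)t}/(2A_t)=x e^{B_t+(b-1/2)t}/A_t$; using the standard Laplace transform of $\mathrm{BESQ}^{\delta}_y$ at time $1$, namely $\mathbb E_y e^{-\mu R_1}=(1+2\mu)^{-\delta/2}\exp\{-\mu y/(1+2\mu)\}$ with $\delta=2a$, $\mu=\lambda A_t$, $y=x e^{B_t+(b-1/2)t}/A_t$, one gets
\begin{align*}
\mathbb E\big[e^{-\lambda R_1 A_t}\,\big|\,B\big] = \big(1+2\lambda A_t\big)^{-a}\exp\Big\{-\frac{\lambda x e^{B_t+(b-1/2)t}}{1+2\lambda A_t}\Big\}.
\end{align*}
Taking $\mathbb E[\,\cdot\,]$ over $B$ then gives a functional of the exponential Brownian functional, and I would check that this equals the $p(t,\lambda)$ obtained from the PDE (equivalently, that both satisfy the same Cauchy problem, whose uniqueness is guaranteed by an argument in the spirit of Lemma \ref{unique}).

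The main obstacle I anticipate is bookkeeping in the PDE derivation — correctly translating $\mathbb E[\hat X_t^k e^{-\lambda\hat X_t}]$ into $(-1)^k\partial_\lambda^k p$ and then reassembling the terms so that the characteristic system is manifestly the geometric-Brownian-motion flow; the nonautonomous drift $b$ and the linear-in-$\hat X$ part of $\hat\sigma^2$ both contribute to the same $\partial_\lambda p$ coefficient, and sign errors there would break the identification. A secondary subtlety is justifying that the local martingale arising from Itô's formula is a true martingale (so that expectations may be taken), which follows from the nonnegativity of $\hat X$ together with moment bounds analogous to (\ref{martprop}); and one must also note that starting the squared Bessel process $R$ from the $B$-measurable random point $2x e^{B_t+(b-1/2)t}/\int_0^t e^{B_u+(b-1/2)u}du$ with $V$ independent of $B$ is exactly what makes the conditional computation above legitimate. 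Once the two closed forms are matched, uniqueness of the Laplace transform on $[0,\infty)$ yields the claimed equality in law.
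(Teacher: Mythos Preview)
Your overall strategy --- compute $\mathbb E e^{-\lambda\hat X_t}$, compute the Laplace transform of the proposed right-hand side, and match --- is the right one, and your conditional BESQ computation is correct and is exactly what the paper does at the end. But the heart of your argument has a real gap.

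Because $\hat\sigma^2(z)=z^2+2z$ contains a $z^2$ term, the PDE satisfied by $p(t,\lambda)=\mathbb E e^{-\lambda\hat X_t}$ is genuinely \emph{second order} in $\lambda$:
\[
\frac{\partial p}{\partial t}=\frac{\lambda^2}{2}\frac{\partial^2 p}{\partial\lambda^2}+(b\lambda-\lambda^2)\frac{\partial p}{\partial\lambda}-a\lambda\,p,\qquad p(0,\lambda)=e^{-x\lambda}.
\]
So the method of characteristics in the style of Proposition~\ref{GSBPII} does not apply here; there is no deterministic characteristic flow, and Lemma~\ref{unique} (which is about first-order equations) gives no uniqueness. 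Your parenthetical ``Brownian-perturbed analogue'' is the right instinct, but it is not a perturbation of the characteristic method --- it \emph{is} the matching diffusion. The paper takes $\sigma(\lambda)=\lambda$, $\mu(\lambda)=b\lambda-\lambda^2$, $V(\lambda)=a\lambda$ in Theorem~\ref{tw:Lap}, so that the matched process $X$ is the \emph{stochastic} diffusion
\[
dX_t=X_t\,dB_t+(bX_t-X_t^2)\,dt,\qquad X_0=\lambda,
\]
which has the explicit solution
\[
X_t=\frac{\lambda e^{B_t+(b-1/2)t}}{1+\lambda\int_0^t e^{B_u+(b-1/2)u}du}.
\]
One then checks $(\sigma,\mu)\in\mathcal{FK}$ via a moment bound of the type~\eqref{sup0}, so Theorem~\ref{tw:Lap} gives
\[
\mathbb E e^{-\lambda\hat X_t}=\mathbb E\,e^{-xX_t-a\int_0^tX_u\,du}.
\]
The identity $e^{\int_0^tX_u\,du}=1+\lambda\int_0^t e^{B_u+(b-1/2)u}\,du$ then turns the right-hand side into precisely your conditional BESQ expression (this is where the paper invokes the scaling property of squared Bessel processes rather than the raw Laplace transform, but the two are equivalent). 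In short: replace ``solve the PDE by characteristics'' with ``apply Theorem~\ref{tw:Lap} with the nondegenerate matching diffusion $X$ above and verify the $\mathcal{FK}$-class hypothesis'', and your argument becomes the paper's.
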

\begin{proof}
We use the matching diffusion technique for
$$
	\sigma(\lambda) = \lambda, \ \mu(\lambda) = b\lambda - \lambda^2, \ V(\lambda) = a\lambda.
$$
We have to prove that the pair $(\sigma,\mu)$ is in $\mathcal{FK}$-class for a diffusion
\begin{equation*}
	X_t = \frac{\lambda e^{B_t+(b-1/2)t}}{1+\lambda\int_0^te^{B_u + (b-1/2)u}du} > 0.
\end{equation*}
The process $X$ is the unique solution of
$$
dX_t= (bX_t - X_t^2)dt + X_t dB_t .
$$
As $\mu,\sigma$ do not depend on time and $V$ is a linear function, it suffices to prove that  the following condition is satisfied: 
for all $s\leq t\leq T$, $m\geq 1$
\begin{equation}\label{sup0}
	\mathbb{E}\sup_{u\in[s,t]}X_u^{2m} \leq C (1+\lambda^{2m})e^{C(t-s)},
\end{equation}
where $C = C(m,T)$ is a constant and $X_0 = \lambda$. Indeed, the function $p$ is bounded by 1, so our assumptions and (\ref{sup0}) allow to mimic successfully the proof of Feynman-Kac representation given in  \cite[Theorem 7.6 Section 5]{KS}.
 It remains to prove  condition (\ref{sup0}).
We have
\begin{align*}
	\mathbb{E}\sup_{u\in[s,t]}X_u^{2m} &\leq \lambda^{2m}e^{-(2b-1)sm}\mathbb{E} \Big(e^{sup_{u\in[s,t]}B_u}\Big)^{2m}\\
	&\leq 2\lambda^{2m}e^{m^2t-(2b-1)s} \leq C e^{(2b-1)(t-s)}.
\end{align*}
Thus, by Theorem \ref{tw:Lap}, we obtain
\begin{equation}\label{EL}
	\mathbb{E}e^{-\lambda\hat{X}_t} = \mathbb{E}e^{-xX_t - a\int_0^tX_udu}.
\end{equation}
Observe that
$$
	e^{\int_0^tX_udu} = 1 + \lambda\int_0^te^{B_u + (b-1/2)u}du.
$$
Thus
\begin{align*}
	\mathbb{E}e^{-xX_t - a\int_0^tX_udu} &= \mathbb{E} e^{-xX_t} \Big(1 + \lambda\int_0^te^{B_u + (b-1/2)u}du\Big)^{-a}\\
	&= \mathbb{E}e^{-\lambda \frac{R_1}{2}\int_0^te^{B_u + (b-1/2)u}du},
\end{align*}
where we used the scaling property of squared Bessel process (see  \cite[Prop.1.6 Chapter XI]{RY}).
The assertion follows from the last equality and (\ref{EL}).
\end{proof}

\section{Matching Diffusion for a choosen function and Laplace transform of a vector}
In Theorem \ref{tw:Lap} the matching diffusion technique was presented and used for 
the function $p$ being  the Laplace transform of $\hat{X}$. We now present that the matching diffusion technique can be extended successfully for an appropriately chosen function $f$ and function $p$ of the form $p(t,\lambda):= \mathbb{E}e^{-\lambda f(\hat{X}_t)}$.
\begin{definition}(Hypothesis H) For a given diffusion $\hat{X}$ with coefficients $(\hat{\sigma},\hat{\mu})$ we say that $f\in\mathcal{C}^2$ satisfies hypothesis H if for all $t\geq 0, \lambda \geq 0$
\begin{enumerate}
\item[(i)] $\mathbb{E}e^{-\lambda f(\hat{X}_t)} < \infty$,
\item[(ii)] $ \mathbb{E}\int_0^te^{-2\lambda f(\hat{X}_u)}(f'(\hat{X}_u))^2\hat{\sigma}^2(u,\hat{X}_u)du < \infty,$
\item[(iii)] $\mathbb{E}\int_0^te^{-\lambda f(\hat{X}_u)}\Big|\frac12f''(\hat{X}_u)\hat{\sigma}^2(u,\hat{X}_u)+f'(\hat{X}_u)\hat{\mu}(u,\hat{X}_u)\Big|du < \infty,$
\item[(iv)] $ \frac{\partial^i}{\partial \lambda^i}\mathbb{E}e^{-\lambda f(\hat{X}_t)} = (-1)^i\mathbb{E}f^i(\hat{X}_t)e^{-\lambda f(\hat{X}_t)} < \infty, \ i =1,2.$
\end{enumerate}
\end{definition}
Notice that hypothesis H holds, for example, for a diffusion given by \eqref{tjX-hat} satisfying (\ref{martprop}) and for a  nonnegative function $f\in\mathcal{C}^2$ with bounded both derivatives.
\begin{theorem} \label{Lapf} Let $\hat{X}, X$ be diffusions given by \eqref{tjX-hat} and \eqref{tjX}, respectively.
Let $f$ satisfy hypothesis H and be such that $\mathbb{E}e^{-f(x) X_t}<\infty$ for any $t\geq 0, \lambda > 0$.
 Assume that for a continuous and nonnegative function  $V$ and the  function $h(\lambda) = \mathbb{E}e^{-\lambda f(x)}$, the pair $(\sigma,\mu)$ is in $\mathcal{FK}$-class.
  If, for $\lambda \geq 0$,
\begin{align}
\begin{split}
\label{fassump}
\mathbb{E}&\Big[e^{-\lambda f(\hat{X}_t)}\Big[\frac{\lambda^2}{2}(f'(\hat{X}_t))^2\hat{\sigma}^2(t,\hat{X}_t)-\lambda\Big(f'(\hat{X}_t)\hat{\mu}(t,\hat{X}_t)+\frac12 f''(\hat{X}_t)\hat{\sigma}^2(t,\hat{X}_t)\Big)\Big]\Big]\\
&= \mathbb{E}\Big[e^{-\lambda f(\hat{X}_t)}\Big(\frac{\sigma^2(t,\lambda)}{2}f^2(\hat{X}_t) - \mu(t,\lambda)f(\hat{X}_t) - V(t,\lambda)\Big)\Big], 
\end{split}
\end{align}
then
\begin{equation}
	\mathbb{E}e^{-\lambda f(\hat{X}_t)} = \mathbb{E}e^{-f(x) X_t -\int_0^tV(t-u,X_u)du}.
\end{equation}
\end{theorem}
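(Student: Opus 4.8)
The plan is to mirror the proof of Theorem \ref{tw:Lap}, replacing the exponential $e^{-\lambda\hat X_t}$ by $e^{-\lambda f(\hat X_t)}$ and carefully tracking the It\^o correction term coming from $f$. First I would set $p(t,\lambda):=\mathbb{E}e^{-\lambda f(\hat X_t)}$, which is finite by hypothesis~H~(i), and apply It\^o's lemma to the semimartingale $f(\hat X_t)$ and then to $e^{-\lambda f(\hat X_t)}$. This yields
\begin{align*}
de^{-\lambda f(\hat X_t)} &= -\lambda e^{-\lambda f(\hat X_t)}f'(\hat X_t)\hat\sigma(t,\hat X_t)\,dB_t\\
&\quad + e^{-\lambda f(\hat X_t)}\Big[\tfrac{\lambda^2}{2}(f'(\hat X_t))^2\hat\sigma^2(t,\hat X_t) - \lambda\big(f'(\hat X_t)\hat\mu(t,\hat X_t) + \tfrac12 f''(\hat X_t)\hat\sigma^2(t,\hat X_t)\big)\Big]dt.
\end{align*}
Hypothesis~H~(ii) guarantees that the stochastic integral is a true martingale, so taking expectations kills the $dB_t$ term, and hypothesis~H~(iii) justifies interchanging expectation and the time integral of the drift. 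This gives an integral equation for $p(t,\lambda)$.

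Next I would use assumption \eqref{fassump} to rewrite the drift term under the expectation: its right-hand side expresses the same quantity as $\mathbb{E}\big[e^{-\lambda f(\hat X_t)}\big(\tfrac{\sigma^2(t,\lambda)}{2}f^2(\hat X_t) - \mu(t,\lambda)f(\hat X_t) - V(t,\lambda)\big)\big]$. Now hypothesis~H~(iv) is what lets me recognize this in terms of $p$ and its $\lambda$-derivatives: $\mathbb{E}f(\hat X_t)e^{-\lambda f(\hat X_t)} = -\partial_\lambda p$ and $\mathbb{E}f^2(\hat X_t)e^{-\lambda f(\hat X_t)} = \partial^2_\lambda p$, and both are finite. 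Substituting, the integral equation becomes, after differentiating in $t$,
\begin{align*}
\frac{\partial p}{\partial t} = \frac{\sigma^2(t,\lambda)}{2}\frac{\partial^2 p}{\partial\lambda^2} + \mu(t,\lambda)\frac{\partial p}{\partial\lambda} - V(t,\lambda)p = \mathcal{A}_t p - Vp,
\end{align*}
with $\mathcal{A}_t$ the generator of $X$, and initial condition $p(0,\lambda) = \mathbb{E}e^{-\lambda f(x)} = h(\lambda)$. I would note, as in the original proof, that $p\in\mathcal C^{1,2}$ on the relevant domain, which is needed to invoke the $\mathcal{FK}$-class membership.

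Finally, since $(\sigma,\mu)$ is in $\mathcal{FK}$-class for these $V$ and $h$, the probabilistic representation from the definition applies (with $g\equiv 0$), giving
\begin{align*}
p(t,\lambda) = \mathbb{E}h(X_t)e^{-\int_0^t V(t-u,X_u)du}.
\end{align*}
The one extra wrinkle compared to Theorem \ref{tw:Lap} is that the initial datum $h(\lambda)=\mathbb{E}e^{-\lambda f(x)}$ is itself an expectation (the law of $f$ applied to a possibly random object — here it degenerates since $\hat X_0=x$ is deterministic, so actually $h(\lambda)=e^{-\lambda f(x)}$), so one must check $\mathbb{E}h(X_t)e^{-\int_0^t V}$ collapses to $\mathbb{E}e^{-f(x)X_t - \int_0^t V(t-u,X_u)du}$; the hypothesis $\mathbb{E}e^{-f(x)X_t}<\infty$ is exactly what makes this last expectation well-defined. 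Evaluating at $\lambda$ yields the claimed identity $\mathbb{E}e^{-\lambda f(\hat X_t)} = \mathbb{E}e^{-f(x)X_t - \int_0^t V(t-u,X_u)du}$.

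I expect the main obstacle to be the justification that $p\in\mathcal C^{1,2}$ and that the formal manipulations (differentiating the integral equation in $t$, interchanging $\partial_\lambda$ with expectation) are legitimate — this is precisely where hypotheses~H~(ii)--(iv) do the work, and one has to be careful that the domain is $(0,\infty)\times(0,\infty)$ where these derivatives exist, matching the regularity the $\mathcal{FK}$-class definition demands. The rest is a routine adaptation of the argument already given for Theorem \ref{tw:Lap}.
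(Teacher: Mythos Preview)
Your proposal is correct and follows essentially the same route as the paper: define $p(t,\lambda)=\mathbb{E}e^{-\lambda f(\hat X_t)}$, apply It\^o's formula to $e^{-\lambda f(\hat X_t)}$, use hypothesis~H to justify taking expectations and differentiating under the integral, invoke \eqref{fassump} to recognize the result as the Cauchy problem $\partial_t p = \mathcal A_t p - Vp$ with $p(0,\lambda)=e^{-\lambda f(x)}$, and conclude via the $\mathcal{FK}$-class assumption. Your write-up is in fact more explicit than the paper's about which part of hypothesis~H is used at each step, but the argument is the same.
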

\begin{proof} For $p(t,\lambda):= \mathbb{E}e^{-\lambda f(\hat{X}_t)}$, It\^o's lemma and assumptions on $f$ give
\begin{align*}
	\frac{\partial p}{\partial t}(t,\lambda) = \mathbb{E}e^{-\lambda f(\hat{X}_t)}\Big[\frac{\lambda^2}{2}(f'(\hat{X}_t))^2\hat{\sigma}^2(t,\hat{X}_t)-\lambda\Big(f'(\hat{X}_t)\hat{\mu}(t,\hat{X}_t)+\frac12 f''(\hat{X}_t)\hat{\sigma}^2(t,\hat{X}_t)\Big)\Big].
\end{align*}
The last identity along with assumption (\ref{fassump}) results in the  following Cauchy problem for $p$
\begin{equation*} \label{cau}
	\frac{\partial p}{\partial t}(t,\lambda) = \mathcal{A}_t p(t,\lambda) - V(t,\lambda)p(t,\lambda), \quad p(0,\lambda) = e^{-\lambda f(x)},
\end{equation*}
where $\mathcal{A}_t$ is the generator of $X$.
The assumption that  $(\sigma,\mu)$ is in $\mathcal{FK}$-class
finishes the proof.
\end{proof}
\begin{remark}\label{techremark-pod}
The following condition is sufficient for (\ref{fassump}) to hold:\\
 for every $x\geq 0,t\geq 0$ and $\lambda \geq 0$
 	\begin{align}
 	\begin{split}
 \label{pde}
  \frac{\lambda^2}{2}&f'(x)^2\hat{\sigma}^2(t,x)-\lambda\Big(f'(x)\hat{\mu}(t,x)+\frac12 f''(x)\hat{\sigma}^2(t,x)\Big) \\
&= \frac{\sigma^2(t,\lambda)}{2}f^2(x) - \mu(t,\lambda)f(x) - V(t,\lambda). 
 \end{split}
 	\end{align}
 \end{remark}
Let us recall that for $x>0,c >0$ the process
\begin{equation} \label{PGSCS}
X_t = x + \int_0^tX_udB_u - c\int_0^tX_u^2du
 \end{equation}
 is known in literature as a population growth in stochastic crowded environment (PGSCE), see   \cite[Ex. 5.15]{O}. 
 The properties and
difficulties associated with the study of distribution of PGSCE are described in \cite{JWII}. Using 
Theorem \ref{Lapf} we have
\begin{corollary} If $X$ is an PGSCE then for $\lambda\geq 0$
\begin{equation*}
	\mathbb{E}e^{-\frac{\lambda}{X_t}} = \mathbb{E}e^{-x\lambda e^{B_t+t/2}-cx\lambda\int_0^te^{B_u +u/2}du}.
\end{equation*}
\end{corollary}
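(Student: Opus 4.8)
The plan is to apply Theorem~\ref{Lapf} directly with the diffusion $\hat X$ taken to be the PGSCE process from \eqref{PGSCS} and the function $f(z)=1/z$, so that $p(t,\lambda)=\mathbb{E}e^{-\lambda/X_t}=\mathbb{E}e^{-\lambda f(\hat X_t)}$, and with a matching diffusion $X$ chosen so that \eqref{fassump} holds and $(\sigma,\mu)$ lies in the $\mathcal{FK}$-class. First I would record the coefficients of the PGSCE: here $\hat\sigma(t,z)=z$ and $\hat\mu(t,z)=-cz^{2}$, and $\hat X_0=x>0$. With $f(z)=1/z$ we have $f'(z)=-1/z^{2}$, $f''(z)=2/z^{3}$, so $f'(z)\hat\sigma^2(t,z)=-1$, $\tfrac12 f''(z)\hat\sigma^2(t,z)=1/z$, and $f'(z)\hat\mu(t,z)=c$. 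Plugging into the left side of the sufficient condition \eqref{pde} (Remark~\ref{techremark-pod}), the $z$-dependent terms are
\[
\frac{\lambda^{2}}{2}\cdot\frac{1}{z^{2}}-\lambda\Big(c+\frac1z\Big)
=\frac{\lambda^{2}}{2}f^{2}(z)-\lambda f(z)-c\lambda .
\]
Matching this against the right side $\tfrac{\sigma^2(t,\lambda)}{2}f^2(z)-\mu(t,\lambda)f(z)-V(t,\lambda)$ forces the choice
\[
\sigma^{2}(t,\lambda)=\lambda^{2},\qquad \mu(t,\lambda)=\lambda,\qquad V(t,\lambda)=c\lambda,
\]
i.e.\ $X$ is a geometric-Brownian-motion-type diffusion $dX_t=X_t\,dW_t+X_t\,dt$ with $X_0=f(x)=1/x$ — except that, to get the stated answer, I would instead scale and take $X_0=\lambda$ (by the structure of Theorem~\ref{tw:Lap}/\ref{Lapf} the initial point of $X$ carries the $\lambda$-dependence and the exponent sees $f(x)=1/x$ as a multiplicative constant); concretely $X_t=\lambda e^{B_t+t/2}$ solves $dX_t=X_t\,dB_t+X_t\,dt$. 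Then Theorem~\ref{Lapf} yields
\[
\mathbb{E}e^{-\lambda/X_t}=\mathbb{E}e^{-f(x)X_t-\int_0^tV(t-u,X_u)\,du}
=\mathbb{E}e^{-x^{-1}\cdot\lambda e^{B_t+t/2}-c\int_0^t X_u\,du},
\]
and since $X_u=\lambda e^{B_u+u/2}$ this is exactly $\mathbb{E}e^{-x\lambda e^{B_t+t/2}-cx\lambda\int_0^t e^{B_u+u/2}\,du}$ after relabelling $x\leftrightarrow x^{-1}$ consistently with the way $x$ enters $h(\lambda)=\mathbb{E}e^{-\lambda f(x)}$ — this bookkeeping of where the constants $x$ and $\lambda$ sit is the one place that needs care, and I would state the PGSCE with starting point $1/x$ (or equivalently verify the identity with $x$ replaced throughout) so the formula comes out as written.

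The remaining work is to check the hypotheses of Theorem~\ref{Lapf}. I would verify hypothesis H for $\hat X$ and $f(z)=1/z$: since the PGSCE stays strictly positive and in fact $1/X_t$ is itself a nice diffusion (indeed $Y_t:=1/X_t$ satisfies, by It\^o, $dY_t=-Y_t\,dB_t+c\,dt$, so $Y_t=e^{-B_t-t/2}(x^{-1}+c\int_0^t e^{B_s+s/2}\,ds)$ is explicit and has finite exponential moments of all orders because it is dominated by an exponential functional of Brownian motion), conditions (i)–(iv) and the integrability requirements \eqref{martprop}-type bounds in (ii)–(iii) follow from standard Brownian exponential-moment estimates, exactly as in the remark after the definition of hypothesis H. Next, $(\sigma,\mu)=(\lambda,\lambda)$ with $V(t,\lambda)=c\lambda$ and $h(\lambda)=\mathbb{E}e^{-\lambda f(x)}=e^{-\lambda/x}$: I would argue membership in the $\mathcal{FK}$-class by the same device used in Proposition~\ref{proposition2.14}, namely that $\mu,\sigma$ are time-homogeneous and linear and $V$ is linear, so it suffices to establish a moment bound of the form $\mathbb{E}\sup_{u\in[s,t]}X_u^{2m}\le C(1+\lambda^{2m})e^{C(t-s)}$, which for $X_u=\lambda e^{B_u+u/2}$ is immediate from $\mathbb{E}(\sup_{u\in[s,t]}e^{2mB_u})\le 2e^{2m^2 t}$ together with Doob's inequality; then one mimics the Feynman–Kac proof in \cite[Theorem 7.6, Section 5]{KS}. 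Finally I would note $\mathbb{E}e^{-f(x)X_t}=\mathbb{E}e^{-x^{-1}\lambda e^{B_t+t/2}}<\infty$ for $\lambda>0$, as required.

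The only genuine obstacle is the $\mathcal{FK}$-class verification, since the generator $\tfrac12\lambda^2\partial_\lambda^2+\lambda\partial_\lambda$ has unbounded coefficients and the classical Feynman–Kac hypotheses of \cite[Section 5.7]{KS} do not literally apply; but this is precisely the situation already handled in Proposition~\ref{proposition2.14}, and the boundedness of $p$ by $1$ (as a Laplace transform) plus the polynomial moment bound \eqref{sup0} are exactly what make the relaxed version go through. Everything else — the explicit solution of the characteristic ODE $y'=y$, giving $X_t=\lambda e^{B_t+t/2}$, and the identity $e^{\int_0^tX_u\,du}$ manipulations paralleling \eqref{EL} in Proposition~\ref{proposition2.14} — is routine. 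Hence the corollary follows.
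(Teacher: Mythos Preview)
Your approach is essentially the paper's own: apply Theorem~\ref{Lapf} with $f(z)=z^{-1}$, $\sigma(\lambda)=\mu(\lambda)=\lambda$, $V(\lambda)=c\lambda$, exactly as the paper's one-line proof does, and you correctly supply the details (verifying \eqref{pde}, hypothesis~H, and $\mathcal{FK}$-class membership via the moment bound device of Proposition~\ref{proposition2.14}) that the paper omits. Your observation about the $x\leftrightarrow 1/x$ bookkeeping is well taken---the paper's proof as written in fact yields $\mathbb{E}e^{-(\lambda/x)e^{B_t+t/2}-c\lambda\int_0^t e^{B_u+u/2}du}$ on the right, so the stated formula appears to contain a typo (or an implicit convention $\hat X_0=1/x$), and your suggested relabelling is the right fix.
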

\begin{proof} It follows from Theorem \ref{Lapf} with $\sigma(\lambda) = \mu(\lambda) = \lambda$, $V(\lambda) = c \lambda$ and $f(x) = x^{-1}$.
\end{proof}
From the last theorem we obtain an interesting result about marginals of some special diffusions. It turns out that for a diffusion $\hat{X}$ with coefficients satisfying a simple relation, presented below, we can find a function $f$ such that $f(\hat{X})$ is associated to a geometric Brownian motion. In fact it is a special case of Zvonikin's observation and the Doss-Sussmann method - for details see \cite[Sec. 27 Chapter 5]{RW}. We recall that a process $Y$ is associated to  a process $Z$, if $Y_t$ and $Z_t$ have the same distribution for all $t$.

\begin{proposition}\label{misigma}  Assume that coefficients of  diffusion $\hat{X}$ do not depend on time, satisfy
(\ref{martprop}) and
$\hat{\sigma}$ is a function in   $\mathcal{C}^1$ such that $|\hat{\sigma}|>0$. Let
\begin{equation}\label{1/8}
f(x) = e^{\int_a^x(\hat{\sigma}(z))^{-1}dz}
\end{equation}
be well defined for an arbitrarily chosen
point $a$. Assume that  $\hat{X}_0=x$. If for every $y\geq 0$
\begin{equation}\label{Dossalike}
	\hat{\mu}(y) = \frac{\hat{\sigma}(y)}{2}(1+\hat{\sigma}'(y)),
\end{equation}
and $f$ satisfies hypothesis H,
then $(f(\hat{X}_t),t\geq 0)$ is associated to $(f(x)e^{B_t+t/2},t\geq 0)$.
\end{proposition}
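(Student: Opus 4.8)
The plan is to apply Theorem~\ref{Lapf} with the matched diffusion $X$ chosen to be a geometric Brownian motion, i.e.\ to take in \eqref{tjX}
\[
\sigma(t,\lambda) = \lambda, \qquad \mu(t,\lambda) = \lambda, \qquad V \equiv 0 .
\]
With this choice the unique solution of \eqref{tjX} with $X_0=\lambda$ is $X_t = \lambda e^{B_t+t/2}$, and, since $\hat X_0=x$ is deterministic, $h(\lambda)=\mathbb{E}e^{-\lambda f(x)} = e^{-\lambda f(x)}$. Granting that the hypotheses of Theorem~\ref{Lapf} hold for these data, its conclusion reads
\[
\mathbb{E}e^{-\lambda f(\hat X_t)} \;=\; \mathbb{E}e^{-f(x)X_t} \;=\; \mathbb{E}e^{-\lambda\, f(x)\, e^{B_t+t/2}}, \qquad \lambda\ge 0,\ t\ge 0 .
\]
Because $f$ is strictly positive (it is an exponential) and $f(x)e^{B_t+t/2}>0$, equality of these two Laplace transforms on $[0,\infty)$ forces $f(\hat X_t)$ and $f(x)e^{B_t+t/2}$ to have the same law for each fixed $t$, which is exactly the claim that $(f(\hat X_t),t\ge 0)$ is associated to $(f(x)e^{B_t+t/2},t\ge 0)$.

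So everything reduces to verifying the hypotheses of Theorem~\ref{Lapf}. Hypothesis~H for $f$ is assumed; $\mathbb{E}e^{-f(x)X_t}<\infty$ is immediate since $X_t>0$ and $f(x)>0$ give $e^{-f(x)X_t}\le 1$. For the key relation \eqref{fassump} I would check instead the stronger pointwise identity \eqref{pde} of Remark~\ref{techremark-pod}, which for the present $(\sigma,\mu,V)$ becomes
\[
\frac{\lambda^2}{2}f'(x)^2\hat\sigma^2(x) - \lambda\Big(f'(x)\hat\mu(x) + \frac12 f''(x)\hat\sigma^2(x)\Big) \;=\; \frac{\lambda^2}{2}f(x)^2 - \lambda f(x) .
\]
From \eqref{1/8} one has $f'(x) = f(x)/\hat\sigma(x)$, hence $f'(x)^2\hat\sigma^2(x) = f(x)^2$ and the coefficients of $\lambda^2$ match (only $\hat\sigma^2$ enters, so the sign of $\hat\sigma$ is irrelevant); differentiating once more, $f''(x) = f(x)\hat\sigma(x)^{-2}\bigl(1-\hat\sigma'(x)\bigr)$, and equating the coefficients of $\lambda$ and cancelling $f(x)>0$ turns the identity into $\hat\mu(x)/\hat\sigma(x) = \frac12\bigl(1+\hat\sigma'(x)\bigr)$, i.e.\ into the hypothesis \eqref{Dossalike}. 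Thus \eqref{pde}, and hence \eqref{fassump}, holds precisely under the stated assumptions.

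The step I expect to be the main obstacle is showing that the pair $(\sigma,\mu)=(\mathrm{id},\mathrm{id})$ lies in $\mathcal{FK}$-class for $g\equiv 0$, $V\equiv 0$ and $h(\lambda)=e^{-\lambda f(x)}$ — that is, that the Cauchy problem $\frac{\partial p}{\partial t} = \frac12\lambda^2\frac{\partial^2 p}{\partial\lambda^2} + \lambda\frac{\partial p}{\partial\lambda}$, $p(0,\cdot)=h$, has a unique $\mathcal{C}^{1,2}$ solution with the Feynman--Kac representation along $X$. Since the coefficients are time-homogeneous and $V\equiv 0$, I would proceed exactly as in the proof of Proposition~\ref{proposition2.14}: the candidate $p(t,\lambda)=\mathbb{E}e^{-\lambda f(x)e^{B_t+t/2}}$ is bounded by $1$, the geometric Brownian motion satisfies a moment estimate $\mathbb{E}\sup_{u\in[s,t]}X_u^{2m}\le C(1+\lambda^{2m})e^{C(t-s)}$ (as in \eqref{sup0}) by the same elementary computation, and these two facts allow one to mimic the proof of the Feynman--Kac representation in \cite[Theorem~7.6, Section~5]{KS}, uniqueness in the relevant class following from boundedness of $h$. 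Once $\mathcal{FK}$-membership is secured, Theorem~\ref{Lapf} applies and the argument is complete.
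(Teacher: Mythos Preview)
Your proof is correct and follows essentially the same route as the paper: both apply Theorem~\ref{Lapf} with $\sigma=\mu=\mathrm{id}$, $V\equiv 0$, and verify the pointwise identity \eqref{pde} via the relations $f'\hat\sigma=f$ and $f'\hat\mu+\tfrac12 f''\hat\sigma^2=f$, which are equivalent to your computation of the $\lambda^2$- and $\lambda$-coefficients. You are in fact more careful than the paper in two respects: you spell out why equality of Laplace transforms on $[0,\infty)$ forces equality of the one-dimensional marginals, and you address the $\mathcal{FK}$-class membership for geometric Brownian motion (which the paper simply omits, writing only ``The assertion follows from Theorem~\ref{Lapf}'').
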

\begin{proof} Observe that for $f$ given by \eqref{1/8} and $\hat{\mu},\hat{\sigma}$ satisfying (\ref{Dossalike}) we have
\begin{align*}
	f = f' \hat{\mu} + \frac12 f'' \hat{\sigma}^2, \quad 	f = f' \hat{\sigma}.
\end{align*}
Thus, for $\lambda \geq 0, x \geq 0$,
\begin{align*}\label{diffcon}
	\frac{\lambda^2}{2}(f'(x))^2 \hat{\sigma}^2(x) - \lambda\Big(f'(x)\hat{\mu}(x) +\frac12 f''(x)\hat{\sigma}^2(x) \Big) = \frac{\lambda^2}{2}f^2(x) - \lambda f(x).
\end{align*}
So \eqref{pde}
is satisfied for $\sigma = \mu = id$ and $V\equiv 0$.
The assertion follows from Theorem \ref{Lapf}.
\end{proof}
Proposition \ref{misigma} gives a way to construct diffusions, which are associated to a geometric Brownian motion.
Usually having the form of this diffusion one can easily check that it is indeed a geometric Brownian motion. This fact is illustrated in the next example.
\begin{example}
Let $\hat{X}$ be given by
$$
	\hat{X}_t = x + 2\int_0^t\sqrt{\hat{X}_u}dB_u + \int_0^t \Big(\sqrt{\hat{X}_u}+1\Big)du .
$$
Then the function $f$ defined by \eqref{1/8} has the form $f(x) = e^{\sqrt{x}}$. It follows from Proposition \ref{misigma} that the process $Y_t = e^{\sqrt{\hat{X}_t}}$ is associated to a geometric Brownian motion.
\end{example}
Having established Proposition \ref{misigma}, the natural question arises - whether the equality between processes $(f(\hat{X}_t),t\geq 0)$ and $(f(x)e^{B_t+t/2},t\geq 0)$ holds only on the level of its marginals or it can be extended on the level of processes.
It turns out that
the equality holds on the level of processes. Namely
\begin{proposition}\label{eqprocess} Let $\hat{X}$ be a diffusion as in Proposition \ref{misigma} and $f$ be defined by \eqref{1/8}.
If \eqref{Dossalike} holds, then
\begin{equation}
(f(\hat{X}_t),t\geq 0) = (f(x)e^{B_t+t/2},t\geq 0).
\end{equation}
\end{proposition}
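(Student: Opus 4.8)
The plan is to apply It\^o's formula to $Y_t := f(\hat X_t)$ and show that it satisfies exactly the same stochastic differential equation as the geometric Brownian motion $Z_t := f(x)e^{B_t+t/2}$, driven by the \emph{same} Brownian motion $B$, with the same (deterministic) initial value $Y_0 = f(x) = Z_0$. Since the SDE for geometric Brownian motion has a unique strong solution, pathwise equality of the two processes follows. Concretely, I would first record the two algebraic identities established in the proof of Proposition \ref{misigma}, namely $f = f'\hat\mu + \tfrac12 f''\hat\sigma^2$ and $f = f'\hat\sigma$, which hold pointwise under hypothesis \eqref{Dossalike} and the definition \eqref{1/8} of $f$.

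Next I would compute, via It\^o's lemma applied to the $\mathcal C^2$ function $f$ and the semimartingale $\hat X$ solving \eqref{tjX-hat} (with time-independent coefficients),
\begin{equation*}
dY_t = f'(\hat X_t)\,\hat\sigma(\hat X_t)\,dB_t + \Big(f'(\hat X_t)\hat\mu(\hat X_t) + \tfrac12 f''(\hat X_t)\hat\sigma^2(\hat X_t)\Big)\,dt.
\end{equation*}
Substituting the two identities $f'(\hat X_t)\hat\sigma(\hat X_t) = f(\hat X_t) = Y_t$ and $f'(\hat X_t)\hat\mu(\hat X_t) + \tfrac12 f''(\hat X_t)\hat\sigma^2(\hat X_t) = f(\hat X_t) = Y_t$ turns this into $dY_t = Y_t\,dB_t + Y_t\,dt$, i.e. the geometric Brownian motion SDE $dY_t = Y_t(dB_t + dt)$ with $Y_0 = f(x)$. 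Meanwhile $Z_t = f(x)e^{B_t + t/2}$ solves the identical equation $dZ_t = Z_t\,dB_t + Z_t\,dt$, $Z_0 = f(x)$, again by It\^o's formula. By pathwise uniqueness for linear SDEs with Lipschitz-in-the-localized-sense coefficients (the equation $du = u\,dB + u\,dt$ has the unique solution $u_0 e^{B_t + t/2}$), we conclude $Y_t = Z_t$ for all $t\geq 0$, almost surely, as processes.

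The only subtlety — and the step I expect to require a little care rather than being purely routine — is the justification of the It\^o expansion itself: one must check that $f\in\mathcal C^2$ (clear from \eqref{1/8} since $\hat\sigma\in\mathcal C^1$ and $|\hat\sigma|>0$), that $\hat X$ stays in the domain where $f$ and the identities are valid, and that the stochastic integral $\int_0^t f'(\hat X_u)\hat\sigma(\hat X_u)\,dB_u$ is well defined; the latter follows from the integrability hypotheses \eqref{martprop} together with hypothesis H, which is already assumed in Proposition \ref{misigma}. One should also note that the It\^o term $f'(\hat X_u)\hat\sigma(\hat X_u)\,dB_u$ uses precisely the driving Brownian motion $B$ of $\hat X$, so the identification is genuinely at the level of processes on the original probability space, not merely in law; this is exactly the strengthening over Proposition \ref{misigma}. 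No new hypotheses beyond those of Proposition \ref{misigma} are needed.
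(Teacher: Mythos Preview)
Your proposal is correct and follows exactly the same approach as the paper: apply It\^o's lemma to $f(\hat X_t)$, use the identities $f=f'\hat\sigma$ and $f=f'\hat\mu+\tfrac12 f''\hat\sigma^2$ to obtain $df(\hat X_t)=f(\hat X_t)(dB_t+dt)$, and conclude by uniqueness for the geometric Brownian motion SDE. The paper's proof is a one-line version of what you wrote; your added remarks on well-posedness of the It\^o expansion and on pathwise uniqueness are reasonable elaborations but not new ideas.
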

\begin{proof} It is enough to observe that
(\ref{Dossalike}) and It\^o's lemma for $f(\hat{X}_t)$ imply
\begin{align*}
	df(\hat{X}_t) = f(\hat{X}_t)(dB_t + dt).
\end{align*}
\end{proof}
\begin{remark} It is known
that for $\sigma$ in $\mathcal{C}^2$ with bounded first and second derivatives and a some Lipschitz-continuous function $b$, the  SDE
\begin{equation} \label{Xeq}
	dX_t = \sigma(X_t)dB_t + \Big(b(X_t)+ \frac12 \sigma(X_t)\sigma'(X_t)\Big)dt
\end{equation}
has  a unique strong solution, which  can be written in the form
\begin{equation}
	X_t(\omega) = u(B_t(\omega),Y_t(\omega)),
\end{equation}
where $\omega\in\Omega$, $u$ is a some suitable continuous function, and the process $Y$ solves an ordinary differential equation for every $\omega\in\Omega$. In fact,
\begin{align*}
	\frac{\partial u}{\partial x} &= \sigma(u), \ u(0,y) = y,\\
	\frac{d}{dt}Y_t(\omega) &= h(B_t(\omega),Y_t(\omega)), \ Y_0(\omega) = X_0,
\end{align*}
where
$$  h(x,y) = b(u(x,y))e^{-\int_0^x\sigma'(u(z,y))dz}.
$$
For details see  \cite[Prop. 2.21 Chapter 5]{KS} or  \cite[Section 27 Chapter 5]{RW}. Theorem \ref{eqprocess}
gives a stronger result for $b = \sigma/2$. Namely, if $|\sigma| > 0$, then the unique strong solution of (\ref{Xeq}) is given by
\begin{equation*}
	(X_t,t\geq 0) = \Big(f^{-1}\Big(f(X_0)e^{B_t+t/2}\Big), t\geq 0\Big),
\end{equation*}
where $f(x) = e^{\int_a^x(\sigma(z))^{-1}dz}$.
\end{remark}
Next we use the matching diffusions technique to establish the Laplace transform of vector $(f(\hat{X}_t),\int_0^tf(\hat{X}_u)du)$ for some  functions $f$.
\begin{theorem}\label{ThV} Let $\hat{V}$ be a continuous nonnegative function and $\gamma \geq 0$. Under the assumptions of Theorem \ref{Lapf}, if  $\mathbb{E}\int_0^te^{-\lambda f(\hat{X}_u)}\hat{V}(\hat{X}_u)du<\infty$ for all $t \geq 0$
and the following identity
holds
\begin{align} \label{fassumpV}
\mathbb{E}&e^{-\lambda f(\hat{X}_t)}\Big[\frac{\lambda^2}{2}(f'(\hat{X}_t))^2\hat{\sigma}^2(t,\hat{X}_t)-\lambda\Big(f'(\hat{X}_t)\hat{\mu}(t,\hat{X}_t)+\frac12 f''(\hat{X}_t)\hat{\sigma}^2(t,\hat{X}_t)\Big)-\gamma \hat{V}(\hat{X}_t)\Big]\\
&= \mathbb{E}e^{-\lambda f(\hat{X}_t)}\Big[\frac{\sigma^2(t,\lambda)}{2}f^2(\hat{X}_t) - \mu(t,\lambda)f(\hat{X}_t) - V(t,\lambda)\Big]\notag,
\end{align}
then
\begin{equation}\label{DLT}
\mathbb{E}e^{-\lambda f(\hat{X}_t)-\gamma\int_0^t\hat{V}(\hat{X}_u)du} = \mathbb{E}e^{-f(x)X_t -\int_0^tV(t-u,X_u)du}.
\end{equation}
\end{theorem}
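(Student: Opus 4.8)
The plan is to mimic the proof of Theorem \ref{Lapf} almost verbatim, with the Feynman--Kac potential now absorbing the extra additive functional. First I would set $p(t,\lambda):=\mathbb{E}e^{-\lambda f(\hat{X}_t)-\gamma\int_0^t\hat{V}(\hat{X}_u)du}$ and apply It\^o's lemma to the integrand $e^{-\lambda f(\hat{X}_t)-\gamma\int_0^t\hat{V}(\hat{X}_u)du}$. The extra factor $e^{-\gamma\int_0^t\hat{V}(\hat{X}_u)du}$ is of finite variation, so it contributes no new quadratic-variation term; its contribution to the drift is $-\gamma\hat{V}(\hat{X}_t)$ times the integrand. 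The stochastic part is the same local martingale as in Theorem \ref{Lapf} multiplied by the bounded factor $e^{-\gamma\int_0^t\hat{V}(\hat{X}_u)du}\le 1$, so hypothesis H together with $\gamma\ge 0$ and nonnegativity of $\hat V$ still guarantees it is a true martingale. Using assumptions (i)--(iv) of hypothesis H and the new integrability hypothesis $\mathbb{E}\int_0^t e^{-\lambda f(\hat{X}_u)}\hat{V}(\hat{X}_u)\,du<\infty$ (which dominates the corresponding integral with the bridge-type exponential weight), I would justify differentiating under the expectation to get
\begin{align*}
\frac{\partial p}{\partial t}(t,\lambda) = \mathbb{E}e^{-\lambda f(\hat{X}_t)-\gamma\int_0^t\hat{V}(\hat{X}_u)du}\Big[\tfrac{\lambda^2}{2}(f'(\hat{X}_t))^2\hat{\sigma}^2(t,\hat{X}_t)-\lambda\big(f'(\hat{X}_t)\hat{\mu}(t,\hat{X}_t)+\tfrac12 f''(\hat{X}_t)\hat{\sigma}^2(t,\hat{X}_t)\big)-\gamma\hat{V}(\hat{X}_t)\Big].
\end{align*}

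Next I would invoke the matching identity \eqref{fassumpV}: its left-hand side is exactly the bracketed expression above but with the weight $e^{-\lambda f(\hat{X}_t)}$ in place of $e^{-\lambda f(\hat{X}_t)-\gamma\int_0^t\hat{V}(\hat{X}_u)du}$. Here I would need to argue that \eqref{fassumpV} in fact yields the same equality with the heavier weight --- the cleanest route is to note, as in Remark \ref{techremark-pod}, that \eqref{fassumpV} is designed to come from the pointwise PDE identity
\begin{align*}
\tfrac{\lambda^2}{2}f'(y)^2\hat{\sigma}^2(t,y)-\lambda\big(f'(y)\hat{\mu}(t,y)+\tfrac12 f''(y)\hat{\sigma}^2(t,y)\big)-\gamma\hat{V}(y) = \tfrac{\sigma^2(t,\lambda)}{2}f^2(y)-\mu(t,\lambda)f(y)-V(t,\lambda),
\end{align*}
valid for all $y$, which can be multiplied by any nonnegative random weight and integrated. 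Granting this, one obtains
\begin{align*}
\frac{\partial p}{\partial t}(t,\lambda) = \mathcal{A}_t p(t,\lambda) - V(t,\lambda)p(t,\lambda),\qquad p(0,\lambda) = e^{-\lambda f(x)},
\end{align*}
where $\mathcal{A}_t=\tfrac12\sigma^2(t,\cdot)\partial_{\lambda}^2+\mu(t,\cdot)\partial_\lambda$ is the generator of $X$, and $p\in\mathcal{C}^{1,2}$ by the same smoothness argument as in Theorem \ref{tw:Lap}.

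Finally, since $(\sigma,\mu)$ is in $\mathcal{FK}$-class for the potential $V$ and the initial datum $h(\lambda)=e^{-\lambda f(x)}=\mathbb{E}e^{-\lambda f(x)}$ (note $f(x)$ is a constant here, so $h=\mathbb{E}e^{-\lambda f(x)}$), the probabilistic representation built into the definition of $\mathcal{FK}$-class (with $g\equiv0$) gives $p(t,\lambda)=\widetilde{\mathbb{E}}\,e^{-f(x)X_t-\int_0^tV(t-u,X_u)du}$, which is \eqref{DLT}. The main obstacle is the step bridging \eqref{fassumpV} to the version with the extra functional weight: \eqref{fassumpV} as literally stated carries only the weight $e^{-\lambda f(\hat X_t)}$, so either one assumes the stronger pointwise form \eqref{pde}-type identity (as Remark \ref{techremark-pod} effectively does and as all the worked examples satisfy), or one observes that on the support of the law of $\hat X_t$ the two sides of \eqref{fassumpV} agree as functions and hence agree after inserting the bounded density change $e^{-\gamma\int_0^t\hat V(\hat X_u)du}$ conditionally on $\hat X_t$ — this conditioning argument is the only genuinely new point relative to the proof of Theorem \ref{Lapf}, the rest being a routine rerun. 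Verifying the martingale property and the differentiation-under-the-integral in the presence of the additive functional is straightforward given $\gamma\ge0$, $\hat V\ge0$, and the added integrability hypothesis.
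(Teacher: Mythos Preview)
Your approach is exactly the paper's: set $p(t,\lambda)=\mathbb{E}e^{-\lambda f(\hat{X}_t)-\gamma\int_0^t\hat{V}(\hat{X}_u)du}$, apply It\^o's lemma, take expectations, identify the Cauchy problem $\partial_t p=\mathcal{A}_t p-Vp$ with initial datum $e^{-\lambda f(x)}$, and invoke the $\mathcal{FK}$-class assumption. The paper's proof is a three-line sketch that says precisely this.

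You are right, however, to flag the weight mismatch: after It\^o's lemma the bracket is weighted by $e^{-\lambda f(\hat{X}_t)-\gamma\int_0^t\hat{V}(\hat{X}_u)du}$, whereas \eqref{fassumpV} is stated with weight $e^{-\lambda f(\hat{X}_t)}$ only. The paper's proof does not address this explicitly; it simply writes ``from It\^o's lemma and assumptions'' and passes directly to the Cauchy problem. Your proposed conditioning argument does not in general close the gap, since \eqref{fassumpV} asserts vanishing of a single expectation and does not force the bracketed difference to vanish on the support of $\hat X_t$ (the bracket depends on $\lambda$ both through the weight and through $\sigma,\mu,V$). The genuine resolution is the one you also mention: assume the pointwise identity of Remark~\ref{techremark}, which is what every application in the paper actually verifies. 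With that pointwise hypothesis the weight can be replaced freely and the rest of your argument, including the martingale and integrability checks, goes through verbatim.
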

 \begin{proof} Define $p(t,\lambda) = \mathbb{E}e^{-\lambda f(\hat{X}_t)-\gamma\int_0^t\hat{V}(\hat{X}_u)du}$. The proof goes in the same way as in Theorem \ref{Lapf}.
 From It\^o's lemma and assumptions we obtain
 \begin{align*}
 	\frac{\partial p}{\partial t} = \mathcal{A}_t p - Vp, \ p(0,\lambda) = e^{-\lambda f(x)},
 \end{align*}
 where $\mathcal{A}_t$ is the generator of $X$. Now, the proof follows
 from assumption that  $(\sigma,\mu)$ is in $\mathcal{FK}$-class.
 \end{proof}
 \begin{remark}\label{techremark}
The following condition is sufficient for (\ref{fassumpV}) to hold:\\
 For every $x\geq 0,t\geq 0$ and $\lambda \geq 0$
 	\begin{align}\label{pdeV}
 	 \frac{\lambda^2}{2}&f'(x)^2\hat{\sigma}^2(t,x)-\lambda\Big(f'(x)\hat{\mu}(t,x)+\frac12 f''(x)\hat{\sigma}^2(t,x)\Big)-\gamma \hat{V}(x)\\
&= \frac{\sigma^2(t,\lambda)}{2}f^2(x) - \mu(t,\lambda)f(x) - V(t,\lambda). \notag
 	\end{align}
 \end{remark}
 \begin{remark}\label{affine}
In Theorem \ref{ThV} we present the Laplace transform of a vector $(f(X_t),\int_0^t V(X_u)du)$ for some functions $f,V$ and diffusions $X$.
In particular, a special case of this considerations is the problem of finding the bond prices in models with an affine term structure of interest rate, see e.g. Duffie and Kan \cite{DK}. Connections between affine processes and Riccati equations had been studied in Duffie, Filipovi\'{c} and Schachermayer \cite{DFS}. In \cite{DK}, the modeling an affine term structure relied on the numerical techniques (fourth-order Runge-Kutta method). In Boyle, Tian and Guan \cite{BTG}, again in context of an affine term structure, the Kovacic's algorithm had been implemented to obtain analytical formula in a Liouvillian extension of the rational function field sense.
  Our methodology, touching Riccati equations as well, allows elegantly revisit the subject of affine processes (\cite{DFS}) and give closed formulae for Laplace transforms differ from that obtained in \cite{DK} and \cite{BTG}.
\end{remark}
 \begin{corollary} \label{jj1} Let $\hat{X}$ and $f$ be as in Proposition \ref{misigma}. If \eqref{Dossalike} holds,
then
\begin{equation*}
	\mathbb{E}e^{-\lambda f(\hat{X}_t)-\gamma\int_0^tf(\hat{X}_u)du} = \mathbb{E}e^{-f(x)X_t},
\end{equation*}
where
\begin{equation}\label{X}
	X_t = e^{B_t +t/2}\Big(\lambda + \gamma\int_0^te^{-B_s-s/2}ds\Big).
\end{equation}
 \end{corollary}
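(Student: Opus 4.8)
The plan is to obtain the corollary as an instance of Theorem~\ref{ThV}, with the matching diffusion $X$ taken to be a geometric Brownian motion perturbed by a constant drift. The starting point is the pair of pointwise identities already extracted in the proof of Proposition~\ref{misigma}: for $f$ defined by \eqref{1/8} and $\hat\mu,\hat\sigma$ satisfying \eqref{Dossalike} one has $f=f'\hat\sigma$ and $f=f'\hat\mu+\tfrac12 f''\hat\sigma^{2}$ everywhere. I would then set $\sigma(\lambda)=\lambda$, $\mu(\lambda)=\lambda+\gamma$, $\hat V=f$ and $V\equiv 0$, and check the sufficient condition \eqref{pdeV} of Remark~\ref{techremark}. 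Substituting the two identities, the left-hand side of \eqref{pdeV} becomes $\tfrac{\lambda^{2}}{2}f^{2}(x)-\lambda f(x)-\gamma f(x)$, which coincides with the right-hand side $\tfrac{\sigma^{2}(t,\lambda)}{2}f^{2}(x)-\mu(t,\lambda)f(x)-V(t,\lambda)$ for the above choice; hence \eqref{fassumpV} holds by Remark~\ref{techremark}.

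Next I would identify the diffusion $X$ associated with $(\sigma,\mu)$ and started at $\lambda$: it solves $dX_t=X_t\,dB_t+(X_t+\gamma)\,dt$, $X_0=\lambda$. Writing $Z_t=e^{B_t+t/2}$, so that $dZ_t=Z_t\,dB_t+Z_t\,dt$, an application of It\^o's product rule to $Z_t\big(\lambda+\gamma\int_0^t Z_s^{-1}\,ds\big)$ shows that this process solves the SDE, which gives the closed form \eqref{X}. In particular $X_t\ge 0$ for $\lambda,\gamma\ge 0$, so $p(t,\lambda):=\mathbb{E}e^{-f(x)X_t}$ is bounded by $1$ and finite, and the integrability requirements $\mathbb{E}\int_0^t e^{-\lambda f(\hat X_u)}f(\hat X_u)\,du<\infty$ and $\mathbb{E}e^{-f(x)X_t}<\infty$ of Theorem~\ref{ThV} follow from hypothesis~H (assumed in Proposition~\ref{misigma}) together with the boundedness of $z\mapsto ze^{-\lambda z}$ on $[0,\infty)$.

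The main obstacle is verifying that $(\sigma,\mu)$ belongs to the $\mathcal{FK}$-class for $h(\lambda)=e^{-\lambda f(x)}$, i.e.\ that the Cauchy problem $\partial_t p=\mathcal{A}_t p$, $p(0,\lambda)=e^{-\lambda f(x)}$, has a unique solution admitting the representation $p(t,\lambda)=\mathbb{E}e^{-f(x)X_t}$. Since $\sigma,\mu$ are time-independent and $V\equiv 0$, I would reproduce the argument used for Proposition~\ref{proposition2.14}: establish a bound of the form $\mathbb{E}\sup_{u\in[s,t]}X_u^{2m}\le C(1+\lambda^{2m})e^{C(t-s)}$, using that both $\sup_{u\le t}e^{B_u}$ and $\int_0^t e^{-B_s-s/2}\,ds$ have finite moments of every order, and then, exploiting that $p$ is bounded, mimic the proof of the Feynman--Kac representation in \cite[Theorem~7.6, Section~5]{KS}. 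Once $(\sigma,\mu)\in\mathcal{FK}$ is in place, Theorem~\ref{ThV} applied with this $\gamma$, $\hat V=f$ and the above $\sigma,\mu,V$ yields $\mathbb{E}e^{-\lambda f(\hat X_t)-\gamma\int_0^t f(\hat X_u)\,du}=\mathbb{E}e^{-f(x)X_t}$ with $X$ given by \eqref{X}, which is the assertion.
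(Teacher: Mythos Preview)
Your proposal is correct and follows essentially the same approach as the paper: apply Theorem~\ref{ThV} with $\sigma(\lambda)=\lambda$, $\mu(\lambda)=\lambda+\gamma$, $\hat V=f$, $V\equiv 0$, verify \eqref{pdeV} via the identities $f=f'\hat\sigma$ and $f=f'\hat\mu+\tfrac12 f''\hat\sigma^{2}$, and then solve the resulting linear SDE $dX_t=X_t(dB_t+dt)+\gamma\,dt$ to obtain \eqref{X}. The paper's proof is terser and does not spell out the $\mathcal{FK}$-class verification or the integrability checks that you include, but the underlying argument is the same.
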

 \begin{proof} The proof follows from Theorem \ref{ThV} with $\sigma(\lambda) = \lambda$, $\mu(\lambda) =\gamma + \lambda$, $V\equiv 0$ and $\hat{V} = f$. Then
 \begin{align*}
 	dX_t = X_td(B_t +t) +\gamma dt, \ X_0 = \lambda,
 \end{align*}
and this simple linear SDE has the solution given by (\ref{X}).
 \end{proof}
 \begin{remark}
 From Corollary \ref{jj1} or  directly from Proposition \ref{eqprocess} we have, for fixed $t\geq 0$,
\begin{equation*} \label{Xlaw}
	\Big( f(\hat{X}_t),\int_0^tf(\hat{X}_u)du\Big) \stackrel{(law)}{=}\Big(f(x)e^{B_t +t/2}, f(x)\int_0^te^{B_s+s/2}ds\Big).
\end{equation*}
  \end{remark}
 \begin{example} Let us consider  the PGSCE process $\hat{X}$ given by \eqref{PGSCS}. We apply Theorem \ref{ThV} for $f(x)= 1/x$, $\hat{V} = f$, $\mu(\lambda) = \lambda +\gamma$, $\sigma(\lambda) = \lambda$ and
 $V(\lambda) = c\lambda$. Observe that for these functions (\ref{pdeV}) holds. Then, we have
 \begin{equation*}\label{DLTPG}
 \mathbb{E}e^{-\lambda/ \hat{X}_t-\gamma\int_0^t(\hat{X}_u)^{-1}du} = \mathbb{E}e^{-X_t/x -c\int_0^tX_udu},
 \end{equation*}
 where $X$ is given by  (\ref{X}). Hence
 \begin{align*}
 	&\Big((\hat{X}_t)^{-1}, \int_0^t(\hat{X}_u)^{-1}du\Big)\stackrel{(law)}{=}  \\
 	\Big( \frac{e^{B_t +\frac{t}{2}}}{x}+ c\int_0^te^{B_s+\frac{s}{2}}ds,\ & \frac{e^{B_t +\frac{t}{2}}}{x}\int_0^te^{-B_s-\frac{s}{2}}ds +c\int_0^te^{B_u+\frac{u}{2}}\int_0^ue^{-B_s-\frac{s}{2}}dsdu\Big).\notag
 \end{align*}
 \end{example}
Let us  observe that if $\hat{V} = f$ and condition (\ref{pde}) is satisfied for  $\hat{\sigma},\hat{\mu},\sigma$ and $\mu$, then  (\ref{pdeV}) is satisfied for $\hat{\sigma},\hat{\mu},\sigma$ and $\mu + \gamma$. This observation leads us to conclusion that to obtain the PDE for Laplace transform of vector $(\hat{X}_t,\int_0^t\hat{X}_udu)$, where $\hat{X}$ belongs to the family of generalized squared Bessel processes
we have to change $\mu$ by adding a constant $\gamma> 0$ in PDE for Laplace transform of $\hat{X}_t$. In result, if $p(t,\lambda) = \mathbb{E}e^{-\lambda \hat{X}_t-\gamma\int_0^t\hat{X}_udu}$, then
\begin{align}\label{vectorLap}
		\frac{\partial p}{\partial t} = \Big(\gamma-2\lambda^2 + 2\lambda\theta(t)\Big)\frac{\partial p}{\partial \lambda} -\Delta(t)\lambda p, \  p(0,\lambda) = e^{-x\lambda}, \ x\geq 0.
\end{align}
If conditions (\ref{techcon1}) and (\ref{techcon1b}) from Proposition \ref{CCPI} are satisfied for $\mu(t,\lambda) = \gamma-2\lambda^2 + 2\lambda\theta(t)$ and $V(t,\lambda) = \Delta(t)\lambda$, then we can use matching diffusion technique. However, it may happen that $(0,\mu)\notin\mathcal{FK}$. Below we omit this difficulty and present a proposition
giving Laplace transform of vector $(\hat{X}_t,\int_0^t\hat{X}_udu)$ for $\hat{X}$ belonging to the generalized squared Bessel process family. Having the conjecture of the form of this Laplace transform, we are able using techniques exploited before to obtain close formulae for vector's Laplace transforms.

\begin{proposition}\label{Lapvec} Let $\Delta:[0,\infty)\rightarrow [0,\infty)$ and $\theta:[0,\infty)\rightarrow (-\infty,0]$ be continuous functions and $\hat{X}$ be a generalized squared Bessel process II given by (\ref{GBPII}). Fix $\gamma>0$. Let
$y_{x_0}$ be a solution of general Riccati equation
\begin{align*}
	y'(t) = 2y^2(t) - 2y(t)\theta(t) - \gamma, \ y(0) = x_0\in\mathbb{R}.
\end{align*}
Let $ (t,\lambda)\mapsto x_0(t,\lambda)$ be a function  in $\mathcal{C}^1$ such that $y_{x_0}(t)\Big|_{x_0 = x_0(t,\lambda)}=\lambda$ and $|\frac{\partial x_0(t,\lambda)}{\partial\lambda}| > 0$. If the  function $x_0\mapsto y_{x_0}(t)$ is in $\mathcal{C}^1$,  then the unique solution of (\ref{vectorLap}), i.e. the joint Laplace transform of vector $(\hat{X}_t,\int_0^t\hat{X}_udu)$ for $\hat{X}$ being a generalized squared Bessel process, is given by
\begin{align*}
	p(t,\lambda) = e^{-x\phi(t,\lambda)-\psi(t,\lambda)},
\end{align*}
where
\begin{align*}
	\phi(t,\lambda) = x_0(t,\lambda), \ \psi(t,\lambda) = \int_0^t\Delta(u)y_{x_0}(u)du\Big|_{x_0 = x_0(t,\lambda)}.
\end{align*}
\end{proposition}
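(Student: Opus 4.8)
The plan is to solve the first order linear PDE \eqref{vectorLap} by the method of characteristics, to recognize the resulting closed form as $p(t,\lambda)=e^{-x\phi(t,\lambda)-\psi(t,\lambda)}$ with $\phi,\psi$ as in the statement, and then to combine the uniqueness furnished by Lemma \ref{unique} with the It\^o computation described just before the proposition in order to identify this solution with the joint Laplace transform of $\bigl(\hat X_t,\int_0^t\hat X_u\,du\bigr)$.

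First I would set up the characteristics. Writing $c(t,\lambda)=\gamma-2\lambda^2+2\lambda\theta(t)$, equation \eqref{vectorLap} reads $\partial_t p-c(t,\lambda)\partial_\lambda p=-\Delta(t)\lambda\,p$, so its characteristic curves solve $\lambda'(s)=-c(s,\lambda(s))=2\lambda(s)^2-2\lambda(s)\theta(s)-\gamma$, which is exactly the general Riccati equation defining $y_{x_0}$; hence the characteristic issuing from $\lambda(0)=x_0$ is $s\mapsto y_{x_0}(s)$. Along it $\frac{d}{ds}p(s,y_{x_0}(s))=-\Delta(s)y_{x_0}(s)\,p(s,y_{x_0}(s))$, so $p(t,y_{x_0}(t))=e^{-x x_0}\exp\bigl(-\int_0^t\Delta(u)y_{x_0}(u)\,du\bigr)$. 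Given $(t,\lambda)$, the hypotheses provide $x_0=x_0(t,\lambda)$ with $y_{x_0}(t)=\lambda$, which yields the candidate $p(t,\lambda)=e^{-x\,x_0(t,\lambda)-\psi(t,\lambda)}$ with $\psi$ exactly as stated; moreover $y_{x_0}(0)=x_0$ forces $x_0(0,\lambda)=\lambda$ and $\psi(0,\lambda)=0$, so the initial condition $p(0,\lambda)=e^{-x\lambda}$ holds.

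To make this rigorous I would check that the candidate is genuinely a classical solution. Setting $Y(t,x_0):=y_{x_0}(t)$ and using the assumed $\mathcal C^1$ regularity of $x_0\mapsto Y(t,x_0)$ and of $(t,\lambda)\mapsto x_0(t,\lambda)$, I would differentiate the identity $Y(t,x_0(t,\lambda))=\lambda$ in $\lambda$ and in $t$; with $\partial_t Y(t,x_0)=2Y^2-2Y\theta(t)-\gamma$ this gives $\partial_\lambda x_0=(\partial_{x_0}Y)^{-1}$ and $\partial_t x_0=c(t,\lambda)\,\partial_\lambda x_0$, i.e. $\phi:=x_0(t,\lambda)$ solves the homogeneous transport equation $\partial_t\phi=c(t,\lambda)\partial_\lambda\phi$, $\phi(0,\lambda)=\lambda$, which is the analogue of \eqref{eq:fi} with $\mu$ shifted by $\gamma$. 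Differentiating $\psi(t,\lambda)=\int_0^t\Delta(u)Y(u,x_0)\,du\big|_{x_0=x_0(t,\lambda)}$ under the integral sign --- legitimate by the $\mathcal C^1$ dependence of the Riccati flow on its initial value --- the same computation yields $\partial_t\psi=c(t,\lambda)\partial_\lambda\psi+\Delta(t)\lambda$, $\psi(0,\lambda)=0$, the analogue of \eqref{eq:pfi}. Substituting $p=e^{-x\phi-\psi}$ then shows $\partial_t p=c(t,\lambda)\partial_\lambda p-\Delta(t)\lambda\,p$, so $p$ solves \eqref{vectorLap}. Uniqueness follows from Lemma \ref{unique} with $d(t)=\gamma$, $a(t)\equiv-2$, $b(t)=2\theta(t)$, since $a,b\le0$ (because $\theta\le0$), $a^2+b^2>0$, $V(t,\lambda)=\Delta(t)\lambda\ge0$ is continuous and $h(\lambda)=e^{-x\lambda}$ is nonnegative and continuous. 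Finally, as noted before the statement, It\^o's lemma applied to $e^{-\lambda\hat X_t-\gamma\int_0^t\hat X_u\,du}$ together with \eqref{martprop}, the integrability of squared Bessel processes, and differentiation under the expectation in $\lambda$ shows that $q(t,\lambda):=\mathbb E e^{-\lambda\hat X_t-\gamma\int_0^t\hat X_u\,du}$ is a classical solution of \eqref{vectorLap}; by uniqueness $q=p$, which is the assertion.

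The main obstacle is bookkeeping rather than conceptual. One must ensure that $y_{x_0}$ does not explode before time $t$ and that the implicit function $x_0(t,\lambda)$ exists with $\partial_\lambda x_0\neq0$ --- which are precisely the regularity hypotheses one is entitled to assume --- and one must justify the two differentiations, under the integral in $\psi$ and under the expectation defining $q$; both are routine once the smooth dependence of ODE flows on initial data and the moment estimates for squared Bessel processes are recalled. The only genuine insight is the observation that the characteristic ODE of \eqref{vectorLap} is exactly the stated general Riccati equation $y'=2y^2-2y\theta-\gamma$.
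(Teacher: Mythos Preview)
Your proposal is correct and follows essentially the same route as the paper: both arguments reduce the problem to verifying that $\phi=x_0(t,\lambda)$ and $\psi$ satisfy the transport equations \eqref{j310}--\eqref{j320} by differentiating the implicit relation $Y(t,x_0(t,\lambda))=\lambda$, and then invoke Lemma \ref{unique} for uniqueness. Your framing via the method of characteristics makes the appearance of the Riccati equation more transparent, and your explicit remark that the Laplace transform itself must be shown to solve \eqref{vectorLap} via It\^o's lemma is a point the paper handles in the paragraph preceding the proposition rather than inside the proof; but the substance is the same.
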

\begin{proof} Uniqueness of solution of (\ref{vectorLap}) follows from Lemma \ref{unique}, so it is enough to prove that $p(t,\lambda) = e^{-x\phi(t,\lambda)-\psi(t,\lambda)}$ is the solution of \eqref{vectorLap}. As in the proof of Proposition \ref{GSBPII}, it is enough to prove that
\begin{align} \label{j310}
 \frac{\partial\phi}{\partial t} &= \Big(\gamma-2\lambda^2 + 2\lambda\theta(t)\Big)\frac{\partial\phi}{\partial \lambda}, \quad \phi(0,\lambda) = \lambda, \\
 \label{j320}
 \frac{\partial\psi}{\partial t} &= \Big(\gamma-2\lambda^2 + 2\lambda\theta(t)\Big)\frac{\partial\psi}{\partial \lambda} + \Delta(t)\lambda, \quad \psi(0,\lambda) = 0.
\end{align}
Since $\phi(t,\lambda) = x_0(t,\lambda)$, by  definition of $\phi$, we have $\frac{\partial\phi(t,\lambda)}{\partial t} = \frac{\partial x_0(t,\lambda)}{\partial t}$ and $\frac{\partial\phi(t,\lambda)}{\partial \lambda} = \frac{\partial x_0(t,\lambda)}{\partial \lambda}$.
 Let $u(t,z) = y_{z}(t)$. Then
$u(t,x_0(t,\lambda)) = \lambda$ and
\begin{align*}
&\frac{\partial u (t,x_0(t,\lambda))}{\partial t} + \frac{\partial u(t,x_0(t,\lambda))}{\partial x_0}\frac{\partial x_0(t,\lambda)}{\partial t} = 0 , \\
&\frac{\partial u(t,x_0(t,\lambda))}{\partial x_0}\frac{\partial x_0(t,\lambda)}{\partial \lambda} = 1.
\end{align*}
From the definition of $y_{x_0}$ we have
$$
	\frac{\partial u (t,x_0(t,\lambda))}{\partial t} = y_{x_0}'(t)\Big|_{x_0 = x_0(t,\lambda)} = 2\lambda^2 -2\lambda\theta(t) - \gamma.
$$
Thus, from the last three expressions we obtain
\begin{equation}\label{eqx0}
	\frac{\partial x_0(t,\lambda)}{\partial t} = (\gamma -2\lambda^2 +2\lambda\theta(t))\frac{\partial x_0(t,\lambda)}{\partial \lambda},
\end{equation}
so \eqref{j310} is satisfied. Observe that $u(0,x_0) = y_{x_0}(0) = x_0$ and $x_0(0,\lambda) = u(0,x_0(0,\lambda)) = \lambda$.
Let $v(t,x_0) = \int_0^t\Delta(u)y_{x_0}(u)du$. Then $\psi(t,\lambda) = v(t,x_0(t,\lambda))$ \\
and
\begin{align*}
\frac{\partial \psi (t,\lambda)}{\partial t} = \frac{\partial v (t,x_0(t,\lambda))}{\partial t} + \frac{\partial v(t,x_0(t,\lambda))}{\partial x_0}\frac{\partial x_0(t,\lambda)}{\partial t}.
\end{align*}
Moreover, $\frac{\partial v (t,x_0))}{\partial t} = \Delta(t)y_{x_0}(t)$. In result, from the last two expressions and from (\ref{eqx0}) we obtain
$$
	\frac{\partial \psi (t,\lambda)}{\partial t} = \Delta(t)\lambda + (\gamma -2\lambda^2 +2\lambda\theta(t))\frac{\partial \psi (t,\lambda)}{\partial \lambda}.
$$
It is clear from definition that $\psi(0,\lambda) = 0$. Thus  \eqref{j320} is satisfied, which finishes the proof.
\end{proof}
In the next propositions we consider generalized squared Bessel processes for which assumptions of Proposition \ref{Lapvec} are satisfied.

\begin{proposition}(Generalized squared Bessel process I) \label{GDLP} Let $\lambda\geq 0, \gamma > 0$ and $\hat{X}$ be a generalized squared Bessel process  given by  (\ref{GBP}).
Then
\begin{equation*}
\mathbb{E}e^{-\lambda \hat{X}_t-\gamma\int_0^t\hat{X}_udu} = e^{-x\phi(t,\lambda)-\psi(t,\lambda)},
\end{equation*}
where
\begin{align*}
	\phi(t,\lambda) &= \alpha\Big[1+\frac{2(\lambda-\alpha)}{2\alpha e^{4\alpha t}-(\lambda-\alpha)(1-e^{4\alpha t})}\Big], \quad \alpha = \sqrt{\gamma/2},\\
	\psi(t,\lambda) &= \int_0^t\Delta(u)y_{x_0}(u)du\Big|_{x_0 = \phi(t,\lambda)},\\
	y_{x_0}(t) &= \alpha\Big[1 + \frac{2(x_0-\alpha)e^{4\alpha t}}{2\alpha +(x_0-\alpha)(1-e^{4\alpha t})}\Big].
\end{align*}
\end{proposition}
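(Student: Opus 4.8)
The plan is to obtain this proposition as the special case $\theta\equiv 0$ of Proposition~\ref{Lapvec}. With $\theta\equiv 0$ the defining SDE \eqref{GBPII} reduces exactly to \eqref{GBP}, so $\hat X$ is a generalized squared Bessel process II with $\theta\equiv 0$, and the joint Laplace transform $p(t,\lambda)=\mathbb Ee^{-\lambda\hat X_t-\gamma\int_0^t\hat X_u\,du}$ is the unique solution of \eqref{vectorLap} with $\theta\equiv 0$; uniqueness is Lemma~\ref{unique} applied with $d(t)=\gamma$, $a(t)=-2$, $b(t)=0$ and $V(t,\lambda)=\Delta(t)\lambda$ (note $a,b\le 0$ and $a^2+b^2>0$). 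Hence it only remains to solve explicitly the Riccati equation figuring in Proposition~\ref{Lapvec}, which here is $y'(t)=2y^2(t)-\gamma$, $y(0)=x_0$, to invert the map $x_0\mapsto y_{x_0}(t)$, and to check the regularity hypotheses of that proposition.

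First I would put $\alpha=\sqrt{\gamma/2}$, so that the equation reads $y'=2(y-\alpha)(y+\alpha)$. Separating variables and integrating by partial fractions gives $\frac{y-\alpha}{y+\alpha}=\frac{x_0-\alpha}{x_0+\alpha}\,e^{4\alpha t}$, and solving for $y$ yields
\[
y_{x_0}(t)=\alpha\,\frac{(x_0+\alpha)+(x_0-\alpha)e^{4\alpha t}}{(x_0+\alpha)-(x_0-\alpha)e^{4\alpha t}} ,
\]
which, after clearing denominators, is precisely the form displayed in the statement. Imposing $y_{x_0}(t)=\lambda$ and solving for $x_0$ by the same algebra gives $x_0(t,\lambda)=\alpha\frac{(\lambda+\alpha)e^{4\alpha t}+(\lambda-\alpha)}{(\lambda+\alpha)e^{4\alpha t}-(\lambda-\alpha)}$, which is rearranged into the claimed $\phi(t,\lambda)$. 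Proposition~\ref{Lapvec} then delivers $p(t,\lambda)=e^{-x\phi(t,\lambda)-\psi(t,\lambda)}$ with $\psi(t,\lambda)=\int_0^t\Delta(u)y_{x_0}(u)\,du\big|_{x_0=\phi(t,\lambda)}$, which is the assertion.

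The only real work is checking the nondegeneracy conditions required by Proposition~\ref{Lapvec}. The denominator $D(t,\lambda)=(\lambda+\alpha)e^{4\alpha t}-(\lambda-\alpha)$ is strictly positive for $\lambda\ge 0$, $t\ge 0$ (a sum of positive terms when $\lambda<\alpha$, and $\ge 2\alpha>0$ when $\lambda\ge\alpha$), so $\phi$ is $\mathcal C^\infty$ on $[0,\infty)^2$; differentiating, $\partial_\lambda\phi=4\alpha^2 e^{4\alpha t}D(t,\lambda)^{-2}>0$, so $|\partial x_0(t,\lambda)/\partial\lambda|>0$. Also $x_0\mapsto y_{x_0}(t)$ is $\mathcal C^1$, either directly from the rational expression above or from smooth dependence of ODE solutions on initial data together with the variational identity $\partial_{x_0}y_{x_0}(t)=\exp\!\big(4\int_0^t y_{x_0}(s)\,ds\big)>0$; and a short computation shows that for $x_0=\phi(t,\lambda)$ the flow $y_{x_0}(\cdot)$ stays finite on $[0,t]$ (its blow-up time equals $t+\frac1{4\alpha}\log\frac{\lambda+\alpha}{\lambda-\alpha}>t$ when $\lambda>\alpha$, and there is no blow-up when $\lambda\le\alpha$), so $\psi$ is well defined. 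I expect the main obstacle to be purely bookkeeping: carrying out the two algebraic identifications of $y_{x_0}$ and of $\phi$ with the forms in the statement, and verifying these nondegeneracy conditions carefully so that Proposition~\ref{Lapvec} genuinely applies.
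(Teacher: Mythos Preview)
Your proposal is correct and follows essentially the same approach as the paper: specialize Proposition~\ref{Lapvec} to $\theta\equiv 0$, solve the Riccati equation $y'=2y^2-2\alpha^2$ explicitly, invert to obtain $x_0(t,\lambda)=\phi(t,\lambda)$, and check $\partial_\lambda x_0>0$. If anything, you are more careful than the paper, which simply records the special Riccati solution, the inversion, and the positivity of $\partial_\lambda x_0$; your additional verification that the denominator $D(t,\lambda)$ never vanishes and that the flow $y_{x_0}(\cdot)$ remains finite on $[0,t]$ for $x_0=\phi(t,\lambda)$ fills in details the paper leaves implicit.
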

\begin{proof}
The result follows from Proposition \ref{Lapvec}. We are using notation form that proposition. For $\gamma = 2\alpha^2$ we solve PDE
$$
	y' = 2y^2-2\alpha^2, \ y(0) = x_0.
$$
It is a special Riccati equation (see \cite{PZ2} or \cite[Section 1 point 19]{E}) which is solved by
$$
	y_{x_0}(t) = \alpha\Big[1 + \frac{2(x_0-\alpha)e^{4\alpha t}}{2\alpha +(x_0-\alpha)(1-e^{4\alpha t})}\Big].
$$
From that we obtain
$$
	x_0(t,\lambda) = \alpha\Big[1+\frac{2(\lambda-\alpha}{2\alpha e^{4\alpha t}-(\lambda-\alpha)(1-e^{4\alpha t})}\Big].
$$
Observe that
$$
	\frac{\partial x_0(t,\lambda)}{\partial\lambda} = \frac{4\alpha^2 e^{4\alpha t}}{(2\alpha e^{4\alpha t}-(\lambda-\alpha)(1-e^{4\alpha t}))^2} > 0.
$$
The assertion follows.
\end{proof}
 In Part II Section 8 Point 1.9.7 \cite{BS} it is given a formula that enables to obtain the vector Laplace transform of $(\hat{X}_t, \int_0^t\hat{X}_udu)$ for $\hat{X}$ being a squared radial Ornstein-Uhlenbeck process. However, the formula presented there is complicated and includes modified Bessel function. Below, we present  a simpler close formula for this  Laplace transform.
\begin{proposition}
Let be $\hat{X}$ a squared radial Ornstein-Uhlenbeck process given by  (\ref{ROU}). Then, for $\lambda\geq 0, \gamma> 0$,
\begin{align*}
\mathbb{E}e^{-\lambda \hat{X}_t-\gamma\int_0^t\hat{X}_udu} = e^{-x\phi(t,\lambda)-\psi(t,\lambda)},
\end{align*}
where
\begin{align*}
	\phi(t,\lambda) &= a + \frac{(\lambda-a)(2a +\alpha)}{e^{2t(\alpha+2a)}(a+\alpha+\lambda)+a-\lambda}, \quad a = \frac{\sqrt{\alpha^2+2\gamma}-\alpha}{2},\\
	\psi(t,\lambda) &= \delta a t - \frac{\delta}{2}\ln \Big|1+ \frac{(\lambda-a)(1-e^{2t(\alpha+2a)})}{e^{2t(\alpha+2a)}(a+\lambda+\alpha)+a-\lambda}\Big|.
\end{align*}
\end{proposition}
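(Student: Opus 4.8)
The plan is to derive this from Proposition~\ref{Lapvec}. The first step is to identify the squared radial Ornstein--Uhlenbeck process \eqref{ROU} as a generalized squared Bessel process II \eqref{GBPII} with the constant coefficients $\Delta(u)\equiv\delta\ge 0$ and $2\theta(u)\equiv-2\alpha$, i.e. $\theta\equiv-\alpha$; since $\alpha\ge 0$ we have $\theta:[0,\infty)\to(-\infty,0]$, so all standing hypotheses of Proposition~\ref{Lapvec} are met and the Laplace transform $p(t,\lambda)=\mathbb{E}e^{-\lambda\hat X_t-\gamma\int_0^t\hat X_u\,du}$ is the unique solution of \eqref{vectorLap} by Lemma~\ref{unique}. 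The Riccati equation appearing in Proposition~\ref{Lapvec} then becomes the autonomous equation $y'(t)=2y^2(t)+2\alpha y(t)-\gamma$, $y(0)=x_0$, exactly the situation of Proposition~\ref{GDLP} but with the linear term $2\alpha y$ now present.

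Second, I would solve this Riccati equation explicitly. Its right-hand side factors as $2(y-a)(y-b)$, where $a=\tfrac12(\sqrt{\alpha^2+2\gamma}-\alpha)$ and $b=-a-\alpha=\tfrac12(-\alpha-\sqrt{\alpha^2+2\gamma})$ are the two fixed points, real and distinct since $\gamma>0$; note $a-b=\alpha+2a=\sqrt{\alpha^2+2\gamma}>0$ and $a>0$. Separation of variables and partial fractions give $\dfrac{y_{x_0}(t)-a}{y_{x_0}(t)-b}=\dfrac{x_0-a}{x_0-b}\,e^{2(a-b)t}$, and solving $y_{x_0}(t)=\lambda$ for $x_0$ (a Möbius inversion) yields
\[
x_0(t,\lambda)=a+\frac{(a-b)(\lambda-a)}{(\lambda-b)e^{2(a-b)t}-(\lambda-a)};
\]
substituting $b=-(a+\alpha)$, $a-b=2a+\alpha$ and $e^{2(a-b)t}=e^{2t(\alpha+2a)}$ rewrites this as the stated $\phi(t,\lambda)$. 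I would then check the remaining hypotheses of Proposition~\ref{Lapvec}: for $\lambda\ge 0$, $t\ge 0$ the denominator $e^{2t(\alpha+2a)}(a+\lambda+\alpha)+a-\lambda$ is $\ge (a+\lambda+\alpha)+a-\lambda=2a+\alpha>0$ (using $e^{2t(\alpha+2a)}\ge 1$ and $a+\lambda+\alpha>0$), so $(t,\lambda)\mapsto x_0(t,\lambda)$ is $\mathcal{C}^1$; a direct differentiation gives $\partial_\lambda x_0(t,\lambda)$ equal to $(a-b)^2e^{2t(\alpha+2a)}$ over the square of that denominator, hence positive; the map $x_0\mapsto y_{x_0}(t)$ is a Möbius transformation, so $\mathcal{C}^1$; and the solution $y_{x_0}(\cdot)$ with $x_0=\phi(t,\lambda)$ is finite on $[0,t]$ by construction.

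Third, I would compute $\psi(t,\lambda)=\delta\int_0^t y_{x_0}(u)\,du\big|_{x_0=\phi(t,\lambda)}$. From $y'_{x_0}=2(y_{x_0}-a)(y_{x_0}-b)$ one has $\dfrac{d}{du}\ln|y_{x_0}(u)-b|=2\big(y_{x_0}(u)-a\big)$, hence
\[
\int_0^t y_{x_0}(u)\,du=at+\tfrac12\ln\Big|\tfrac{y_{x_0}(t)-b}{x_0-b}\Big|.
\]
Evaluating at $x_0=\phi(t,\lambda)$, where $y_{\phi(t,\lambda)}(t)=\lambda$, and simplifying the ratio using the explicit form of $\phi$ — the key algebraic identity being $\dfrac{\phi(t,\lambda)-b}{\lambda-b}=1+\dfrac{(\lambda-a)(1-e^{2t(\alpha+2a)})}{e^{2t(\alpha+2a)}(a+\lambda+\alpha)+a-\lambda}$ — gives $\psi(t,\lambda)=\delta a t-\tfrac{\delta}{2}\ln\big|\cdots\big|$ in the stated form. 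Proposition~\ref{Lapvec} then yields $\mathbb{E}e^{-\lambda\hat X_t-\gamma\int_0^t\hat X_u\,du}=e^{-x\phi(t,\lambda)-\psi(t,\lambda)}$.

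The only real obstacle is bookkeeping: verifying the non-degeneracy hypotheses of Proposition~\ref{Lapvec} (positivity of the denominator and $\partial_\lambda x_0\neq 0$) and, above all, the algebraic manipulations that bring the Möbius expression for $x_0$ and the logarithmic integral into the compact closed forms for $\phi$ and $\psi$. None of this is conceptually hard, but the simplification leading to $\psi$ is the most error-prone part.
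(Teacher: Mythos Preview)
Your proposal is correct and follows essentially the same approach as the paper: identify $\hat X$ as a generalized squared Bessel process II with $\theta\equiv-\alpha$, apply Proposition~\ref{Lapvec}, solve the autonomous Riccati equation $y'=2y^2+2\alpha y-\gamma$, invert to get $\phi(t,\lambda)=x_0(t,\lambda)$, check $\partial_\lambda x_0>0$, and integrate to obtain $\psi$. Your use of the factorisation $y'=2(y-a)(y-b)$ together with the logarithmic-derivative identity $\tfrac{d}{du}\ln|y_{x_0}(u)-b|=2(y_{x_0}(u)-a)$ to evaluate $\int_0^t y_{x_0}(u)\,du$ is a clean way to carry out the integration that the paper only records in final form.
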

\begin{proof}
 The result again follows from Proposition \ref{Lapvec}. We solve PDE
$$
	y' = 2y^2+2\alpha y-\gamma, \ y(0) = x_0,
$$
which is a Riccati equation  (see \cite{PZ2} or \cite[Section 1 point 15]{E}) with $f = 2, g =2\alpha, a = \frac{\sqrt{\alpha^2+2\gamma}-\alpha}{2}$.
We obtain the solution
$$
	y_{x_0}(t) = a + \frac{e^{2t(\alpha+2a)}(x_0-a)(2a+\alpha)}{2a+\alpha + (x_0-a)(1-e^{2t(\alpha+2a)})}.
$$
Thus
$$
	x_0(t,\lambda) = a + \frac{(\lambda-a)(2a +\alpha)}{e^{2t(\alpha+2a)}(a+\alpha+\lambda)+a-\lambda}
$$
and
$$
	\frac{\partial x_0(t,\lambda)}{\partial\lambda}  = \frac{e^{2t(\alpha+2a)}(2a+\alpha)^2}{(e^{2t(\alpha+2a)}(a+\alpha+\lambda)+a-\lambda)^2}>0.
$$
Finally
\begin{align*}
	\psi(t,\lambda) &= \delta\int_0^ty_{x_0}(u)du\Big|_{x_0 = x_0(t,\lambda)}\\
	&= \delta a t - \frac{\delta}{2}\ln \Big|1+ \frac{(\lambda-a)(1-e^{2t(\alpha+2a)})}{e^{2t(\alpha+2a)}(a+\lambda+\alpha)+a-\lambda}\Big|.
\end{align*}
\end{proof}
Now, we present Laplace transform of the vector $(\hat{X}_t, \int_0^t\hat{X}_udu)$, where $\hat{X}$ is a squared Bessel bridge given by (\ref{SBB}).
\begin{proposition}
Let $\hat{X}$ be a squared Bessel bridge. Then for any $\lambda\geq0,\gamma>0$ and $t\in[0,1]$
\begin{equation*}
\mathbb{E} e^{-\lambda \hat{X}_t-\gamma\int_0^t\hat{X}_udu} = e^{-x\phi(t,\lambda) -\psi(t,\lambda)},
\end{equation*}
where
\begin{align*}
\phi(t,\lambda) &= \sqrt{\frac{\gamma}{2}} - \frac12 + \Big(\frac{e^{2\sqrt{2\gamma}t}}{\lambda +\frac{1}{2(1-t)}-\sqrt{\gamma/2}}-\frac{1}{\sqrt{2\gamma}}(1-e^{2\sqrt{2\gamma}t})\Big)^{-1},\\
\psi(t,\lambda) &= \delta\int_0^ty_{x_0}(u)du\Big|_{x_0 = \phi(t,\lambda)},\\
y_{x_0}(t) &= \sqrt{\frac{\gamma}{2}} - \frac12 + e^{2\sqrt{2\gamma}t}\Big(\frac{1}{x_0 +1/2 - \sqrt{\gamma/2}}+\frac{1}{\sqrt{2\gamma}}(1-e^{2\sqrt{2\gamma}t})\Big)^{-1}.
\end{align*}
\end{proposition}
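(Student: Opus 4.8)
The plan is to treat the squared Bessel bridge \eqref{SBB} as a generalized squared Bessel process II with $\Delta\equiv\delta$ and $\theta(t)=-\tfrac{1}{1-t}$, and then to apply Proposition \ref{Lapvec} with this $\theta$, $\Delta$ and the given $\gamma$. The one subtlety at this stage is that $\theta$ is singular at the terminal time $t=1$, so Proposition \ref{Lapvec} and the uniqueness Lemma \ref{unique} are not literally applicable on $[0,\infty)$; this is handled as in Example \ref{SBBB}. On every interval $[0,T]$ with $T<1$ the coefficients $d\equiv\gamma$, $a\equiv-2$, $b(t)=2\theta(t)=-\tfrac{2}{1-t}$ of the PDE \eqref{vectorLap} are continuous, satisfy the sign conditions ($a,b$ non-positive, $a^2+b^2>0$) and $V(t,\lambda)=\delta\lambda\geq0$, so Lemma \ref{unique} gives uniqueness of the solution of \eqref{vectorLap} on $[0,T]\times[0,\infty)$, and $T<1$ being arbitrary the joint Laplace transform $p(t,\lambda)=\mathbb{E}e^{-\lambda\hat{X}_t-\gamma\int_0^t\hat{X}_u\,du}$ is identified on $[0,1)\times[0,\infty)$. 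By Proposition \ref{Lapvec} it then has the form $p(t,\lambda)=e^{-x\phi(t,\lambda)-\psi(t,\lambda)}$ with $\phi(t,\lambda)=x_0(t,\lambda)$ and $\psi(t,\lambda)=\delta\int_0^ty_{x_0}(u)\,du\big|_{x_0=\phi(t,\lambda)}$, where $y_{x_0}$ solves the Riccati equation $y'(t)=2y^2(t)+\tfrac{2y(t)}{1-t}-\gamma$, $y(0)=x_0$, and $x_0(t,\lambda)$ is determined implicitly by $y_{x_0}(t)=\lambda$. Thus everything reduces to integrating this time-inhomogeneous Riccati equation and inverting it in the initial value.

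The key step is to remove the singular linear coefficient. Setting $y(t)=w(t)-\tfrac{1}{2(1-t)}$, the $\tfrac{2y}{1-t}$ and $\tfrac{1}{(1-t)^2}$ contributions cancel and the equation becomes the constant-coefficient special Riccati equation $w'(t)=2w^2(t)-\gamma$, $w(0)=x_0+\tfrac12$, i.e. $w'=2w^2-2\alpha^2$ with $\alpha=\sqrt{\gamma/2}$. This is exactly the equation integrated in the proof of Proposition \ref{GDLP}, now with initial value $x_0+\tfrac12$, so its closed form is available; shifting back by $-\tfrac{1}{2(1-t)}$ yields the closed form of $y_{x_0}$. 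Solving $y_{x_0}(t)=\lambda$ for $x_0$ is then routine algebra and produces $\phi(t,\lambda)=x_0(t,\lambda)$; one also records that $\partial x_0/\partial\lambda$ is a manifestly positive rational expression, and that the denominators appearing in $y_{x_0}$, in $x_0(t,\lambda)$ and in the integrand of $\psi$ do not vanish for $(t,\lambda)\in[0,1)\times[0,\infty)$, so that $x_0\mapsto y_{x_0}(t)$ and $(t,\lambda)\mapsto x_0(t,\lambda)$ are $\mathcal{C}^1$ and all hypotheses of Proposition \ref{Lapvec} hold. Substituting $\phi$ and $\psi$ into $p=e^{-x\phi-\psi}$ finishes the proof.

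I expect the principal difficulty to be spotting the substitution $y=w-\tfrac{1}{2(1-t)}$: recognizing that this particular shift linearizes away the $\tfrac{1}{1-t}$ coefficient and collapses the bridge's Riccati equation onto the constant-coefficient one already handled in Proposition \ref{GDLP} is the one non-mechanical move, after which the remaining work (closed form of $w$, inversion to get $\phi$, the integral defining $\psi$) is bookkeeping. The secondary and more tedious point is the care demanded by the singularity of $\theta$ at the terminal time: one must run Proposition \ref{Lapvec} and Lemma \ref{unique} on the subintervals $[0,T]$, $T<1$, and verify that $\phi$, $\psi$ and all intervening denominators stay finite and of constant sign on $[0,1)$ — routine, but precisely the verifications that license the appeal to Proposition \ref{Lapvec}. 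A space--time change reducing the bridge to an ordinary squared Bessel process is also available, but it is less convenient here, since it turns $\int_0^t\hat{X}_u\,du$ into an integral against a non-constant weight and so does not fit Proposition \ref{GDLP} directly.
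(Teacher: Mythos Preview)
Your proposal is correct and follows essentially the same route as the paper: apply Proposition \ref{Lapvec} with $\theta(t)=-1/(1-t)$ and $\Delta\equiv\delta$, reduce the resulting Riccati equation to a constant-coefficient one, solve it, and invert in the initial datum. The only cosmetic difference is that the paper first reverses time ($s=1-t$) and then shifts by $1/(2s)$ to reach $v'=-2v^2+\gamma$, whereas you shift directly by $-1/(2(1-t))$ to obtain $w'=2w^2-\gamma$; your additional care with the singularity at $t=1$ is welcome, and your claim that $\partial x_0/\partial\lambda>0$ is in fact correct (the paper records the opposite sign, though only nonvanishing is required).
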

\begin{proof} We use Proposition \ref{Lapvec}. Let $t\in[0,1)$. We solve PDE
\begin{equation} \label{j330}
	y' = 2y^2+\frac{2y}{1-t}-\gamma, \ y(0) = x_0.
\end{equation}
Let $u(t) = y(1-t)$ and $s = 1-t$. \eqref{j330} is equivalent to
$$
	u' = -2u^2+\frac{2u}{s}+\gamma.
$$
Using substitution  $v(s) = u(s) + \frac{1}{2s}$ we transform the above PDE to
$$
	v' = -2v^2 + \gamma,
$$
which is a Riccati equation (see \cite{PZ2} or \cite[Section 1 point 15]{E}) with $f = -2, g =0, a = \sqrt{\gamma/2}$.
The solution has the form
$$
	v(s) = \sqrt{\gamma/2} + e^{-2\sqrt{2\gamma}s}\Big(C - e^{-2\sqrt{2\gamma}s}/\sqrt{2\gamma}\Big)^{-1},
$$
where $C$ is a constant. Hence
$$
	y_{x_0}(t) = \sqrt{\frac{\gamma}{2}} - \frac12 + e^{2\sqrt{2\gamma}t}\Big(\frac{1}{x_0 +1/2 - \sqrt{\gamma/2}}+\frac{1}{\sqrt{2\gamma}}(1-e^{2\sqrt{2\gamma}t})\Big)^{-1}.
$$
This gives
$$
		x_0(t,\lambda) = \sqrt{\frac{\gamma}{2}} - \frac12 + \Big(\frac{e^{2\sqrt{2\gamma}t}}{\lambda +\frac{1}{2(1-t)}-\sqrt{\gamma/2}}-\frac{1}{\sqrt{2\gamma}}(1-e^{2\sqrt{2\gamma}t})\Big)^{-1}
$$
and
$$
	\frac{\partial x_0(t,\lambda)}{\partial\lambda}  = \frac{-e^{2\sqrt{2\gamma}t}}{\Big(\lambda +\frac{1}{2(1-t)}-\sqrt{\gamma/2}\Big)^2\Big(\frac{e^{2\sqrt{2\gamma}t}}{\lambda +\frac{1}{2(1-t)}-\sqrt{\gamma/2}}-\frac{1}{\sqrt{2\gamma}}(1-e^{2\sqrt{2\gamma}t})\Big)^2} < 0.
$$
Finally
$$
	\psi(t,\lambda) = \delta\int_0^ty_{x_0}(u)du\Big|_{x_0 = \phi(t,\lambda)}.
$$
\end{proof}
 Let $\gamma$ tend to $0$ in the last three propositions. Then we obtain in limit the Laplace transforms presented in Section 2 (see Proposition \ref{SROU}, Corollary \ref{GBPP} and Example \ref{SBBB}).

\section{Matching diffusion for a general function}
In this section we continue the idea of changing role of coordinates
for some well chosen  function $f$. We consider two diffusions $\hat{X}, X$ with state spaces $\hat{E}\subseteq\mathbb{R},E\subseteq\mathbb{R}$  respectively (usually as a state space we take the real line, the half-line or interval). By $A, \hat{A}$ we denote two differential operators coinciding on $\mathcal{C}_K^2$
 with the generators of diffusions $X$ and $\hat{X}$ respectively.
We start from theorem which will be crucial in proving  later results, among others a Feynman-Kac representation for hyperbolic (Theorem \ref{FKhyp}).

\begin{theorem}\label{metagenthe} Let $\hat{X}$ be a diffusion with generator $\hat{A}$, $\hat{X_0}=x$. Assume that $f: E\times\hat{E} \rightarrow \mathbb{R}$ is a $\mathcal{C}^2$ function such that for any $t\geq0$
\begin{equation} \label{con111}
	\mathbb{E}f(\lambda, \hat{X}_t) = f(\lambda,x) + \int_0^t\mathbb{E}\hat{A}f(\lambda,\hat{X}_u )du.
\end{equation}
Let $X$ be a diffusion given by \eqref{tjX}.
Let $V:[0,\infty)\times E\rightarrow[0,\infty)$ be a continuous function and $h(\lambda) = f(\lambda,x)$. Assume that, for these functions, the pair $(\sigma,\mu)$ is in $\mathcal{FK}$-class. Let $p = p(t,\lambda) := \mathbb{E}f(\lambda, \hat{X}_t)$. If
\begin{equation} \label{con222}
	\mathbb{E}\hat{A}f(\lambda,\hat{X}_u) = A p(u,\lambda) - V p(u,\lambda),
\end{equation}
then
\begin{equation}
	\mathbb{E}f(\lambda, \hat{X}_t) = \mathbb{E}f(X_t,x)e^{-\int_0^tV(t-u,X_u)du}.
\end{equation}
\end{theorem}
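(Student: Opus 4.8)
The plan is to run the same scheme as in the proof of Theorem \ref{tw:Lap}, with hypothesis \eqref{con111} now playing the role that It\^o's lemma together with the martingale property \eqref{martprop} played there. Put $p(t,\lambda) := \mathbb{E}f(\lambda,\hat{X}_t)$, exactly as in the statement. Assumption \eqref{con111} is a Dynkin-type formula for $f(\lambda,\cdot)$ along $\hat{X}$: for every $\lambda$ and all $s\le t$ it gives
\[
p(t,\lambda)-p(s,\lambda) = \int_s^t \mathbb{E}\hat{A}f(\lambda,\hat{X}_u)\,du ,
\]
so the delicate step of showing that the local martingale coming out of It\^o's formula is a true martingale is built directly into the hypothesis and need not be re-done here. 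Also $p(0,\lambda)=\mathbb{E}f(\lambda,\hat{X}_0)=f(\lambda,x)=h(\lambda)$ since $\hat{X}_0=x$.

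Next I would substitute \eqref{con222} into this identity. By \eqref{con222}, $\mathbb{E}\hat{A}f(\lambda,\hat{X}_u) = A p(u,\lambda) - V(u,\lambda)p(u,\lambda)$, so the integral equation becomes
\[
p(t,\lambda)-p(s,\lambda) = \int_s^t\bigl(A p(u,\lambda)-V(u,\lambda)p(u,\lambda)\bigr)\,du .
\]
Granting the regularity of $p$ in the space variable $\lambda$ — enough for $Ap$ to be meaningful and for $u\mapsto Ap(u,\lambda)-V(u,\lambda)p(u,\lambda)$ to be continuous — one differentiates in $t$ and concludes that $p\in\mathcal{C}^{1,2}$ solves the Cauchy problem
\[
\frac{\partial p}{\partial t} = A_t p - Vp , \qquad p(0,\lambda) = f(\lambda,x) = h(\lambda),
\]
where $A_t$ is the (possibly time-dependent) generator of $X$, coinciding with $A$ on $\mathcal{C}_K^2$.

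Finally, since $(\sigma,\mu)$ is in the $\mathcal{FK}$-class for the data $V$, $h(\lambda)=f(\lambda,x)$ (and $g\equiv 0$), this Cauchy problem has a unique classical solution, which admits the probabilistic representation $p(t,\lambda) = \mathbb{E}h(X_t)e^{-\int_0^tV(t-u,X_u)du}$, where $X$ starts from $\lambda$ as in \eqref{tjX}. Replacing $h(X_t)$ by $f(X_t,x)$ and recalling $p(t,\lambda)=\mathbb{E}f(\lambda,\hat{X}_t)$ gives the asserted identity. I expect the only genuine obstacle to be justifying the smoothness of $p=p(t,\lambda)$ in $\lambda$ (twice differentiability, membership in the domain of $A$, and joint continuity so the PDE reading above is legitimate): in the concrete applications $f$ is chosen so that one may differentiate under the expectation, while in the general statement this regularity is tacitly presupposed by the very form of \eqref{con222}, and once it is available the remainder of the argument is a direct transcription of the definition of the $\mathcal{FK}$-class.
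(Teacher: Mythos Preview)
Your proposal is correct and follows essentially the same route as the paper: use \eqref{con111} to get the integral identity for $p$, plug in \eqref{con222} to obtain the Cauchy problem $\partial_t p = Ap - Vp$ with $p(0,\lambda)=f(\lambda,x)$, and invoke the $\mathcal{FK}$-class assumption for the representation. The paper's proof is terser and does not spell out the regularity caveat you raise; your expanded version is a faithful elaboration of the same argument.
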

\begin{proof}
 We have $p(0,\lambda) = f(\lambda, x)$, as $\hat{X}_0 = x$. Assumptions (\ref{con222}) and (\ref{con111})
  imply that $p$ satisfies
\begin{align}
	\frac{\partial p}{\partial t} = A p - Vp, \quad p(0,\lambda) = f(\lambda, x).
\end{align}
Assumption that  $(\sigma,\mu)$ is in $\mathcal{FK}$-class
finishes the proof.
\end{proof}
Now we introduce a class analogue to conditions $\mathcal{FK}$.
\begin{definition}(Hypothesis HF) For a given diffusion $\hat{X}$ with coefficients $(\hat{\sigma},\hat{\mu})$ we say that $f\in\mathcal{C}^2$ satisfies the hypothesis HF if for all $t\geq 0, \lambda \geq 0$
\begin{enumerate}
\item[(i)] $ \mathbb{E}f(\lambda, \hat{X}_t) <\infty$,
\item[(ii)] $ \mathbb{E}\int_0^t\Big(\frac{\partial f}{\partial x}(\lambda,\hat{X}_u)\hat{\sigma}(u,\hat{X}_u)\Big)^2du < \infty,$
\item[(iii)] $\mathbb{E}\int_0^t\Big| \frac{\hat{\sigma}^2(u,\hat{X}_u)}{2}\frac{\partial^2 f}{\partial x^2}(\lambda,\hat{X}_u )+ \hat{\mu}(u,\hat{X}_u)\frac{\partial f}{\partial x}(\lambda,\hat{X}_u ) \Big| du<\infty,$
\item[(iv)] $ \frac{\partial^i}{\partial \lambda^i}\mathbb{E}f(\lambda,\hat{X}_t ) = \mathbb{E}\frac{\partial^i f}{\partial \lambda^i}(\lambda,\hat{X}_t ) < \infty, \ i =1,2.$
\end{enumerate}
\end{definition}

\begin{theorem}\label{genthe} Let $\hat{X}, X$ be diffusions  given  by \eqref{tjX-hat} and \eqref{tjX}, respectively.
 Assume that $f: E\times\hat{E} \rightarrow \mathbb{R}$ satisfies the hypothesis HF for $\hat{X}$. 
Fix $x\in\hat{E}$. Let $V:[0,\infty)\times E\rightarrow[0,\infty)$ be a continuous function and $h(\lambda) = f(\lambda,x)$.  Assume that, for these functions, the pair $(\sigma,\mu)$ is in $\mathcal{FK}$-class.
 If, for any $t\geq 0$, $\lambda\in E$
\begin{align} \label{gencon}
\mathbb{E}&\Big[ \frac{\hat{\sigma}^2(t,\hat{X}_t)}{2}\frac{\partial^2 f}{\partial x^2}(\lambda,\hat{X}_t )+ \hat{\mu}(t,\hat{X}_t)\frac{\partial f}{\partial x}(\lambda,\hat{X}_t ) \Big]\\
	&= \mathbb{E}\Big[\frac{\sigma^2(t,\lambda)}{2}\frac{\partial^2 f}{\partial \lambda^2}(\lambda,\hat{X}_t )+ \mu(t,\lambda)\frac{\partial f}{\partial \lambda}(\lambda,\hat{X}_t ) - V(t,\lambda)f(\lambda,\hat{X}_t ) \Big],\notag
\end{align}
then
\begin{equation}
	\mathbb{E}f(\lambda, \hat{X}_t) = \mathbb{E}f(X_t,x)e^{-\int_0^tV(t-u,X_u)du}.
\end{equation}
\end{theorem}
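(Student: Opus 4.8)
The plan is to reduce everything to Theorem \ref{metagenthe} by checking that its two hypotheses, \eqref{con111} and \eqref{con222}, are consequences of hypothesis HF together with the assumption \eqref{gencon}. Set $p(t,\lambda) := \mathbb{E}f(\lambda,\hat{X}_t)$; since $\hat{X}_0 = x$ we have $p(0,\lambda) = f(\lambda,x) = h(\lambda)$, so the initial condition already has the required form.

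First I would establish \eqref{con111}. Fix $\lambda\in E$ and apply It\^o's lemma to $u\mapsto f(\lambda,\hat{X}_u)$, where $\hat{X}$ solves \eqref{tjX-hat} and differentiation acts only on the second argument:
\begin{align*}
f(\lambda,\hat{X}_t) &= f(\lambda,x) + \int_0^t\frac{\partial f}{\partial x}(\lambda,\hat{X}_u)\hat{\sigma}(u,\hat{X}_u)\,dB_u \\
&\quad + \int_0^t\Big(\frac{\hat{\sigma}^2(u,\hat{X}_u)}{2}\frac{\partial^2 f}{\partial x^2}(\lambda,\hat{X}_u) + \hat{\mu}(u,\hat{X}_u)\frac{\partial f}{\partial x}(\lambda,\hat{X}_u)\Big)\,du.
\end{align*}
By part (ii) of hypothesis HF the stochastic integral is a genuine martingale and hence has zero mean; by parts (i) and (iii) the remaining terms are integrable and Fubini applies. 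Taking expectations yields exactly \eqref{con111}, with $\hat{A}$ the (possibly time-dependent) generator of $\hat{X}$ acting on the second variable.

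Next I would verify \eqref{con222}. The left-hand side of \eqref{gencon} is precisely $\mathbb{E}\hat{A}f(\lambda,\hat{X}_t)$. In the right-hand side, part (iv) of hypothesis HF lets me pull $\partial_\lambda$ and $\partial_\lambda^2$ through the expectation, so that right-hand side equals
\begin{align*}
&\frac{\sigma^2(t,\lambda)}{2}\frac{\partial^2 p}{\partial\lambda^2}(t,\lambda) + \mu(t,\lambda)\frac{\partial p}{\partial\lambda}(t,\lambda) - V(t,\lambda)p(t,\lambda) \\
&\qquad = Ap(t,\lambda) - Vp(t,\lambda),
\end{align*}
where $A$ is the generator of $X$ from \eqref{tjX}. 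Thus \eqref{gencon} is exactly \eqref{con222}. With \eqref{con111} and \eqref{con222} in hand, and since by assumption $(\sigma,\mu)$ is in $\mathcal{FK}$-class for the given $V$ and for $h(\lambda)=f(\lambda,x)$, Theorem \ref{metagenthe} applies verbatim and gives $\mathbb{E}f(\lambda,\hat{X}_t) = \mathbb{E}f(X_t,x)e^{-\int_0^tV(t-u,X_u)\,du}$, which is the assertion.

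The step requiring the most care is the passage from the integral identity \eqref{con111} to the Cauchy problem $\partial_t p = Ap - Vp$ with $p(0,\cdot)=h$: this needs $p\in\mathcal{C}^{1,2}$ and the continuity in $u$ of $u\mapsto\mathbb{E}\hat{A}f(\lambda,\hat{X}_u)$, so that the integral equation may be differentiated in $t$. Regularity in $\lambda$ is supplied by part (iv) of hypothesis HF; for the regularity in $t$ and the interchange of limit and integral one argues as in the proof of Theorem \ref{metagenthe}, ultimately invoking the uniqueness built into the definition of the $\mathcal{FK}$-class to identify $p$ with the classical solution that carries the probabilistic representation. Everything else is routine.
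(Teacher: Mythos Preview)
Your proof is correct and follows essentially the same approach as the paper: derive the Dynkin-type identity \eqref{con111} from It\^o's lemma and parts (i)--(iii) of hypothesis HF, use part (iv) together with \eqref{gencon} to obtain \eqref{con222}, and then conclude via the $\mathcal{FK}$-class assumption. The only cosmetic difference is that you invoke Theorem \ref{metagenthe} explicitly, whereas the paper re-derives its conclusion inline; the logical content is identical, and your more detailed accounting of which part of HF is used at each step is a welcome clarification.
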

\begin{proof} Let $\hat{A}$ be a differential operator coinciding on $\mathcal{C}_K^2$ with generator of $\hat{X}$ and $A$ be such an operator for $X$. Then, from the hypothesis HF,
\begin{align}\label{con11}
	\mathbb{E}f(\lambda, \hat{X}_t) = f(\lambda,x) + \int_0^t\mathbb{E}\hat{A}f(\lambda,\hat{X}_u )du.	
\end{align}
Let us recall that $x$ is fixed, as $\hat{X}_0 = x$. Define $p = p(t,\lambda) := \mathbb{E}f(\lambda, \hat{X}_t)$. We have $p(0,\lambda) = f(\lambda, x)$.
From the hypothesis HF and assumption
(\ref{gencon}) we have
\begin{align} \label{con22}
	\mathbb{E}\hat{A}f(\lambda,\hat{X}_u) = A p(u,\lambda) - V p(u,\lambda).
\end{align}
In result from (\ref{con11}) and (\ref{con22})
\begin{align}
	\frac{\partial p}{\partial t} = A p - Vp, \quad p(0,\lambda) = f(\lambda, x).
\end{align}
Assumption that  $(\sigma,\mu)$ is in $\mathcal{FK}$-class
finishes the proof.
\end{proof}
We can replace condition (\ref{gencon}) by a sufficient one.
\begin{remark}\label{loccor} If a function  $f$
satisfies the following PDE
\begin{align*}
	\frac{\hat{\sigma}^2(t,x)}{2}\frac{\partial^2 f}{\partial x^2} + \hat{\mu}(t,x) \frac{\partial f}{\partial x} = \frac{\sigma^2(t,\lambda)}{2}\frac{\partial^2 f}{\partial \lambda^2} + \mu(t,\lambda) \frac{\partial f}{\partial \lambda} - V(t,\lambda)f,
\end{align*}
then \eqref{gencon} holds.
\end{remark}

\begin{proposition} \label{prop4.3}
Let $f$ be a solution of the following Cauchy problem
\begin{align} \label{cauchy}
	\frac12 \frac{\partial^2 f}{\partial \lambda^2} = \frac{\hat{\sigma}^2(t,x)}{2}\frac{\partial^2 f}{\partial x^2} + \hat{\mu}(t,x) \frac{\partial f}{\partial x}, \quad f(0,x) = h(x),
\end{align}
for some $\mathcal{C}^2$ function $h$.
Let $\hat{X}$ be a diffusion given by \eqref{tjX-hat}. Assume that the coefficients $(\hat{\sigma},\hat{\mu})$ are in $\mathcal{FK}$-class and $f$ satisfies the hypothesis HF.
Then, for any $t>0$,
\begin{align*}
	\mathbb{E}h(\hat{X}_t) =  \int_{-\infty}^{\infty}f(z,x)\frac{1}{\sqrt{2\pi t}}e^{-\frac{z^2}{2t}}dz.
\end{align*}
\end{proposition}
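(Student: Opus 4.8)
The plan is to apply Theorem \ref{genthe} with the matching diffusion $X$ chosen to be a one-dimensional Brownian motion and then to specialise the resulting identity at the value $\lambda=0$.

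Take $\sigma\equiv 1$, $\mu\equiv 0$ and $V\equiv 0$; then the operator $A$ is $\tfrac12\frac{\partial^2}{\partial\lambda^2}$ and, by \eqref{tjX}, $X_t=\lambda+W_t$ for a Brownian motion $W$. Three things must be checked in order to invoke Theorem \ref{genthe}. First, $f$ satisfies hypothesis HF for $\hat X$ --- this is assumed. Second, the pair $(\sigma,\mu)=(1,0)$ must belong to $\mathcal{FK}$-class for the datum $\lambda\mapsto f(\lambda,x)$ (with $x=\hat X_0$ fixed), $V\equiv 0$, $g\equiv 0$; this is the classical assertion that the Cauchy problem $\partial_t p=\tfrac12 p_{\lambda\lambda}$, $p(0,\cdot)=f(\cdot,x)$ has the unique classical solution $p(t,\lambda)=\mathbb{E}f(\lambda+W_t,x)$, which holds by the Feynman--Kac / heat-equation theorem (cf. Remark \ref{FKcons}) provided $f(\cdot,x)$ has at most exponential-of-$\lambda^2$ growth, so that the Gaussian integral converges and uniqueness holds in that class. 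Third, condition \eqref{gencon} must hold; but with the above choices its pointwise sufficient form from Remark \ref{loccor} reads $\frac{\hat\sigma^2(t,x)}{2}f_{xx}+\hat\mu(t,x)f_x=\tfrac12 f_{\lambda\lambda}$, which is precisely the PDE \eqref{cauchy} that $f$ is assumed to solve (reading the coefficients as time-homogeneous, as the statement of \eqref{cauchy} implicitly requires).

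Granting these, Theorem \ref{genthe} gives, for every admissible $\lambda$, $\mathbb{E}f(\lambda,\hat X_t)=\mathbb{E}f(X_t,x)\,e^{-\int_0^tV(t-u,X_u)\,du}=\mathbb{E}f(\lambda+W_t,x)$. Evaluating at $\lambda=0$ and using the initial condition $f(0,\cdot)=h(\cdot)$ on the left-hand side yields $\mathbb{E}h(\hat X_t)=\mathbb{E}f(W_t,x)$, and since $W_t\sim\mathcal{N}(0,t)$ the right-hand side equals $\int_{-\infty}^{\infty}f(z,x)\frac{1}{\sqrt{2\pi t}}e^{-z^2/(2t)}\,dz$, which is the claim. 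Equivalently --- and this is where the hypothesis $(\hat\sigma,\hat\mu)\in\mathcal{FK}$-class enters --- one may argue directly: by It\^o's formula and hypothesis HF the function $(s,\lambda)\mapsto\mathbb{E}f(\lambda,\hat X_s)$ solves the heat equation in $(s,\lambda)$ with datum $f(\cdot,x)$, hence equals $\mathbb{E}f(\lambda+W_s,x)$, while $(\hat\sigma,\hat\mu)\in\mathcal{FK}$-class guarantees $\mathbb{E}h(\hat X_t)$ is the unique classical solution of $\partial_t u=\hat Au$, $u(0,\cdot)=h$, pinning everything down.

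The verification of \eqref{gencon} is immediate (it is the assumed PDE), and the substitution $\lambda=0$ is elementary once one notes $0\in E$ and that HF holds there. The only step needing genuine care is the $\mathcal{FK}$-class membership of the trivial pair $(1,0)$: one must ensure $f(\cdot,x)$ satisfies a growth bound guaranteeing both convergence of the Gaussian integral and uniqueness for the heat equation --- a mild condition that is either implicit in the hypotheses on $f$ or can be imposed separately.
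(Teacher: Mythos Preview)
Your proof is correct and follows essentially the same route as the paper: choose the matching diffusion to be Brownian motion ($\sigma\equiv1$, $\mu\equiv0$, $V\equiv0$), verify \eqref{gencon} via Remark \ref{loccor} using the assumed PDE \eqref{cauchy}, apply Theorem \ref{genthe} to obtain $\mathbb{E}f(\lambda,\hat X_t)=\mathbb{E}f(\lambda+W_t,x)$, and then set $\lambda=0$. Your additional care about the $\mathcal{FK}$-class membership of $(1,0)$ and the role of the hypothesis $(\hat\sigma,\hat\mu)\in\mathcal{FK}$-class goes beyond what the paper's proof makes explicit, but the core argument is the same.
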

\begin{proof}
Let $E = \mathbb{R}$, $X_t = \lambda + B_t$ and $V\equiv 0$. \eqref{gencon}  follows from Remark \ref{loccor} and \eqref{cauchy}, so we can use Theorem \ref{genthe} and obtain
\begin{align*}
	\mathbb{E}f(\lambda, \hat{X}_t) = \mathbb{E}f(\lambda + B_t,x) = \int_{-\infty}^{\infty}f(\lambda + z,x)\frac{1}{\sqrt{2\pi t}}e^{-\frac{z^2}{2t}}dz.
\end{align*}
Letting $\lambda$ go to $0$ we obtain, for every $t> 0$,
\begin{align*}
	\mathbb{E}h(\hat{X}_t) =  \int_{-\infty}^{\infty}f(z,x)\frac{1}{\sqrt{2\pi t}}e^{-\frac{z^2}{2t}}dz.
\end{align*}
\end{proof}
\begin{example} We use the last proposition to compute the expectation $\mathbb{E}H(\hat{X}_t)$ for $\hat{X}$ defined by (\ref{baff}) with $a =b=0, x>0$ and a function $H = H(x), x> 0$ being a solution of
$$
	H''(x) -\frac{\alpha^2}{x^2+2x}H(x) = 0
$$
for $\alpha>0$. Define $f(\lambda,x) = e^{\alpha\lambda}H(x)$. It is a separable particular solution of the generalized Tricomi equation
$$
	\frac{\partial^2 f}{\partial \lambda^2} = (x^2+2x)\frac{\partial^2 f}{\partial x^2}, \quad f(0,x) = H(x)
$$
(see \cite[Section 4.2 point 3]{E}). If $f$ satisfies the hypothesis HF, then from Proposition \ref{prop4.3} we obtain
$$
	\mathbb{E}H(\hat{X}_t) =  H(x)\int_{-\infty}^{\infty}e^{\alpha z}\frac{1}{\sqrt{2\pi t}}e^{-\frac{z^2}{2t}}dz = H(x)e^{\frac{\alpha^2 t}{2}}.
$$
Notice that the set of functions solving the above Tricomi equation and satisfying the hypothesis HF is nonempty - for $\alpha = \sqrt{2}$ one can easily check that $f(\lambda,x) = e^{\sqrt{2}\lambda}(x^2+2x)$ is such a solution.
\end{example}
Now we present a deep and interesting consequence of Theorem \ref{metagenthe}. We deduce a version of probabilistic representation for solution of a Cauchy problem for hyperbolic PDE. The classical Feynman-Kac representation works for the parabolic equations. The matching diffusion technique gives a new perspective.
\begin{theorem}(Feynman-Kac representation for hyperbolic PDE) \label{FKhyp}
Let $x\in\hat{E}= [0,\infty)$ and $\lambda\in E\subseteq\mathbb{R}$. Let $V:[0,\infty)\times E \rightarrow [0,\infty)$ be a continuous function and $h:E\rightarrow\mathbb{R}$ be in $\mathcal{C}^2$.
Suppose that $f  \in \mathcal{C}^2$
is a bounded  solution of the Cauchy problem
\begin{align*}
	\frac12 \frac{\partial^2 f}{\partial x^2} &= \frac{\sigma^2(t,\lambda)}{2}\frac{\partial^2 f}{\partial \lambda^2} + \mu(t,\lambda) \frac{\partial f}{\partial \lambda} - V(t,\lambda)f, \\ 								 f(\lambda,0) &= h(\lambda), \quad \frac{\partial f}{\partial x}(\lambda, 0+) = 0.
\end{align*}
Let $X$ be a diffusion  given by \eqref{tjX} and  $\hat{X}_t = |B_t|$.
If $f$ satisfies the hypothesis HF for $\hat{X}$
 and for a given $V$, $h$ the pair  $(\sigma,\mu)$ is in $\mathcal{FK}$-class, then
\begin{equation}\label{hh}
	f(\lambda,v) = 2\sqrt{(v\gamma)/\pi}\mathcal{L}_{\gamma,\sqrt{v}}^{-1}\Big(\mathbb{E}(h(X_{\frac{1}{2\gamma}})
e^{-\int_0^{\frac{1}{2\gamma}}V(\frac{1}{2\gamma}-u,X_u)du})\Big),
\end{equation}
where  $\mathcal{L}_{\gamma,v}^{-1}$ stands for the inverse Laplace transform with respect to coordinate $v$, i.e.
$\mathcal{L}_{\gamma,v}^{-1}(g(\gamma))$ is the function  $G(\gamma,v)$ such that, for $\gamma>0$,
$$
	\int_0^{\infty}e^{-\gamma v}G(\gamma,v)dv = g(\gamma).
$$
\end{theorem}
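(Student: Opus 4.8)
The plan is to apply Theorem~\ref{metagenthe} with the matched pair $\hat X_t=|B_t|$ and $X$ the diffusion \eqref{tjX}, and then to read off $f$ by inverting a Laplace transform after the time change $t=\tfrac{1}{2\gamma}$. First I would verify that $\hat X_t=|B_t|$ is an admissible auxiliary diffusion. Reflecting Brownian motion has generator $\hat A=\tfrac12\partial_{xx}$ acting on $\mathcal C^2$ functions on $[0,\infty)$ that satisfy the Neumann condition $g'(0+)=0$; the Cauchy datum $\partial_x f(\lambda,0+)=0$ is precisely this condition in the $x$-variable, so It\^o--Tanaka applied to $f(\lambda,|B_t|)$ produces no local-time term, and boundedness of $f$ together with the content of hypothesis HF makes the resulting stochastic integral a genuine martingale. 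Taking expectations gives
\begin{equation*}
	\mathbb E f(\lambda,|B_t|)=f(\lambda,0)+\int_0^t\mathbb E\,\hat A f(\lambda,|B_u|)\,du,
\end{equation*}
which is condition \eqref{con111}; moreover $x=\hat X_0=|B_0|=0$, so the initial datum required by Theorem~\ref{metagenthe} is $h(\lambda)=f(\lambda,x)=f(\lambda,0)$, in agreement with the present statement.

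Next I would check the matching condition \eqref{con222}. Writing $p(u,\lambda):=\mathbb E f(\lambda,|B_u|)$ and using hypothesis HF to differentiate under the expectation,
\begin{equation*}
	A\,p(u,\lambda)-V(u,\lambda)p(u,\lambda)=\mathbb E\Big[\tfrac{\sigma^2(u,\lambda)}{2}\partial_{\lambda\lambda}f(\lambda,|B_u|)+\mu(u,\lambda)\partial_\lambda f(\lambda,|B_u|)-V(u,\lambda)f(\lambda,|B_u|)\Big],
\end{equation*}
and the Cauchy-problem PDE (read at the time level $u$) identifies the bracket with $\tfrac12\partial_{xx}f(\lambda,|B_u|)=\hat A f(\lambda,|B_u|)$, which is exactly \eqref{con222}. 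Since $(\sigma,\mu)$ is in $\mathcal{FK}$-class for the given $V$ and $h$, Theorem~\ref{metagenthe} now yields
\begin{equation*}
	\mathbb E f(\lambda,|B_t|)=\mathbb E\big[f(X_t,x)\,e^{-\int_0^tV(t-u,X_u)du}\big]=\mathbb E\big[h(X_t)\,e^{-\int_0^tV(t-u,X_u)du}\big],
\end{equation*}
with $X_0=\lambda$, the last equality because $x=0$ and $f(\cdot,0)=h$.

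Finally I would perform the time change and the inversion. Setting $t=\tfrac{1}{2\gamma}$, the right-hand side above becomes exactly $\Psi(\gamma,\lambda):=\mathbb E\big(h(X_{1/(2\gamma)})e^{-\int_0^{1/(2\gamma)}V(1/(2\gamma)-u,X_u)du}\big)$, the quantity inside $\mathcal L^{-1}$ in \eqref{hh}. On the left-hand side, $|B_{1/(2\gamma)}|$ has the half-Gaussian density $z\mapsto 2\sqrt{\gamma/\pi}\,e^{-\gamma z^2}$ on $[0,\infty)$, so with the substitution $z=\sqrt{v}$,
\begin{equation*}
	\Psi(\gamma,\lambda)=2\sqrt{\gamma/\pi}\int_0^\infty f(\lambda,z)e^{-\gamma z^2}\,dz=\sqrt{\gamma/\pi}\int_0^\infty \frac{f(\lambda,\sqrt{v})}{\sqrt{v}}\,e^{-\gamma v}\,dv.
\end{equation*}
Hence $\sqrt{\pi/\gamma}\,\Psi(\gamma,\lambda)$ is the Laplace transform in the variable $v$ of $v\mapsto v^{-1/2}f(\lambda,\sqrt{v})$; since a bounded continuous function is uniquely determined by its Laplace transform, inverting this relation and rearranging for $f$ produces \eqref{hh}, the square root in the subscript $\mathcal L^{-1}_{\gamma,\sqrt{v}}$ being a bookkeeping trace of the substitution $z=\sqrt{v}$.

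The main obstacle lies entirely in the first two steps: one has to be careful that the Neumann datum, combined with the integrability and differentiability built into hypothesis HF and with the boundedness of $f$, really does annihilate the local-time term and make the relevant stochastic integral a true martingale, so that \eqref{con111} holds; and one has to confirm that the composite hypotheses of Theorem~\ref{metagenthe} --- $(\sigma,\mu)\in\mathcal{FK}$ for the datum $h=f(\cdot,0)$, hypothesis HF for the reflecting process $|B|$, and their compatibility with $f$ being a bounded $\mathcal C^2$ solution of the hyperbolic Cauchy problem --- are mutually consistent. Once these are in place, the time change and the Laplace inversion are routine.
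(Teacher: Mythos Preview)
Your proposal is correct and follows essentially the same route as the paper: the paper too invokes the Neumann condition $\partial_x f(\lambda,0+)=0$ to place $f(\lambda,\cdot)$ in the domain of reflecting Brownian motion, applies Remark~\ref{loccor} together with Theorem~\ref{metagenthe} to obtain $\mathbb E f(\lambda,|B_t|)=\mathbb E h(X_t)e^{-\int_0^tV(t-u,X_u)du}$, and then performs the substitutions $\gamma=\tfrac{1}{2t}$, $v=z^2$ to read off the Laplace transform and invert. Your write-up simply spells out in more detail the verification of \eqref{con111} and \eqref{con222} that the paper compresses into a citation of Remark~\ref{loccor} and hypothesis~HF.
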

\begin{proof}
Assumptions $f\in\mathcal{C}^2_b$ and $\frac{\partial f}{\partial x}(\lambda, 0+) = 0$ guarantee that $f$ with respect to $x$-coordinate belongs to the domain of reflected Brownian motion (see \cite[Appendix 1 p. 2]{BS}). We use Remark \ref{loccor} and Theorem \ref{metagenthe} for $f,\hat{X}$ and $X$.
We obtain
\begin{align*}
	\mathbb{E}f(\lambda, |B_t|) = \mathbb{E}f(X_t,0)e^{-\int_0^tV(t-u,X_u)du} = \mathbb{E}h(X_t)e^{-\int_0^tV(t-u,X_u)du}.
\end{align*}
In result, for a fixed $t>0$,
\begin{align*}
	2\int_{0}^{\infty}f(\lambda,z)\frac{1}{\sqrt{2\pi t}}e^{-\frac{z^2}{2t}}dz = \mathbb{E}h(X_t)e^{-\int_0^tV(t-u,X_u)du}.
\end{align*}
The assertion follows after substitution $\gamma = \frac{1}{2t}$, $v = z^2$ and some simple algebra.
\end{proof}
\begin{remark} Let us recall,
following \cite{KS} or \cite{F}, a set of standard assumptions for the pair $(\sigma,\mu)$, from Theorem \ref{FKhyp}, to be in $\mathcal{FK}$-class.
Given $f\in\mathcal{C}^{1,2}$ and $h\in\mathcal{C}^2$, both the functions of polynomial growth, it is enough that $\sigma,\mu$ are continuous, satisfy the linear growth conditions, and  $V$ is nonnegative and continuous. Conversely, the sufficient conditions under which the Cauchy problem from the definition of $\mathcal{FK}$-class has a solution $f$ satisfying the exponential growth condition are the following: $\sigma,\mu$ are bounded and uniformly H$\ddot{o}$lder-continuous, $V$ is nonnegative and  continuous or bounded and uniformly H$\ddot{o}$lder-continuous. The function $h$ is of polynomial growth. Assumptions on boundedness and  H$\ddot{o}$lder continuity of  $\sigma,\mu$ and $V$ can be relaxed by making them local requirements (see p.366-368 in \cite{KS} and p.139-150 in \cite{F}).
Assumption $\frac{\partial f}{\partial x}(\lambda, 0+) = 0$ is not necessary. It allows to choose as the process  $\hat{X}$ a reflected Brownian motion instead of a standard one and obtain a representation (\ref{hh}) by an inverse Laplace transform.
For a diffusion given by \eqref{tjX-hat} and satisfying (\ref{martprop}), the sufficient conditions ensuring that point (iv) of the hypothesis HF holds is that $f$ is bounded and in $\mathcal{C}^2$.
Points (ii) and (iii) should be  verified.
The solution of Cauchy problem can be relaxed to $f$ being of polynomial growth but then the conditions of the hypothesis HF should  be verified separately.
\end{remark}

\begin{example} \label{ex4.7}
We illustrate an efficiency of Theorem \ref{FKhyp} by finding a bounded and  satisfying the hypothesis HF solution of the following hyperbolic PDE:
\begin{align*}
	\frac12 \frac{\partial^2 f}{\partial x^2} &= \frac{\lambda^2}{2}\frac{\partial^2 f}{\partial \lambda^2} + \frac{\lambda}{2} \frac{\partial f}{\partial \lambda} - \frac{\lambda^2}{2}f, \\ 								 f(\lambda,0) &= e^{-\lambda}, \quad \frac{\partial f}{\partial x}(\lambda, 0) = 0, \quad \lambda>0, \quad x\geq 0.
\end{align*}
We take $\hat{E}=[0,\infty)$, $E = (0,\infty)$ and $X_t = \lambda e^{B_t}$. The assumptions of Theorem \ref{FKhyp} are satisfied
and we can write, for any $\lambda>0$,
\begin{align*}
	\mathbb{E}f(\lambda, |B_t|) = \mathbb{E}h(X_t)e^{-\int_0^tV(t-u,X_u)du}
	= \mathbb{E}e^{-\lambda e^{B_t} - \frac{\lambda^2}{2}\int_0^te^{2B_u}du}
 = \mathbb{E}e^{-\lambda\cosh(B_t)},
\end{align*}
where in the last equality we use Corollary \ref{HBP}. As the function $\cosh$ is an even function, we conclude that $f(\lambda,x) = e^{-\lambda\cosh(x)}$. Observe that $$\frac{\partial f}{\partial x}(\lambda, 0) = -\lambda\sinh(x)e^{-\lambda\cosh(x)}\Big|_{x=0} = 0.$$
So, $f$ is a desired solution.  One can easily verify that that $f$ satisfies  the hypothesis HF for $|B|$.
\end{example}

\begin{proposition}
Let $E= [0,\infty)$ and $\hat{X}$ be a diffusion given by \eqref{tjX-hat}. Suppose that for the diffusion $\hat{X}$ a function $f$ satisfies the hypothesis HF, $V\equiv 0$ and $f$ solves the PDE
\begin{equation} \label{j400}
	\frac{\partial f}{\partial\lambda} = \frac{\hat{\sigma}^2(t,x)}{2}\frac{\partial^2 f}{\partial x^2} + \hat{\mu}(t,x) \frac{\partial f}{\partial x}.
\end{equation}
Assume that  $(\hat{\sigma},\hat{\mu})$ is  in $\mathcal{FK}$-class. Then
\begin{equation*}
	\mathbb{E}f(\lambda,\hat{X}_t) = \mathbb{E}h(\hat{X}_{t+\lambda}),
\end{equation*}
where $h(x) = f(0,x)$.
\end{proposition}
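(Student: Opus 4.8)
The plan is to invoke Theorem~\ref{genthe} with the degenerate matching diffusion carrying pure unit drift. Take $E=[0,\infty)$, $\sigma\equiv 0$, $\mu\equiv 1$ and $V\equiv 0$, so that the SDE \eqref{tjX} reduces to $X_t=\lambda+t$ and the generator of $X$ is $A=\partial/\partial\lambda$. With these choices the sufficient condition in Remark~\ref{loccor} reads exactly
$$
\frac{\hat\sigma^2(t,x)}{2}\frac{\partial^2 f}{\partial x^2}+\hat\mu(t,x)\frac{\partial f}{\partial x}=\frac{\partial f}{\partial\lambda},
$$
which is precisely the hypothesis \eqref{j400}; hence \eqref{gencon} holds. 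What remains to be checked before applying Theorem~\ref{genthe} is that $(0,1)$ belongs to $\mathcal{FK}$-class for $V\equiv 0$ and $h(\lambda)=f(\lambda,x)$: the equation $dX_u=du$ has the obvious solution $X_u=x+u$, unique in law, and the associated Cauchy problem $\partial_t p=\partial_\lambda p$, $p(0,\lambda)=h(\lambda)$ is a transport equation whose unique $\mathcal{C}^{1,2}$ solution on the quadrant is $p(t,\lambda)=h(\lambda+t)=\mathbb{E}h(X_t)$, uniqueness following by the method of characteristics since the lines $\{\lambda+t=\mathrm{const}\}$ through an interior point meet $\{t=0\}$ inside the quadrant. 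Since $f$ satisfies hypothesis HF for $\hat X$ by assumption, all hypotheses of Theorem~\ref{genthe} are in place.

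Theorem~\ref{genthe} then yields, for all $\lambda\ge 0$, $t\ge 0$,
$$
\mathbb{E}f(\lambda,\hat X_t)=\mathbb{E}\,f(X_t,x)\,e^{-\int_0^tV(t-u,X_u)\,du}=f(\lambda+t,x).
$$
Putting $\lambda=0$ gives $\mathbb{E}h(\hat X_s)=\mathbb{E}f(0,\hat X_s)=f(s,x)$ for every $s\ge 0$; feeding this back with $s=t+\lambda$ produces $\mathbb{E}f(\lambda,\hat X_t)=f(t+\lambda,x)=\mathbb{E}h(\hat X_{t+\lambda})$, which is the assertion.

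A second, more conceptual route is available, and it is the one that actually uses the standing assumption $(\hat\sigma,\hat\mu)\in\mathcal{FK}$-class: since $f$ solves \eqref{j400} with $f(0,\cdot)=h$, uniqueness in the $\mathcal{FK}$ Cauchy problem for $\hat X$ identifies $f(\lambda,y)$ with $\mathbb{E}_y h(\hat X_\lambda)=(\hat P_\lambda h)(y)$, where $\hat P$ denotes the (possibly two-parameter) transition semigroup of $\hat X$; the Markov/semigroup property then gives $\mathbb{E}f(\lambda,\hat X_t)=\mathbb{E}_x(\hat P_\lambda h)(\hat X_t)=(\hat P_{t+\lambda}h)(x)=\mathbb{E}h(\hat X_{t+\lambda})$ at once.

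The only genuinely delicate point in either argument is the well-posedness underlying the $\mathcal{FK}$-class requirement: for the first route, uniqueness for the transport equation on the quadrant $[0,\infty)^2$ in the relevant class; for the second, uniqueness for the Cauchy problem for $\hat A$ with data $h$ in a class containing $f$. Both are standard once one records the regularity and growth of $f$ and $h$ supplied by hypothesis HF, and everything else is a direct substitution into Theorem~\ref{genthe} (or into the semigroup identity).
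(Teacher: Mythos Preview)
Your proposal is correct, and your first route is essentially the paper's proof: match $\hat X$ with the deterministic drift $X_t=\lambda+t$ (i.e.\ $\sigma\equiv 0$, $\mu\equiv 1$, $V\equiv 0$) and invoke Theorem~\ref{genthe} via Remark~\ref{loccor} to obtain $\mathbb{E}f(\lambda,\hat X_t)=f(\lambda+t,x)$, then identify $f(s,x)$ with $\mathbb{E}h(\hat X_s)$. The one substantive difference is how that last identification is made. The paper appeals directly to the standing hypothesis that $(\hat\sigma,\hat\mu)$ is in $\mathcal{FK}$-class, which gives the Feynman--Kac representation $f(\lambda,x)=\mathbb{E}h(\hat X_\lambda)$ from \eqref{j400}; you instead recover it by evaluating the matching identity at $\lambda=0$. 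Your observation is sharper: it shows that the assumption $(\hat\sigma,\hat\mu)\in\mathcal{FK}$ is in fact not needed for this route, which is why you (correctly) felt compelled to offer a second argument that actually consumes it. That second route---identify $f(\lambda,\cdot)=\hat P_\lambda h$ via $\mathcal{FK}$ and conclude by the Markov property---is a clean alternative that bypasses Theorem~\ref{genthe} entirely. One trivial slip: in your verification that $(0,1)\in\mathcal{FK}$ you wrote $X_u=x+u$ where you meant $X_u=\lambda+u$.
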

\begin{proof} Fix $t\geq 0$. From assumption that $(\hat{\sigma},\hat{\mu})$ is  in $\mathcal{FK}$-class
with $\lambda$ taking the role of time we obtain
\begin{equation} \label{classic}
	f(\lambda,x) = \mathbb{E}h(\hat{X}_{\lambda}).
\end{equation}
Matching the diffusion, so taking $\mu\equiv 1$ and $\sigma\equiv 0$ and using \eqref{j400} we see that \eqref{gencon} holds.
 So, by Theorem  \ref{genthe} we have
\begin{align*}
	\mathbb{E}f(\lambda, \hat{X}_t) = f(\lambda +t,x),
\end{align*}
and
(\ref{classic}) finishes the proof.
\end{proof}

\begin{example}\label{hl}
Let $\hat{X}, X$ be diffusions given by \eqref{tjX-hat} and \eqref{tjX}. Assume,  additionally, that  $|\hat{\sigma}| >0, |\sigma|>0$ and $\hat{\mu} = -\hat{\sigma}^2/2$, $\mu = \sigma^2/2$.
 Let functions $h$ on $\hat{E}$ and $l$ on $E$ be defined by formulae:
\begin{align*}
	h(x) &= e^x\int_a^xe^{-w}\int_b^w\hat{\sigma}^{-2}(t,z)dzdw, \\
	l(\lambda) &= e^{\lambda}\int_c^{\lambda}e^{-s}\int_d^s\sigma^{-2}(t,u)duds
\end{align*}
for some arbitrarily chosen points $a,b,c,d$ and fixed $t\geq 0$. If $f(x,\lambda)= h(x) + l(\lambda)$ satisfies the assumptions
of the hypothesis HF for $\hat{X}$, then
\begin{equation*}
	\mathbb{E}h(\hat{X}_t) = h(x) - l(\lambda) + \mathbb{E}l(X_t).
\end{equation*}
Indeed, observe that $f$ solves
\begin{align*}
	\frac{\hat{\sigma}^2(t,x)}{2}\frac{\partial^2 f}{\partial x^2} + \hat{\mu}(t,x) \frac{\partial f}{\partial x} = \frac{\sigma^2(t,\lambda)}{2}\frac{\partial^2 f}{\partial \lambda^2} + \mu(t,\lambda) \frac{\partial f}{\partial \lambda},
\end{align*}
as  $h$ is a solution of
\begin{align*}
	y'' = y' + 2\hat{\sigma}^{-2},
\end{align*}
and $l$ is a solution of
\begin{align*}
	y'' = y' + 2\sigma^{-2}
\end{align*}
(see \cite{PZ}). The assertion follows from Theorem \ref{genthe} for $V\equiv 0$.
\end{example}
Next we present an example  illustrating the method for finding Laplace transform of a solution of another hyperbolic PDE of the second order - a solution of wave equation with axial symmetry. We would need the following lemma.
\begin{lemma} \label{FKBES} Let $X$ be a Bessel process of nonnegative dimension starting from $\lambda\geq 0$ and let $A$ be its generator. Suppose that for a  nonnegative bounded or uniformly H$\ddot{o}$lder-continuous function $V$ and a function $f$ of polynomial growth there is an unique solution of polynomial growth of the Cauchy problem
$$
	\frac{\partial w}{\partial t} = Aw - Vw, \quad w(0,\lambda) = f(\lambda).
$$
Then the pair $(\sigma, \mu)$, where $\sigma \equiv 1, \mu(x) = \frac{\delta -1}{2x}$,  is in $\mathcal{FK}$-class.
\end{lemma}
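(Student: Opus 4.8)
The plan is to unwind the definition of $\mathcal{FK}$-class under the standing assumption $g\equiv 0$: the claim is equivalent to two assertions. First, the SDE $dX_u = dW_u + \frac{\delta-1}{2X_u}\,du$ with $X_0=\lambda$ has, for every $\lambda\ge 0$, a weak solution unique in law; second, the (by hypothesis unique) classical $\mathcal{C}^{1,2}$ solution $w$ of polynomial growth of $\partial_t w = Aw - Vw$, $w(0,\cdot)=f$, coincides with the Feynman-Kac functional $\mathbb{E}\big[f(X_t)\exp(-\int_0^t V(t-u,X_u)\,du)\big]$. The first assertion I would simply import from the theory of Bessel processes: the displayed SDE is the defining equation of the $\delta$-dimensional Bessel process, which exists and is unique in law from any starting point of $[0,\infty)$ (with the usual boundary convention at $0$ when $\delta<2$); see Revuz and Yor \cite[Chapter~XI]{RY}. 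So the real content is the second assertion, and since the hypothesis already grants uniqueness of $w$, the task reduces to verifying the probabilistic representation for that $w$.

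For this I would run the classical Feynman-Kac argument in localized form. Fix $t>0$ and, for $s\in[0,t)$, set $M_s := w(t-s,X_s)\exp\big(-\int_0^s V(t-u,X_u)\,du\big)$. By Itô's formula together with the PDE satisfied by $w$, the finite-variation part of $M$ cancels, so $M$ is a continuous local martingale on $[0,t)$. The next step is to upgrade $M$ to a true martingale: since $V\ge 0$ the exponential weight is bounded by $1$, and since $w$ is of polynomial growth it suffices to control $\mathbb{E}\sup_{s\le t}|X_s|^p$ for all $p$ and $\mathbb{E}\int_0^t\big(\partial_x w(t-s,X_s)\big)^2\,ds$, both finite because Bessel processes have moments of every order, bounded uniformly on compact time intervals. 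Taking expectations and letting $s\uparrow t$ (with continuity of $w$ and uniform integrability supplied by the same bounds) yields $w(t,\lambda)=\mathbb{E}\big[f(X_t)\exp(-\int_0^t V(t-u,X_u)\,du)\big]$, which is exactly the representation required in the definition of $\mathcal{FK}$-class. This is the localized version of the argument in Karatzas and Shreve \cite[\S5.7]{KS}, adapted to the fact that the Bessel drift $\frac{\delta-1}{2x}$ is smooth on $(0,\infty)$ but not globally Lipschitz; compare Remark~\ref{FKcons} and \cite[p.~139--150]{F}.

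The main obstacle is precisely this local-martingale-to-martingale passage, i.e. the integrability bookkeeping near the boundary point $0$: one needs the polynomial moment bounds for $X$ together with control of $\partial_x w$ as $x\downarrow 0$. The genuinely delicate case is small dimension, where the drift blows up: for $\delta>1$ one has $\int_0^t X_u^{-1}\,du<\infty$ almost surely and the computation above is literal, whereas for $\delta\le 1$ the drift integral must be read as a principal value via the local time of $X$ at $0$, so a short separate remark (or the restriction $\delta\ge 1$, which is what the intended applications to the wave equation with axial symmetry require) is needed there.
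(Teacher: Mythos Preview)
Your approach is essentially the one the paper takes: reduce the $\mathcal{FK}$-class verification to the Karatzas--Shreve argument, with the only nonstandard point being the moment control of the Bessel process. The paper makes this reduction explicit by quoting condition \eqref{sup0} from the proof of Proposition~\ref{proposition2.14} (so that the proof of \cite[Theorem~7.6, \S5]{KS} can be mimicked verbatim), and then proves the bound concretely by dominating $X_u^2$ by $\sum_{i=1}^{M-1}(B_u^{(i)})^2 + (\lambda + B_u^{(M)})^2$ for independent Brownian motions $B^{(i)}$ and a suitable integer $M\ge\delta$, which immediately gives $\mathbb{E}\sup_{u\in[s,t]}X_u^{2m}\le C(1+\lambda^{2m})e^{C(t-s)}$. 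You assert this moment bound rather than prove it; the paper's one-line domination argument is worth knowing. Conversely, your remarks on the drift singularity at $0$ for $\delta\le 1$ are more careful than the paper, which does not comment on this point at all.
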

\begin{proof}
We use the observation from the proof of Proposition 2.14 that it is enough to prove \eqref{sup0}, i.e. that
for all $s\leq t\leq T$, $m\geq 1$
\begin{equation}\label{sup}
	\mathbb{E}\sup_{u\in[s,t]}X_u^{2m} \leq C (1+\lambda^{2m})e^{C(t-s)},
\end{equation}
where $C = C(m,T)$ is a constant and $X_0 = \lambda$.
Let us observe that there exists $M\geq 1$ such that $X_u^2 \leq \sum_{i=1}^{M-1}(B_u^{(i)})^2 + (\lambda + B_u^{(M)})^2$, where $B^{(i)}$ are independent standard Brownian motions. In result there exists a constant $C = C(m,M)$ such that
\begin{align*}
	\mathbb{E}\sup_{u\in[s,t]}X_u^{2m} &\leq  C \Big( \lambda^{2m} + \mathbb{E}\sup_{u\in[s,t]}B_u^{2m}\Big)
	\leq C(\lambda^{2m} + (t-s)^m)\\
	 &\leq C^* (\lambda^{2m}+1)e^{C(t-s)},
\end{align*}
where $C^* = C^*(m,M,T)$.
\end{proof}
\begin{example}(Wave equation with axial symmetry) Consider, for $w = w(\lambda, x)$, the following PDE
\begin{align}\label{WE}
	\frac{\partial^2 w}{\partial x^2} &= a^2\Big(\frac{\partial^2 w}{\partial\lambda^2}+\frac{1}{\lambda}\frac{\partial w}{\partial \lambda}\Big), \
		\\
		 w(\lambda,0) &= f(\lambda), \quad \frac{\partial w}{\partial x}(\lambda,0) = 0,\notag
\end{align}
where $x\geq 0, \lambda > 0$, $a>0$ and $w,f$ are in $\mathcal{C}^2$.
 The form of the Green function for problem (\ref{WE}) is known. 
  However,
it is in the form of complicated infinite series depending on zeros of an appropriate Bessel function - for details see \cite{P}. Using matching diffusion technique, under the assumptions of hypothesis HF, we 
can obtain quickly the Laplace transform of a
 solution of (\ref{WE}). \\
If a bounded solution of (\ref{WE}) satisfies
$$
\frac{\partial w}{\partial x}(\lambda,0) = 0, \quad w^+(0+,x) = 0,
$$
where $w^+$ denotes the right derivative with respect to the scale function (see for instance \cite{BS} Section 21 in Appendix 1), then the first condition  guarantees that  the function $w(\lambda, \cdot)$ belongs to the domain of reflected Brownian motion and the second that $w(\cdot, x)$ belongs to the domain of Bessel process.
We choose $\hat{E} = [0,\infty), E = (0,\infty)$ and $\hat{\sigma}\equiv \sqrt{2}, \hat{\mu}\equiv 0, \sigma\equiv a\sqrt{2}, \mu(\lambda) = \frac{a^2}{\lambda}$.
 Observe that $\theta_t := X_{\frac{t}{2a^2}}$ is a Bessel process with index $0$ and $\hat{X}_t = |\sqrt{2}B_t + x|$, where $X_0 = x$.  Having in mind Lemma \ref{FKBES} and assuming that for $\hat{X}$ a function $w$ satisfies the hypothesis HF  we can use now Theorem \ref{genthe} to obtain
\begin{equation}\label{sol}
	\mathbb{E}w(\lambda, |\sqrt{2}B_t + x|) = \mathbb{E}w(\theta_{2a^2t},x), \quad \theta_0 = \lambda,
\end{equation}
$x\geq 0, \lambda > 0, t\geq 0$. To ease the notation put $2a^2 = 1$. If $x = 0$, then from (\ref{sol}) we have
\begin{align*}
 \mathbb{E}w(\lambda, |\sqrt{2}B_t|) = \mathbb{E}w(\theta_{t},0) = \mathbb{E}f(\theta_{t}).
\end{align*}
Hence, using the densities of $|B_t|$ and $\theta_t$, we obtain
\begin{align}\label{trsol}
	\int_0^{\infty}w(\lambda, \sqrt{2}z)\frac{\sqrt{2}}{\sqrt{\pi t}}e^{-\frac{z^2}{2t}}dz = \int_0^{\infty}f(z)z\frac{1}{t}\exp{\Big(-\frac{\lambda^2+z^2}{2t}\Big)}I_0(\lambda z/t)dz,
\end{align}
where $I_0$ is modified Bessel function.
This allows to give, using elementary algebra, the Laplace transform of $w(\lambda, \cdot)$
\begin{align*}
	\int_0^{\infty}e^{-\frac{z^2}{4t}}w(\lambda,z)dz = \sqrt{\pi} \int_0^{\infty}f(z)z\frac{1}{\sqrt{t}}\exp{\Big(-\frac{\lambda^2+z^2}{2t}\Big)}I_0(\lambda z/t)dz.
\end{align*}
\end{example}

At the end of the paper, we present some new results for a Jacobi diffusion that can be obtained using the matching diffusions technique. The formulae  are very simple in comparison with very complicated formulae obtained for the similar functionals
of Jacobi diffusion by Hurd and Kuznetsov \cite{HK}. Let us recall that a Jacobi diffusion takes values in $[0,1]$ and is given by
\begin{align*}
	dX_t = \sqrt{X_t(1-X_t)}dB_t + (b-aX_t)dt,\quad X_0 = \lambda\in [0,1],
\end{align*}
where $a,b\in\mathbb{R}$. The distribution of the process is complicated and only few closed formulae for functionals of Jacobi diffusion has been obtained so far (see \cite{HK} and \cite{BSII}).
\begin{proposition} Let $X$ be a Jacobi diffusion. If $t\geq 0$, $b\geq \frac12$, $a\geq b+\frac12$, $\alpha \geq 1 + 2b$ and $\gamma \geq (2(b-a)+1)^+$, then
\begin{align}
\label{up} \mathbb{E}X_t^{\alpha}e^{-\alpha(\frac{\alpha-1}{2}-b)\int_0^tX_u^{-1}du}&= \lambda^{\alpha}e^{\alpha(a-\alpha+1)t/2},\\
\label{down}\mathbb{E}(1-X_t)^{\gamma}e^{-\gamma(\frac{\gamma-1}{2}-b+a)\int_0^tX_u(1-X_u)^{-1}du}&= (1-\lambda)^{\gamma}e^{-\gamma tb}.
\end{align}
Moreover, (\ref{down}) holds also for $b<1/2$, $a>2$ and $\gamma \geq (2(b-a)+1)^+$.
\end{proposition}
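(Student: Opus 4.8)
The plan is to obtain both formulas from the Feynman--Kac machinery that underlies the matching diffusion technique (Theorem~\ref{tw:Lap} and the $\mathcal{FK}$-class formalism of Section~2), applied with a \emph{degenerate} matched diffusion. Concretely, writing $\mathcal{A}=\tfrac12 x(1-x)\partial_{xx}+(b-ax)\partial_x$ for the generator of the Jacobi diffusion, it suffices in each case to exhibit a purely spatial function $h$ and a nonnegative potential $V$, built from the functional in the exponent, such that $\mathcal{A}h-Vh=\kappa h$ for a constant $\kappa$; then $p(t,x):=h(x)e^{\kappa t}$ solves $\partial_t p=\mathcal{A}p-Vp$ with $p(0,\cdot)=h$, and the matching-diffusion conclusion reads $\mathbb{E}[h(X_t)e^{-\int_0^tV(X_u)\,du}]=h(\lambda)e^{\kappa t}$, which is exactly the desired identity.

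For \eqref{up} I would take $h(x)=x^{\alpha}$ and $V(x)=\alpha\bigl(\tfrac{\alpha-1}{2}-b\bigr)x^{-1}$, the potential corresponding to the displayed exponential. Expanding $\mathcal{A}h$ produces a linear combination of $x^{\alpha-1}$ and $x^{\alpha}$, and the coefficient in $V$ is chosen precisely so that $Vh$ absorbs the $x^{\alpha-1}$ contribution, leaving $\mathcal{A}h-Vh$ equal to a scalar multiple of $h$, the scalar being the exponential rate on the right-hand side. By It\^o's formula the process $M_t:=X_t^{\alpha}\exp\bigl(-\int_0^tV(X_u)\,du-\kappa t\bigr)$ is then a nonnegative local martingale, and \eqref{up} is exactly the statement $\mathbb{E}M_t=M_0=\lambda^{\alpha}$. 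For \eqref{down} the same scheme runs with $h(x)=(1-x)^{\gamma}$ and $V(x)=\gamma\bigl(\tfrac{\gamma-1}{2}-b+a\bigr)\,x(1-x)^{-1}$: writing $x=1-(1-x)$ one checks that $\mathcal{A}h$ is a combination of $(1-x)^{\gamma-1}$ and $(1-x)^{\gamma}$, that $Vh$ is precisely the $(1-x)^{\gamma-1}$ term, and that what remains is $-\gamma b\,h$, which is the rate in the displayed exponent. The final clause---that \eqref{down} persists for $b<1/2$, $a>2$---is then automatic once the martingale property is in hand, since in \eqref{down} the only delicate boundary is $x=1$: there $h=(1-x)^{\gamma}$ vanishes and $V$ blows up, whereas $h$ and $V$ are smooth and bounded near $x=0$, so the value of $b$ (which only controls the behaviour of $X$ near $0$) is irrelevant provided the behaviour near $1$, governed by the ``dimension'' $2(a-b)\ge 3$ under $a>2$, $b<1/2$, is good.

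The genuine obstacle is exactly the step just flagged: verifying that the nonnegative local martingale $M$ is uniformly integrable on compact time intervals, equivalently that no mass is lost at the accessible boundary points. This is where the hypotheses $b\ge\tfrac12$, $a\ge b+\tfrac12$, $\alpha\ge 1+2b$, $\gamma\ge(2(b-a)+1)^{+}$ are used: near $0$ and near $1$ the Jacobi diffusion behaves like a squared Bessel process of dimension $2b$, respectively $2(a-b)$, and these inequalities are tailored so that the additive functionals $\int_0^tX_u^{-1}\,du$ and $\int_0^tX_u(1-X_u)^{-1}\,du$ are a.s.\ finite and the exponent ($\alpha$, resp.\ $\gamma$) is small enough against the strength of the potential for the compensated exponential not to leak. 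I would handle this by localizing $M$ at stopping times that approach the boundary, using $h\le 1$ together with the sign of $\kappa$ (resp.\ of $-\gamma b$) to dominate $M$ before passing to the limit, and then letting the stopping times tend to infinity; alternatively one invokes the local-coefficient relaxation of the Feynman--Kac representation recorded in Remark~\ref{FKcons}, noting that the two pairs $(x^{\alpha},\ \alpha(\tfrac{\alpha-1}{2}-b)x^{-1})$ and $((1-x)^{\gamma},\ \gamma(\tfrac{\gamma-1}{2}-b+a)x(1-x)^{-1})$ solve the relevant Cauchy problems, with uniqueness following by an argument in the spirit of Lemma~\ref{unique}. Once the martingale property is secured, evaluating at $t=0$ yields \eqref{up} and \eqref{down}.
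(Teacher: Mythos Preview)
Your approach is essentially the paper's, only phrased in direct Feynman--Kac/eigenfunction language rather than through the matching-diffusion formalism. The paper invokes Theorem~\ref{genthe} (not Theorem~\ref{tw:Lap}) with $f(\lambda,x)=\lambda^{\alpha}x^{\beta}$ and a \emph{deterministic} auxiliary process $\hat{X}_t=xe^{\alpha(a-\alpha+1)t/(2\beta)}$ for \eqref{up} (respectively $\hat{X}_t=xe^{-b\gamma t/(2\beta)}$ for \eqref{down}); since $\hat{X}$ is deterministic, the conclusion $\mathbb{E}f(\lambda,\hat{X}_t)=\mathbb{E}f(X_t,x)e^{-\int_0^tV(X_u)du}$ reduces, after cancelling the dummy factor $x^{\beta}$, to exactly your identity $\mathbb{E}[h(X_t)e^{-\int_0^tV(X_u)du}]=h(\lambda)e^{\kappa t}$ coming from $\mathcal{A}h-Vh=\kappa h$. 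So the ``degenerate matched diffusion'' you anticipated is precisely what the paper does.

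Where the two write-ups differ is emphasis on the technical justification. The paper disposes of it in two lines: the parameter ranges $b\ge\tfrac12$, $a\ge b+\tfrac12$ make both endpoints $0,1$ unattainable (and for the final clause, $b<\tfrac12$, $a>2$ makes $1$ entrance-not-exit), after which ``hypothesis HF is clearly satisfied'' since $X$ lives in $(0,1)$ and $f$, $V$ are smooth there. Your longer discussion of the local-martingale-to-martingale passage, the squared-Bessel boundary comparison, and the role of the constraints $\alpha\ge 1+2b$, $\gamma\ge(2(b-a)+1)^{+}$ (which are exactly the nonnegativity of the two potentials $V$) is a more explicit unpacking of the same point. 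Either way the content is the same; your framing simply bypasses the $x^{\beta}$ bookkeeping and the appeal to Theorem~\ref{genthe}.
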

\begin{proof} If $b\geq \frac12$ and $a\geq b+\frac12$, then both points $0$ and $1$ are unattainable for $X$ (see \cite[Section 4]{HK}). If $b<1/2$, $a>2$, then the point $1$ is the entrance-not-exit point (see  \cite[Section 4]{BSII}). To obtain (\ref{up}) we take
$\hat{X}_t = xe^{\alpha(a-\alpha+1)\frac{t}{2\beta}}$, where $x>0$, $\beta>1$, and use Theorem \ref{genthe} for $\hat{X},X$, $V(\lambda) = \frac{\alpha}{\lambda}(\frac{\alpha-1}{2}-b)$ and $f(\lambda,x) = \lambda^{\alpha}x^{\beta}$, for which the hypothesis HF is clearly satisfied. To obtain (\ref{down}) we take $\hat{X}_t = xe^{-b\alpha\frac{t}{2\beta}}$, $V(\lambda) = \frac{\alpha\lambda}{1-\lambda}(\frac{\alpha-1}{2}+a-b)$ and we use again Theorem \ref{genthe}.

\end{proof}
\bibliographystyle{plain}

\end{document}